\definecolor{dgreen}{rgb}{0.1,0.6,0.}
\def\bZ{\mathbb{Z}}
\def\bR{\mathbb{R}}
\def\cM{\mathcal{M}}
\def\cR{\mathcal{R}}
\def\cF{\mathscr F}
\def\cG{\mathscr G}
\def\cS{\mathcal S}
\def\cI{\mathcal I}
\def\cA{\mathcal A}
\def\cW{\mathscr{W}}
\def\epsilon{\varepsilon}
\def\c{{\rm c}}
\def\s{{\rm s}}
\def\ss{{\rm ss}}
\def\uu{{\rm uu}}
\def\u{{\rm u}}
\def\cs{{\rm cs}}
\def\cu{{\rm cu}}
\def\Sak{{\rm S}}
\newtheorem{thmy}{Theorem}
\newtheorem{theo}{Theorem}[section]
\newtheorem{theorem}[theo]{Theorem}
\newtheorem{corollary}[theo]{Corollary}
\newtheorem{proposition}[theo]{Proposition}
\newtheorem{lemma}[theo]{Lemma}
\newtheorem*{claim}{Claim}
\theoremstyle{definition}
\newtheorem{remark}[theo]{Remark}
\newtheorem*{remark*}{Remark}
\newtheorem{definition}[theo]{Definition}
\newcommand{\eqdef}{\stackrel{\scriptscriptstyle\rm def}{=}}
\DeclareMathOperator{\lAngle}{\langle\hspace{-0.2cm}\langle}
\DeclareMathOperator{\rAngle}{\rangle\hspace{-0.2cm}\rangle}
\DeclareMathOperator{\card}{{\rm card}}
\DeclareMathOperator{\Span}{span}
\title[Geodesic flows modeled by expansive flows]{Geodesic flows modeled by expansive flows: Compact surfaces without conjugate points and continuous Green bundles}
\subjclass[2010]{%
53D25 
37D40, 
37D25, 
37D35, 
28D20, 
28D99
}
\author[K. Gelfert]{Katrin Gelfert}
\address{Instituto de Matem\'atica, Universidade Federal do Rio de Janeiro, Cidade Universit\'aria - Ilha do Fund\~ao, Rio de Janeiro 21945-909,  Brazil}
\email{gelfert@im.ufrj.br}
\author[R.~O.~Ruggiero]{Rafael O.~Ruggiero}
\address{Departamento de Matem\'atica PUC-Rio, Rua Marqu\'es de S\~ao Vicente 225, Rio de Janeiro 22543-900, Brazil}
\email{rorr@mat.puc-rio.br}
\thanks{This research has been supported [in part] by the Coordenação de Aperfeiçoamento de Pessoal de Nível Superior - Brasil (CAPES) - Finance Code 001 and by CNPq-grants. }
\begin{document}

\begin{abstract}
We study the geodesic flow of a compact surface without conjugate points and genus greater than one and continuous Green bundles. Identifying each strip of bi-asymptotic geodesics  induces an equivalence relation on the unit tangent bundle. Its quotient space  is shown to carry the structure of a 3-dimensional compact manifold. This manifold carries a canonically defined continuous  flow which is expansive, time-preserving semi-conjugate to the geodesic flow, and has a local product structure. An essential step towards the proof of these properties is to study  regularity properties of the horospherical foliations and to show that they are indeed tangent to the Green subbundles. As an application it is shown that the geodesic flow has a unique measure of maximal entropy. 
\end{abstract}
\maketitle

\section{Introduction}
The geodesic flow of a compact surface without conjugate points whose genus is greater than one belongs to the most challenging examples of nonuniformly hyperbolic dynamics. From the point of view of topological dynamics, any such flow can be considered ``hyperbolic in the large'' after Morse's work \cite{kn:Morse} which shows that geodesics in the universal covering space, endowed by the pullback of the metric of the surface by the covering map, are ``shadowed'' by \emph{hyperbolic geodesics}, that is, geodesics of the hyperbolic space. To be more precise, a rectifiable curve $c\colon I \to N$, $I$ an interval, of a complete Riemannian manifold $(N,g)$ is a \emph{$A,B$-quasi-geodesic} if for every $t , s \in I$ it holds  $\ell_{g}(c[s,t]) \leq A d_{g}(c(s),c(t)) + B$, where $\ell_g$ denotes curve length and $d_{g}$ the distance relative to the Riemannian metric $g$. Morse shows that if $(N,g)$ is the hyperbolic plane, then there exists $D>0$ such that the curve $c$ is within a distance $D$ from a hyperbolic geodesic.
The term ``shadowing'' is used to somehow draw a connection to the Anosov-shadowing lemma  in hyperbolic dynamics  (see, for instance, \cite[Section 18]{KatHas:95}) which asserts that any $\epsilon$-pseudo-orbit (for $\epsilon$ small enough) is shadowed by some true orbit of the dynamics. In some sense quasi-geodesics play a role analogous to pseudo-orbits of Anosov dynamics and the constant $D$ replaces $\epsilon$ in the Anosov-shadowing lemma. 

Even though, by the above, geodesics behave similar to hyperbolic geodesics, there is a fundamental difference: there might exist infinitely many geodesics in the universal covering of the compact surface shadowed by just a single hyperbolic geodesic. These geodesics form ``strips'' of bi-asymptotic geodesics which have been the object of study of dynamicists working in geometry. One of the most famous results is the so-called ``flat strip theorem'' for surfaces without focal points (see  \cite{kn:EO,kn:Pesin} and discussion in Section \ref{expansivepoints}). 

The similarities between the dynamics of the geodesic flow of a surface without conjugate points and genus greater than one and the geodesic flow of a hyperbolic surface have been inspiration in the fields of dynamical systems theory, geometry, and topology. Among the most studied problems is the existence of conjugacies or semi-conjugacies between these flows, a problem which arises naturally from Morse's work. It was shown in \cite{kn:Ghys,Gro:00} that there exist such semi-conjugacies provided one allows for a reparametrization of the geodesic flow. On the other hand, after the works \cite{kn:Otal,kn:Croke,CroFatFel:92}  on  rigidity of the marked length spectrum it is known that such semi-conjugacies in general cannot be time-preserving. 
It is natural, although somewhat  naive, to ask whether there exists a sort of equivalence relation in the class of orbits of the geodesic flow assigning any strip of geodesics one common equivalence class such that the induced quotient space of the unit tangent bundle  still has some nice metric properties and carries a continuous flow. Without any further hypotheses, presumably the structure of strips is quite complicate. On partial result in this direction is Coud\`{e}ne-Schapira \cite{CouSch:14} stating that in the universal covering of a compact surface without focal points and genus greater than one the only nontrivial strips project under the covering map on cylinders which are completely foliated by closed geodesics. Even though, \emph{a priori} there may be infinitely many strips to ``quotient'' and the quotient space may be quite singular. A general structure may be described by the equivalence relation in Gromov hyperbolic spaces investigated by Gromov \cite[Section 8.3]{kn:Gromov} obtaining a quotient with some very  mild topological structure only. 

Towards this direction, the following is our first main result. We recall the definitions of the corresponding topological concepts in Section \ref{sec:quotient}. 

\begin{thmy} \label{main}
	Let $(M,g)$ be a $C^\infty$ compact connected boundaryless Riemannian surface  without conjugate points of genus greater than one and with continuous stable and unstable Green bundles. 
	Then there exists  a continuous flow of a compact topological 3-manifold which is expansive, topologically mixing, has a local product structure, and is time-preserving semi-conjugate to the geodesic flow of $(M,g)$.
\end{thmy}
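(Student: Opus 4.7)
The plan is to work with the equivalence relation $\sim$ on $T^1M$ suggested in the introduction: declare $v \sim w$ when the geodesics $\gamma_v$ and $\gamma_w$, lifted to the universal cover $\widetilde M$, are bi-asymptotic (equivalently, their lifts determine the same pair of points at infinity). Each equivalence class is a ``strip''. First I would check that $\sim$ is $\phi_t$-invariant and closed, so that the quotient space $X \eqdef T^1M/\!\sim$ is Hausdorff, compact, and carries a canonically induced continuous flow $\psi_t$ for which the projection $\pi\colon T^1M \to X$ satisfies $\pi\circ \phi_t = \psi_t \circ \pi$. This already gives the time-preserving semi-conjugacy for free; the substance of the argument is in establishing the remaining four properties of $\psi_t$.

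The central technical input is the regularity of the horospherical foliations $\cF^{\ss}$ and $\cF^{\uu}$ and their relation to the Green bundles. I would first show, using continuity of the Green subbundles, that the weak horospheres depend continuously on the base point and that the (strong) horospherical leaves are everywhere tangent to the corresponding Green subbundles $E^{\ss}, E^{\uu}$; this is the step flagged as essential in the abstract and is based on the standard Riccati comparison arguments along Jacobi fields together with continuity. Once this is in hand, the strip through $v$ coincides with the intersection of the weak stable and weak unstable horospherical leaves through $v$, and one sees that these weak leaves descend to leaves in $X$ which are transverse to the flow and to each other. This is the geometric mechanism that will produce a local product structure on $X$.

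The main obstacle, I expect, is showing that $X$ is a topological $3$-manifold. For this I would argue locally: around a point $\pi(v) \in X$, use a small flow box for $\phi_t$ together with coordinates adapted to $\cF^{\ss}, \cF^{\uu}$ and the strip direction, then show that $\pi$ restricted to such a flow box is (up to collapsing strips within weak leaves) a quotient map onto a product of two arcs and a flow interval. A Moore-type decomposition argument, exploiting that each strip is a compact connected set with the topology of a flow box cross an interval (this requires genus${}>1$, bi-asymptoticity, and the continuity of Green bundles to exclude pathological strip shapes), should yield that each point of $X$ has a neighborhood homeomorphic to $\bR^3$. With such charts in hand, the projections of the horospherical foliations equip $X$ with a local product structure compatible with $\psi_t$.

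Finally I would derive the remaining dynamical properties from this geometric picture. Expansiveness is almost by construction: if two points of $X$ stay within a small Bowen ball for all time, lifting to $T^1M$ and using continuity of Green bundles together with the definition of strips forces their preimages to be bi-asymptotic, hence in the same class, hence equal in $X$. Topological mixing follows from the density of closed geodesics on $T^1M$ (classical for surfaces of genus ${>}1$ without conjugate points, e.g.\ via Morse's shadowing) together with the local product structure, by the usual Smale-type argument producing transitivity and then mixing from periodic points of incommensurable periods. Since $\psi_t$ is continuous, time-preserving semi-conjugate to $\phi_t$, expansive, topologically mixing, and admits a local product structure on a compact $3$-manifold, the theorem follows.
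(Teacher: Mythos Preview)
Your equivalence relation is too coarse, and this breaks the construction at the outset. If $v\sim w$ whenever the geodesics are bi-asymptotic (equivalently, share both endpoints at infinity), then $v\sim\phi_t(v)$ for every $t$: the geodesics $\gamma_v$ and $\gamma_{\phi_t(v)}$ differ only by a time-shift, so $d_{\tilde g}(\gamma_{\bar v}(s),\gamma_{\phi_t(\bar v)}(s))=|t|$ for all $s$. Hence each equivalence class contains a full orbit, the quotient is at best $2$-dimensional (a space of geodesics), and the induced flow $\psi_t$ is the identity. The relation actually used in the paper is finer: $v\sim w$ iff $w\in\cF^\s(v)$ \emph{and} the lifted geodesics are bi-asymptotic, which forces $w\in\cF^\s(v)\cap\cF^\u(v)=\cI(v)$. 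Each class is then a compact arc transverse to the flow, and the quotient keeps the flow direction; this is what makes the semi-conjugacy time-preserving and the quotient $3$-dimensional.

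Even with the corrected relation, your route to the $3$-manifold structure is missing the main ingredient. A Moore-type decomposition argument would have to rule out wild accumulation of strips, and nothing in your outline does that. The paper's mechanism is the set $\cR_1$ of vectors with linearly independent Green subspaces: under the continuity hypothesis one shows (via integrability of the Green fields and uniqueness of invariant codimension-one foliations) that $\cF^\s,\cF^\u$ are tangent to the Green bundles and that $\cR_1$ is open and dense; moreover, for forward-recurrent $\theta\in\cR_1$ the entire leaf $\cF^\s(\theta)$ lies in $\cR_1$ and meets no nontrivial strip. This lets one bound a small cross-section by pieces of stable/unstable leaves through such expansive points, producing a rectangle whose quotient by $\sim$ is visibly a $2$-disc, hence a $3$-dimensional flow box. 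The same $\cR_1$ machinery, combined with expansiveness of the quotient, drives the uniform-contraction estimate on $W^{\ss}$ that underlies the local product structure; your sketch gestures at the projected foliations but does not supply this. Topological mixing, incidentally, comes for free from mixing of the geodesic flow via the semi-conjugacy, so the Smale-type detour is unnecessary.
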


Theorem \ref{main} generalizes \cite{GelRug:19} which put the more restrictive assumption that $(M,g)$ is a compact surface without focal points and genus greater than one. 
Let us in the following discuss our hypotheses and some of the main ingredients for its proof.

Green bundles (bundles of stable (resp. unstable) Green Jacobi fields, see definition in Section \ref{sec:Greenss}) are one of the main tools when studying smooth aspects of the dynamics of geodesic flows. Their existence is a special feature of manifolds without conjugate points and more generally of globally minimizing objects of Lagrangian dynamics (Aubry-Mather theory). One immediate consequence of their definition is that Green bundles  are measurable and invariant under the action of the differential of the geodesic flow. By Eberlein \cite{kn:Eberlein}, their linear independence is equivalent to the property that the geodesic flow is an Anosov flow. The hypothesis in Theorem \ref{main} about continuity of Green bundles  is an additional restriction in the setting of manifolds without conjugate points, and it does not grant \emph{a priori} their linear independence. In examples such as manifolds without focal points (and hence in nonpositive or negative curvature) Green bundles are continuous and in fact have an ``expected'' asymptotic behavior (the stable Green bundle is a counterpart of center stable dynamics, the unstable Green bundle of the center unstable one). Anosov \cite{Ano:69}
 shows that Green bundles coincide with the dynamical invariant bundles of hyperbolic dynamics if the compact manifold has negative curvature. 
Very much as in the classification into \emph{regular} or \emph{rank one} vectors and \emph{singular} or \emph{higher rank} vectors in compact manifolds of nonpositive curvature, here we consider two distinguished sets of vectors of the unit tangent bundle:
\[
	\cR_1
	\eqdef \{\theta\in T^1M \text{ has linear independent Green bundles}\}
\] 
the sets of \emph{generalized rank one vectors} and 
\[
	 \cR_0
	\eqdef\{\theta\in T^1M \text{ defines a trivial strip}\}
	\supset \cR_1,
\]
the set of \emph{expansive vectors}. Note that both are invariant under the geodesic flow. 

A crucial issue in the theory of manifolds without conjugate points is the regularity of the horospheres in the universal covering of the manifold. It is not known whether  horospheres give rise to continuous foliations of the unit tangent bundle of the universal covering, invariant by the geodesic flow, as it is the case in Anosov dynamics. The case of compact \emph{surfaces} without conjugate points is quite special since geodesic rays diverge in the universal covering \cite{kn:Green} and since this property is equivalent to the existence and continuity of horospherical foliations \cite{kn:RuggieroA}. In the more special case of compact \emph{nonpositively curved} surfaces this was shown by Eberlein (see \cite{HeiimH:77}), moreover, in this case  Green bundles vary continuously \emph{and} are tangent to the horospherical foliations. However, in a more general setting (even for compact surfaces without conjugate points) it is not known if the latter remains true. What is known is that a ``tame asymptotic behavior'' of Green bundles usually implies that horospherical foliations exist and are tangent to Green bundles (see, for example, \cite[Part II]{kn:Pesin} and discussion in Section \ref{sec:Greenss}). 

As part of the proof of Theorem \ref{main}, but interesting in itself, the following result states that the continuity of Green bundles implies that the horospherical foliations $\cF^\s$ and $\cF^\u$ (see Section \ref{sec:21} for definition) are continuous foliations by $C^{1}$ leaves, tangent to the Green bundles. It hence justifies the terms \emph{stable} and \emph{unstable foliations} for $\cF^\s$ and $\cF^\u$, respectively. 

\begin{thmy}\label{thmy:manifolds}
	Under the hypotheses of Theorem \ref{main}, it holds:
\begin{itemize}
\item[(1)] The families $\cF^\s$ and $\cF^\u$ are continuous foliations by $C^1$ curves which are tangent to the stable and the unstable Green bundles, respectively.
\item[(2)] The set $\cR_1$ coincides with the set of vectors $\theta\in T^1M$ such that $\cF^\s(\theta)$ and $\cF^\u(\theta)$ intersect transversally at $\theta$.
\item[(3)] The set $\cR_1$ is invariant, open, and dense in $T^1M$. 	
\item[(4)] Any vector $\theta\in T^1M$ with positive (forward or backward) Lyapunov exponent belongs to $\cR_1$.
\end{itemize}
\end{thmy}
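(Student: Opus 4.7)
The plan is to prove (1) first, since (2), (3) and (4) will then follow by combining (1) with standard Lyapunov--entropy arguments. For (1), I would start from the fact that for compact surfaces without conjugate points of genus greater than one, geodesic rays diverge in the universal cover by Green's theorem \cite{kn:Green}, and so by Ruggiero's characterization \cite{kn:RuggieroA} the horospherical families $\cF^\s$ and $\cF^\u$ exist as continuous foliations of $T^1M$, with $C^{1,L}$ leaves coming from the Busemann functions $b_\theta$ (Pesin). The content left to prove is the tangency of these leaves to the Green subbundles. I would parametrize $\cF^\s(\theta)$ near $\theta$ by the horocycle and differentiate in the Sasaki decomposition, producing a perpendicular Jacobi field $J$ along $\gamma_\theta$ such that $(J(0),J'(0))$ spans the tangent line to $\cF^\s(\theta)$ at $\theta$. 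The asymptoticity of the horocycle with $\gamma_\theta$ in positive time, together with the $C^{1,L}$ regularity of $b_\theta$, yields that $\|J(t)\|$ stays bounded as $t\to+\infty$. I would then compare $J$ with the family of Jacobi fields $J_T$ determined by $J_T(0)=J(0)$ and $J_T(T)=0$, and show that $J_T\to J$ uniformly on compact sets as $T\to+\infty$. Since this limit is by definition the stable Green Jacobi field, this identifies the tangent of $\cF^\s(\theta)$ with $E^\s(\theta)$, pointwise. Continuity of the Green bundles enters at this step to guarantee that the pointwise limit family forms a continuous line bundle consistent with the continuous horospherical tangent field. A symmetric argument using Jacobi fields vanishing at $T\to-\infty$ yields tangency of $\cF^\u$ to $E^\u$.

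Claim (2) is then immediate: at each $\theta$, the $C^1$ curves $\cF^\s(\theta)$ and $\cF^\u(\theta)$ in $T^1M$ have tangent lines equal to $E^\s(\theta)$ and $E^\u(\theta)$ respectively, both lying inside the perpendicular subspace $N(\theta)\subset T_\theta T^1M$, so their transverse intersection at $\theta$ is equivalent to $E^\s(\theta)\neq E^\u(\theta)$, that is, to $\theta\in\cR_1$. For (4), the geodesic flow is Hamiltonian and its linearization on $N(\theta)$ preserves a symplectic form, so the Lyapunov exponents on $N(\theta)$ come in opposite pairs $\pm\lambda$. A positive exponent $\lambda>0$ produces an Oseledets splitting of $N(\theta)$ into two distinct one-dimensional subbundles; the standard cone argument forces $E^\s(\theta)$ to lie in the non-expanding direction and $E^\u(\theta)$ in the non-contracting one, and these being distinct gives $\theta\in\cR_1$.

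For (3), invariance is immediate from the flow-invariance of the Green bundles, and openness follows from continuity of the Green bundles, since $E^\s\neq E^\u$ is an open condition for two continuous line fields. Density is the subtlest part: I would combine (4) with positive topological entropy of the geodesic flow, which holds for surfaces of genus greater than one (via the time-preserving semi-conjugacy after reparametrization with the hyperbolic geodesic flow \cite{kn:Ghys,Gro:00}). By the variational principle there exists an ergodic invariant measure $\mu$ with $h_\mu>0$; Ruelle's inequality together with the Hamiltonian symmetry of exponents forces $\mu$-a.e.\ vector to have a positive Lyapunov exponent, hence by (4) to lie in $\cR_1$, so in particular $\cR_1\neq\emptyset$. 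Since $\cR_1$ is open, invariant, nonempty, and the geodesic flow is topologically transitive on $T^1M$ (from Morse shadowing and density of periodic orbits of the hyperbolic model), density of $\cR_1$ follows. I expect the main obstacle to be step (1): establishing the convergence $J_T\to J$ and identifying the horocyclic tangent Jacobi field with the stable Green Jacobi field without any focal-point or uniform-hyperbolicity assumption, relying solely on the continuity of the Green bundles.
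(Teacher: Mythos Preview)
Your approach to (1) differs substantially from the paper's and has a regularity gap. You want to differentiate the parametrization $q\mapsto (q,-\nabla_q b_\theta^+)$ of $\cF^\s(\theta)$ to produce a tangent vector $(J(0),J'(0))$ and hence a Jacobi field along $\gamma_\theta$. But $b_\theta^+$ is only $C^{1,L}$, so this lift is merely Lipschitz as a curve in $T^1M$: it need not have a well-defined tangent line at $\theta$, and the variation through Busemann asymptotes is only Lipschitz in the variation parameter, which does not yield a classical Jacobi field. You are in effect assuming the $C^1$ regularity of $\cF^\s(\theta)$ that you are trying to prove. The paper circumvents this entirely: by Knieper's integrability result the continuous Green bundles integrate to continuous foliations $\cG^\s,\cG^\u$ by $C^1$ curves; then the uniqueness theorem of Barbosa Gomes--Ruggiero \cite{BarRug:07} for invariant codimension-one foliations forces $\cG^{\cs}=\cF^{\cs}$ (fixing the labeling at a hyperbolic periodic orbit and propagating by transitivity); finally, inside a single center-stable leaf both the $\cG^\s$-leaves and the horospheres are orthogonal to the Lipschitz Busemann vector field, hence coincide. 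This indirect route is what makes (1) go through without focal-point or bounded-asymptote hypotheses.

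Your argument for (4) has a genuine error: it does not use the continuity of the Green bundles, yet continuity is essential here. The Ballmann--Brin--Burns example \cite{kn:BBB} exhibits a compact surface without conjugate points and a vector $\theta$ with a positive Lyapunov exponent but $G^\s(\theta)=G^\u(\theta)$; so the ``standard cone argument forces $E^\s(\theta)$ and $E^\u(\theta)$ to be distinct'' is simply false in this setting. The Freire--Ma\~n\'e inclusions $E^\star\subset G^\star\subset E^\star\oplus E^\c$ hold only on the Oseledets regular set and do not by themselves separate $G^\s$ from $G^\u$ at a given $\theta$. The paper's proof (Proposition~\ref{pro:exponents2}) instead works with the Riccati equation: from a Jacobi field with exponent $\lambda>0$ it manufactures another with exponent $-\lambda$, passes to a limit point $\eta$ of the orbit of $\theta$ via Arzel\`a--Ascoli on the Riccati solutions, obtains two distinct globally defined Riccati solutions at $\eta$ (hence $\eta\in\cR_1$), and only then invokes continuity of Green bundles to pull $\theta$ itself into the open invariant set $\cR_1$. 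Since your argument for (3) relies on (4), that gap propagates; the paper avoids this by using the existence of a hyperbolic periodic orbit (where $G^\s\neq G^\u$ for free) together with transitivity, rather than going through (4).
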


Theorem \ref{thmy:manifolds} (3) extends previous results for compact surfaces with no focal points \cite{kn:Pesin} and with bounded asymptote \cite{kn:RosasRug}. It is not known if the continuous Green bundles-hypothesis alone implies any controlled asymptotic behavior of Green Jacobi fields as it does in those cases. 
Note that in general (for example assuming that the surface has nonpositive curvature and does not have an Anosov geodesic flow) there exist vectors in $\cR_1$ with Lyapunov exponent zero (see, for example, \cite{Gel:19}). 

 Theorem \ref{thmy:manifolds} will play a crucial role in the proof of the existence of a 3-dimensional manifold carrying an expansive flow time-preserving, semi-conjugate to the geodesic flow. Although the internal structure of  strips  (classes of bi-asymptotic geodesics) may be quite complicated, nevertheless we obtain --up to time-preserving semi-conjugacy-- a model which describes well the dynamics of the geodesic flow under consideration. 

Returning to the term \emph{nonuniformly hyperbolic dynamics} coined in the beginning, we remark that the relevance of Green bundles was settled after the work by Freire-Ma\~{n}\'{e} \cite{kn:FM}. It draws a connection between the Lyapunov spectrum of the geodesic flow, Green bundles, and the calculation of the metric entropy of the Liouville measure (see also Section \ref{sec:greenspas}). Indeed, negative Lyapunov exponents are associated to stable Green bundles while positive exponents are associated to unstable ones. It is unknown if the converse is true. Even under the assumption of their continuity, Green bundles have no \emph{a priori} prescribed asymptotic behavior and its analysis still remains one of the most subtle issues and challenges of the theory of manifolds without conjugate points.

Under the hypotheses of Theorem \ref{main}, a straightforward combination of the variational principle for entropy \eqref{eq:VP} and Ruelle's inequality \eqref{eq:Ruelle} for the positive topological entropy geodesic flow yields the existence of hyperbolic ergodic measures with large metric entropy. 
As an application, also using Bowen's work about thermodynamical formalism, we show the following result.

\begin{thmy} \label{main2}
	Under the hypotheses of Theorem \ref{main}, the entropy map for the geodesic flow is upper semi-continuous and there is a unique measure of maximal entropy.
\end{thmy}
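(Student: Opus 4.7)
The plan is to transfer both the upper semi-continuity of the entropy map and the uniqueness of the measure of maximal entropy from the quotient flow of Theorem~\ref{main} to the geodesic flow, exploiting the time-preserving semi-conjugacy $\pi\colon T^1M\to X$ and the fact, coming from Theorem~\ref{thmy:manifolds}(4), that a measure of positive entropy is forced to concentrate on $\cR_1$, where the semi-conjugacy is one-to-one.

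The first ingredient is that entropy is preserved by $\pi$: for every $\varphi$-invariant probability measure $\mu$ on $T^1M$,
\[
 h(\varphi,\mu)=h(\psi,\pi_{*}\mu),
\]
where $\varphi$ and $\psi$ denote the geodesic flow and the quotient flow respectively. The nontrivial direction will come from Bowen's fiber-entropy inequality together with the observation that each fiber $\pi^{-1}(x)$ is a strip of bi-asymptotic geodesics, on which the geodesic flow is equicontinuous (pairwise Hausdorff distances along the orbits are uniformly bounded in both time directions) and therefore has zero topological entropy. The reverse inequality $h(\psi,\pi_{*}\mu)\leq h(\varphi,\mu)$ is automatic for factor maps. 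With this identity at hand, upper semi-continuity of $\mu\mapsto h(\varphi,\mu)$ reduces to the analogous statement for $\psi$; but the quotient flow is expansive by Theorem~\ref{main}, and for expansive flows on compact spaces the entropy map is upper semi-continuous by a classical theorem of Bowen. Combined with continuity of the push-forward $\mu\mapsto\pi_{*}\mu$, this proves the first half of the theorem.

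For uniqueness I would apply Bowen's thermodynamical formalism to the quotient flow. Expansivity together with topological mixing and local product structure (Theorem~\ref{main}) yields the specification property and hence a unique measure of maximal entropy $\nu_0$ for $\psi$. Now let $\mu$ be any MME of $\varphi$. Since a compact surface of genus $\geq 2$ without conjugate points has positive topological entropy, one has $h(\varphi,\mu)>0$, so Ruelle's inequality forces $\mu$-a.e.\ vector to have a positive Lyapunov exponent, and Theorem~\ref{thmy:manifolds}(4) then yields $\mu(\cR_1)=1$. On $\cR_1$ the strips are trivial, so $\pi$ is a Borel isomorphism onto its image; combined with the entropy identity above, this shows that $\pi_{*}\mu$ is an MME of $\psi$, hence equals $\nu_0$, which in turn uniquely determines $\mu$.

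The main obstacle in this plan is the zero-topological-entropy claim for the geodesic flow restricted to a single strip, under the weaker hypothesis of continuous Green bundles only. Under the focal-point-free hypothesis of \cite{GelRug:19} strips are flat cylinders and the assertion is immediate, but in the present setting one must exploit the $C^{1}$ regularity of the horospherical foliations and their tangency to the Green bundles provided by Theorem~\ref{thmy:manifolds}(1), together with the uniform boundedness of bi-asymptotic orbits, to rule out any non-trivial transverse expansion inside a strip. A secondary, more technical, point is to verify that the combination \emph{expansive} $+$ \emph{local product structure} $+$ \emph{topologically mixing} in the quotient does produce the specification property in the precise form required to invoke Bowen's uniqueness theorem.
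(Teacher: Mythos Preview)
Your proposal is correct and follows essentially the same route as the paper: zero topological entropy on fibers (Lemma~\ref{Zero-top-entropy}, proved via the uniform bound on strip width together with the $C^1$ regularity of Theorem~\ref{thmy:manifolds}(1)), entropy preservation under the factor map via Ledrappier--Walters, upper semi-continuity from expansivity of the quotient, and uniqueness on the quotient via Franco~\cite{Fra:77} lifted back through the criterion of~\cite{BuzFisSamVas:12} (which is precisely your direct argument that the MME concentrates on $\cR_1$ where the semi-conjugacy is injective). Your secondary concern about specification is absorbed in the paper by citing Franco directly; the implication expansive $+$ local product structure $+$ mixing $\Rightarrow$ unique MME is used there without further comment.
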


Existence and uniqueness of measures of maximal entropy for nonuniformly hyperbolic dynamical systems have been subject of interest in ergodic theory and dynamical systems theory since the 1960s. Knieper \cite{kn:Knieper} brought attention to the subject in the context of geodesic flows proving that for compact rank one manifolds of nonpositive curvature the geodesic flow has a unique measure of maximal entropy. His proof is based on the construction and study of a Patterson-Sullivan measure and was extended in \cite{LiuWanWu:} to compact rank one manifolds without focal points and in \cite{Bos:15} to compact manifolds  without conjugate points and expansive geodesic flow. 
Recently, Climenhaga et al.  \cite{CliKniWar:} generalizes Knieper's work to compact surfaces without conjugate points. There, they essentially follow an extension of Bowen's classical construction \cite{Bow:74} of maximizing measures for expansive homeomorphisms (see \cite{Fra:77} in the case of expansive continuous flows). 
Theorem \ref{main2} for compact surfaces without focal points was shown in \cite{GelRug:19}, and here we largely will follow the strategy developed therein. 
Our approach, in some essential points different from \cite{CliKniWar:}, relies on a direct application of Bowen-Franco's method for expansive dynamics. Once we have Theorem \ref{main}, the expansive model for the geodesic flow of the surface satisfies the assumptions required to conclude that the expansive model has a unique measure of maximal entropy. Then we apply criteria for extensions of expansive dynamics  in \cite{BuzFisSamVas:12} to carry over the uniqueness of the measure of maximal entropy to the extending flow, proving Theorem \ref{main2}.

The paper is organized as follows. In Section \ref{sec:2} we recall some geometric preliminaries, in particular, in Section \ref{sec:Greenss} we define Green bundles.   In Section \ref{expansivepoints} we properly define the above somewhat vaguely introduced term \emph{strip} and investigate properties of the set of generalized rank one vectors. We also study the set of generalized rank one vectors in this section and prove Theorem \ref{thmy:manifolds}, except for item (4) whose proof we postpone to Section \ref{sec:LyapRicc}. In Section \ref{sec:quotient} defines an equivalence relation between vectors of the unit tangent bundle which correspondingly defines a quotient space and quotient flow. The proof of Theorem \ref{main} will be consequence of Theorems \ref{teo:quotient} and \ref{teo:quotientflow} which are proved in Section  \ref{sec:quotient}. 
Section \ref{sec:LyapRicc} discusses the relation between Lyapunov exponents and Green bundles. In Section \ref{sec:entropy} we study the entropy of the geodesic flow on the set of generalized rank one vectors and prove Theorem \ref{main2}. 

\section{Preliminaries}\label{sec:2}

\noindent\emph{Standing Assumption.}
Throughout the paper $(M,g)$ will be a compact connected $C^{\infty}$ Riemannian manifold without boundary and dimension $n$.
We shall always assume that $M$ has \emph{no conjugate points}, that is, the exponential map is nonsingular at every point.
\smallskip

Our main result concerns surfaces, though many statements hold in any dimension.

Each vector $\theta\in TM$ in the tangent bundle of $M$ determines a unique geodesic $\gamma_\theta(\cdot)$ such that $\dot\gamma_\theta(0)=\theta$. The geodesic flow $\Phi=(\phi_t)_{t\in\bR}$ acts by $\phi_t(\theta)=\dot\gamma_\theta(t)$. We shall study its restriction to the unit tangent bundle $T^1M$, which is an invariant subset of $TM$. All the geodesics will be parametrized by arc length.

We shall denote by $\tilde{M}$ the universal covering of $M$ and endow it with the pullback $\tilde g$ of the metric $g$ by the covering map $\pi\colon\tilde{M} \to M$ which gives the Riemannian manifold $(\tilde{M}, \tilde{g})$.
We shall also consider the geodesic flow of this manifold which acts on $T_1\tilde M$  which we will also denote by $\Phi=(\phi_t)_{t\in\bR}$ (the domain of the flow is enough to specify the dynamical system under consideration).
We will consider the natural projection $\bar{\pi} \colon T_{1}\tilde{M} \to T^1M$.
The distance associated to the Riemannian metric $g$ will be denoted by $d_{g}$ and the one associated to $\tilde{g}$ by $d_{\tilde{g}}$. We will omit the metric if there is no danger of confusion.

Given $\theta=(p,v)$, we recall the natural isomorphism between the tangent space $T_vTM$ and $T_pM\oplus T_pM$ via the isomorphism $\xi\mapsto (D\mu(\xi),C(\xi))$, where $\mu\colon TM\to M$ is the canonical projection $\mu(p,v)=p$ and $C\colon TTM\to TM$ is the connection map defined by the Levi-Civita connection. 
One refers to the orthogonal decomposition of $T_\theta TM$ into the \emph{horizontal} and the \emph{vertical} subspace $T_\theta TM=H_\theta\oplus V_\theta$, respectively.  The Riemannian metric on $M$ lifts to the \emph{Sasaki metric} on $TM$ induced by the scalar product structure which we denote by $d_\Sak$ and which is induced by the following scalar product: for $\xi,\eta\in T_vTM$
\[
	\lAngle \xi,\eta\rAngle_v
	=\langle D\mu_v(\xi),D\mu_v(\eta)\rangle_p+\langle C_v(\xi),C_v(\eta)\rangle_p.
\]

\subsection{Jacobi fields}

The notion of conjugate points has variational origin.  Recall that the Jacobi equation of a geodesic $\gamma_{\theta}$ of $(M,g)$ is given by
\begin{equation}\label{eq:Jacobi}
	J''(t) + R(J(t),\dot\gamma_{\theta}(t)) \dot\gamma_{\theta}(t) =0 ,
\end{equation}
where $R$ denotes the curvature tensor and ${}'$ 
denotes covariant differentiation along $\gamma_{\theta}$. Solutions of
equation \eqref{eq:Jacobi} are called \emph{Jacobi fields}. The Jacobi equation arises in the study of the second variation of the length function of smooth curves \cite{kn:DoCarmo}. If $J$ is a Jacobi field along a geodesic $\gamma$ so that $J(t)$ and $J'(t)$ are orthogonal to $\dot\gamma(t)$ for some $t$ (and hence for all $t\in \bR$) then it is called \emph{orthogonal}.

Let $\gamma_{\theta}$ be a geodesic of $(M,g)$. Two points $\gamma_{\theta}(t)$, $\gamma_{\theta}(s)$, $r\neq s$, are \emph{conjugate} along $\gamma_{\theta}$ if there exists a nontrivial Jacobi field $J(r)$ of $\gamma$ which vanishes at $r=t$ and at $r=s$. The geodesic $\gamma_{\theta}\colon (a,b) \to M$ \emph{has no conjugate points} if every nontrivial Jacobi field of $\gamma_{\theta}$ has at most one zero in $(a,b)$. The manifold $(M,g)$ has \emph{no conjugate points} if and only if no geodesic has conjugate points.

Given $\theta=(p,v)$ and $\xi\in T_vTM$, the Jacobi field $J_\xi$ along $\gamma_\theta$ is uniquely determined by its initial conditions $(J_\xi(0),J_\xi'(0)) = (d\mu(\xi), C_v(\xi))\in T_pM\oplus T_pM$. The above described isomorphism acts as $D\phi_t(\xi)\mapsto(J_\xi(t),J_\xi'(t))$ and, in particular,
\[
	\lVert D\phi_t(\xi)\rVert_v^2
	= \lVert J_\xi(t)\rVert_p^2+\lVert J_\xi'(t)\rVert_p^2.
\]
As $(M,g)$ is compact, the curvature is bounded from below by $-\kappa^2\le K$ for some $\kappa>0$. By \cite[Proposition 2.11]{kn:Eberlein1}, it holds $\lVert J_\xi'(t)\rVert\le\kappa\lVert J_\xi(t)\rVert$ and hence
\begin{equation}\label{eq:Lya}
	\lVert J_\xi(t)\rVert
	\le \frac{\lVert D\phi_t(\xi)\rVert_v}{\lVert\xi\rVert_v}
	\le \sqrt{1+\kappa^2}\lVert J_\xi(t)\rVert.
\end{equation}
By the above, there is an intimate relation between Lyapunov exponents and the growth of  nonradial Jacobi fields. We will use this in Section \ref{sec:greenspas}.

In Section \ref{sec:Greenss} we will introduce a distinguished family of Jacobi fields which define the stable and unstable Green bundles. To do so, we need first to discuss some further ingredients.

\subsection{Horospheres and un-/stable submanifolds}\label{sec:21}

A very special property of manifolds with no conjugate points is the
existence of the Busemann functions and horospheres (see, for example,  \cite[Part II]{kn:Pesin} or \cite{kn:Eschenburg} for details).
Given \( \bar\theta = (p,v) \in T_{1}\tilde{M} \), the (\emph{forward} and \emph{backward}) \emph{Busemann functions} \( b_{\bar\theta}^\pm\colon \tilde{M} \to \mathbb R \)
associated to  \( \bar\theta\) are defined by
\[
	b_{\bar\theta}^+(x) \eqdef \lim_{t\to  +\infty}
		d_{\tilde g}(x,\gamma_{\bar\theta}(t)) - t \\
	\quad\text{ and }\quad
	b_{\bar\theta}^-(x) \eqdef \lim_{t\to  +\infty}
	d_{\tilde g}(x,\gamma_{\bar\theta}(-t)) - t \,,
\]
respectively.
For every $\bar\theta$, the Busemann functions $b_{\bar\theta}^\pm$ are $C^1$ functions with $L$-Lipschitz continuous derivative (with $L>0$ being a Lipschitz constant depending on curvature bounds, see \cite[Propositions 1 and 2]{kn:Eschenburg} and also \cite[Satz 3.5]{kn:Knieper}), the gradients $\nabla b_{\bar\theta}^\pm$ are Lipschitz continuous unit vector fields. 
The level sets of the Busemann functions are the horospheres.
We define the (level $0$) (\emph{positive} and \emph{negative}) \emph{horospheres} of $\bar\theta \in  T_{1}\tilde{M}$ by
\[
	H^+(\bar\theta)\eqdef (b_{\bar\theta}^+)^{-1}(0)
	\quad\text{ and }\quad
	H^-(\bar\theta)\eqdef (b_{\bar\theta}^-)^{-1}(0)
	,
\]	
respectively.
Every horosphere is an embedded submanifold of $\tilde M$ of dimension $n-1$ tangent to a Lipschitz plane field.

Let us denote by \( \sigma^{\bar\theta}_t \colon \tilde{M} \to \tilde{M} \) the integral flow of the vector field \( -\nabla b_{\bar\theta}^+ \) (also called \emph{Busemann flow}). The orbits of this flow are the {\em Busemann asymptotes} of $\gamma_{\bar\theta}$. They are geodesics which are everywhere orthogonal to the horosphere $H^+(\bar\theta)$.
In particular, the geodesic \( \gamma_{\bar\theta} \) is an orbit of this
flow and for every $t\in \mathbb R$ we have
\[
	\sigma^{\bar\theta}_t( H^+(\bar\theta))= H^+(\gamma_{\bar\theta}(t)) .
\]

Geodesics $\beta$ and $\gamma$ in  $\tilde{M}$ are \emph{asymptotic} (as $t\to\infty$) if $d_{\tilde g}(\beta(t),\gamma(t))$ is bounded for $t\ge0$, that is, there exists $C>0$ such that $ d_{\tilde g}(\beta(t),\gamma(t))\le C$ for all $t\ge0$, and \emph{bi-asymptotic} if $d_{\tilde g}(\beta(t),\gamma(t))$ is bounded as $t\to\pm\infty$, that is, the previous inequality holds for all $t\in\bR$. Being asymptotic is an equivalence relation and we denote by $\partial\tilde M$ the set of equivalence classes (the \emph{points at infinity}). Given a geodesic $\beta$, we denote by $\beta(\infty)$ its equivalence class and by $\beta(-\infty)$ the equivalence class of the geodesic $\gamma(t)=\beta(-t)$. By \cite{Kli:71}, for every pair of distinct points in $\partial\tilde M$ there exists a (not necessarily unique) geodesic $\beta$ such that $\beta(\infty)$ and $\beta(-\infty)$ are those points at infinity, respectively. 

If $\beta \subset \tilde{M}$ is a geodesic  such that $\beta$ and $\gamma_{\bar\theta}$ are asymptotic, then $\beta$ is (up to reparametrization) a Busemann asymptote of $\gamma_{\bar{\theta}}$. Moreover, if $\inf_{t >0} d_{\tilde g}(\gamma_{\bar{\theta}}(t), \beta(t) ) =0$, then $\beta$ is a Busemann asymptote and $\beta(0) \in H^+(\bar\theta)$. 

Horospheres are equidistant in the sense that, given any point $p \in H^+(\gamma_{\bar\theta}(t))$, the distance $d_{\tilde g}(p, H^+(\gamma_{\bar\theta}(s)))$ is equal to $\vert t-s \vert$. In particular, \( H^+(\gamma_{\bar\theta}(t))\) varies continuously with $t \in \mathbb R$, however it is not known whether horospheres depend continuously (in the compact-open topology%
\footnote{The map $\bar\theta\mapsto H^\pm(\bar\theta)$ is \emph{continuous} (in the compact-open topology) if given a compact ball $B(q,r) \subset \tilde M$ centred at $q$ and of radius $r$ and $\varepsilon>0$, there exists $\delta=\delta(r,q,\varepsilon)$ such that $\lVert\bar\theta-\bar\iota\rVert\le\delta$ implies $d_{\tilde g}\big(H^\pm(\bar\theta)\cap B(q,r),H^\pm(\bar\iota)\cap B(q,r)\big)\le\varepsilon$.}) on their defining vector.
The continuity of $\bar\theta\mapsto H^\pm(\bar\theta)$ is equivalent to the continuity in the $C^1$ topology of the map $\bar\theta\mapsto b^\pm_{\bar\theta}$ uniformly on compact subsets of $\tilde M$. 
By \cite{kn:RuggieroA}, for $(M,g)$ a compact manifold without conjugate points, the latter continuity is equivalent to \emph{uniform divergence of geodesic rays} in $(\tilde M,\tilde g)$.%
\footnote{\emph{Geodesic rays diverge uniformly} if for every $\varepsilon>0$, $L>0$ there exist $s=s(\varepsilon,L)>0$ such that for every pair of vectors $(p,v),(p,w)\in T_1\tilde M$ such that $\angle (v,w)\ge\varepsilon$ for every $t\ge s$ we have $d_{\tilde g}\big(\gamma_{p,v},\gamma_{p,w}\big)\ge L$.}

The case of compact \emph{surfaces} is special.
The divergence of geodesic rays in the universal covering of a compact surface without conjugate points was shown by Green \cite{kn:Green}. In higher dimensions the divergence of geodesic rays in the universal covering of compact manifolds without conjugate points still remains an open question.

The horospheres in $\tilde M$ lift naturally to $T_1\tilde M$ as follows.
Consider the gradient vector fields $\nabla b_{\bar\theta}^\pm$ and define the \emph{positive horocycle} $\tilde{\mathscr F}^{\s}(\bar\theta)$ and the \emph{negative horocycle} $\tilde{\mathscr F}^\u(\bar\theta)$  in $T_1\tilde M$ through $\bar\theta$ to be the restriction of $\nabla b_{\bar\theta}^\pm$ to $H^\pm(\bar\theta)$
\[\begin{split}
	\tilde{\mathscr F}^\s (\bar\theta)
	&\eqdef \big\{(q,-\nabla_{q}b_{\bar\theta}^+)\colon q \in H^+(\bar\theta)\big\}
	\quad\text{ and }
\\	\tilde{\mathscr F}^\u (\bar\theta)
	&\eqdef \big\{(q,\nabla_{q}b_{\bar\theta}^-)\colon q \in H^-(\bar\theta)\big\},
\end{split}\]
respectively.

\begin{remark}\label{rem:smoothmfds}
	As recalled above, Busemann functions are $C^1$ with Lipschitz continuous derivative  (with Lipschitz constant depending on curvature bounds). Each $\tilde{\mathscr F}^{\s}(\bar\theta)$ (each $\tilde{\mathscr F}^{\u}(\bar\theta)$) is the union of the vectors of the unit vector field being normal to the horosphere $H^+(\bar\theta)$ (to $H^-(\bar\theta)$), and hence a continuous $(n-1)$-dimensional submanifold of $T_1\tilde M$.
\end{remark}

By definition, the families $\{\tilde{\mathscr F}^\s(\bar\theta)\}_{\bar\theta\in\tilde M}$ and $\{\tilde{\mathscr F}^\s(\bar\theta)\}_{\bar\theta\in\tilde M}$ both are invariant in the sense that for every $\bar\theta$ and every $t\in\bR$ it holds 
\[
	\phi_t(\tilde{\mathscr F}^\s (\bar\theta))
	= \tilde{\mathscr F}^\s (\phi_t(\bar\theta))\\
	\quad\text{ and }\quad\phi_t(\tilde{\mathscr F}^\u (\bar\theta) )
	= \tilde{\mathscr F}^\u (\phi_t(\bar\theta)).
\]

When $M$ has nonpositive curvature this family provides a continuous foliation of $T_1\tilde M$ \cite{kn:Eschenburg}. In the particular case of a  compact surface without conjugate points each leaf of this foliation is a Lipschitz leaf (this is a consequence of the divergence of geodesic rays in the universal cover due to Green \cite{kn:Green} and the so-called quasi-convexity of the universal cover due to Morse \cite{kn:Morse}). Not assuming anything about curvatures, the Axiom of Asymptoticity introduced in \cite[Definition 5.1 ]{kn:Pesin} also guarantees the continuous foliation-property (see \cite[Theorem 6.1]{kn:Pesin}). At the present state of the art the most general result is the following.
Note that the first claim in this proposition holds true if $(M,g)$ is a compact manifold without conjugate points and has bounded asymptote (we recall its definition at the end of Section \ref{sec:Greenss}) since this property implies uniform divergence of geodesic rays (we refer to \cite{kn:Knieper} and \cite{kn:Green}).

\begin{proposition}[{\cite{kn:RuggieroA}}] \label{hor-continuous}
Let $(M,g)$ be a compact manifold without conjugate points. Then geodesic rays diverge in $(\tilde{M}, \tilde{g})$ if and only if the family $\tilde{\mathscr F}^\s\eqdef\{\tilde{\mathscr F}^\s (\bar\theta)\}_{\bar\theta}$ forms a continuous foliation of $T_{1}\tilde{M}$
(and the latter holds true if and only if $\tilde{\mathscr F}^\u \eqdef\{\tilde{\mathscr F}^\u (\bar\theta)\}_{\bar\theta}$ forms a continuous foliation).
Moreover, both foliations are invariant by the action of the geodesic flow. 

In particular, if $(M,g)$ is a compact surface without conjugate points then the above families form continuous foliations which are invariant by the geodesic flow.
\end{proposition}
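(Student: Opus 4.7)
The plan is to reduce the equivalence to the intermediate condition of $C^1$-continuity, uniform on compact sets, of the map $\bar\theta \mapsto b^+_{\bar\theta}$. As noted in the remark preceding the statement, continuity of $\bar\theta \mapsto H^+(\bar\theta)$ in the compact-open topology is exactly $C^1$-continuity of the Busemann functions. Since by definition $\tilde{\mathscr F}^\s(\bar\theta)$ is the graph of the unit vector field $-\nabla b^+_{\bar\theta}$ over $H^+(\bar\theta)$, continuity of the family $\{\tilde{\mathscr F}^\s(\bar\theta)\}$ in $T_1\tilde M$ is equivalent to simultaneous continuous dependence of the horospheres and of their unit normal fields, which amounts to exactly the same $C^1$-continuity. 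Hence the proposition will follow once I establish the equivalence (uniform divergence of geodesic rays) $\Leftrightarrow$ ($C^1$-continuity of $\bar\theta \mapsto b^+_{\bar\theta}$ uniform on compact sets of $\tilde M$).

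For the direction ``uniform divergence $\Rightarrow$ $C^1$-continuity'', I would work with the finite-time approximants $b^+_{\bar\theta,t}(x) \eqdef d_{\tilde g}(x,\gamma_{\bar\theta}(t))-t$. Since $(M,g)$ has no conjugate points, for any compact set $K \subset \tilde M$ and $t$ large, $x \mapsto b^+_{\bar\theta,t}(x)$ is smooth on $K$ with $-\nabla_x b^+_{\bar\theta,t}(x)$ equal to the unique unit initial vector at $x$ of the geodesic segment joining $x$ to $\gamma_{\bar\theta}(t)$. Uniform divergence of rays forces, as $t\to\infty$, these initial vectors to converge uniformly in $x \in K$ and in $\bar\theta$ varying in a compact set, and the limit is necessarily $-\nabla_x b^+_{\bar\theta}(x)$. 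Combined with the uniform Lipschitz control of gradients of Busemann functions coming from the curvature lower bound (Eschenburg-type estimates), this gives $C^1$ uniform convergence on compact sets and hence the desired joint continuity.

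For the converse, I would argue by contradiction: if geodesic rays did not diverge uniformly, there exist $\varepsilon,L>0$, a common base point $p$, unit vectors $v_n,w_n$ with $\angle(v_n,w_n) \ge \varepsilon$, and times $t_n \to \infty$ with $d_{\tilde g}(\gamma_{p,v_n}(t_n),\gamma_{p,w_n}(t_n)) \le L$. Applying deck transformations to keep base-points in a fundamental domain and passing to subsequences, one gets limit vectors $\bar\theta_\infty \neq \bar\iota_\infty$ whose forward horospheres at time $t_n$ come uniformly close; pulling this back by the equidistance of horospheres contradicts the continuity of $\bar\theta \mapsto H^+(\bar\theta)$ at one of the limits. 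The analogous statement for $\tilde{\mathscr F}^\u$ follows by applying the same argument to the time-reversed flow, since asymptoticity and divergence of geodesic rays are unaffected by this reversal. Invariance of both foliations under $\Phi$ is immediate from $\sigma^{\bar\theta}_t(H^+(\bar\theta)) = H^+(\gamma_{\bar\theta}(t))$ and the analogous identity for the backward Busemann flow.

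The main obstacle is the converse implication: quantifying how a failure of divergence of geodesic rays (a statement about behavior at infinity) propagates back to a visible discontinuity of the horocycle family at finite points, rather than merely to some singular behavior at the ideal boundary. This is where the global structure of horospheres and their equidistance property are essential. Finally, the surface case is a direct consequence of Green's theorem \cite{kn:Green}, which establishes divergence of geodesic rays in the universal cover of any compact surface without conjugate points, so both families are continuous foliations.
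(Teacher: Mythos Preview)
The paper does not give its own proof of this proposition: it is stated with an explicit citation to \cite{kn:RuggieroA} and no argument follows. The surrounding text merely records the auxiliary facts you use---that continuity of $\bar\theta\mapsto H^\pm(\bar\theta)$ in the compact-open topology is equivalent to $C^1$-continuity of $\bar\theta\mapsto b^\pm_{\bar\theta}$ uniformly on compacta, and that the surface case then follows from Green's divergence theorem \cite{kn:Green}---and refers the reader to \cite{kn:RuggieroA} for the equivalence itself. So there is nothing in the paper to compare your argument against beyond these reductions, which you have identified correctly.

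As a sketch of the cited result your forward direction is sound: the finite-time approximants $b^+_{\bar\theta,t}$ have gradients given by radial unit vectors toward $\gamma_{\bar\theta}(t)$, and uniform divergence of rays is exactly what forces these to converge uniformly (in $x$ on compacta and in $\bar\theta$) rather than merely pointwise. Your converse is the weaker part, as you yourself flag. The step ``pulling this back by the equidistance of horospheres contradicts the continuity of $\bar\theta\mapsto H^+(\bar\theta)$ at one of the limits'' is not self-evident: from two distinct limit directions $\bar\theta_\infty\neq\bar\iota_\infty$ whose geodesics stay within bounded distance along a time sequence, one must still manufacture a genuine jump in the horosphere map at a \emph{finite} point, and equidistance alone does not do this---one typically needs an additional argument (for instance, producing two distinct Busemann asymptotes through a single point with the same ideal endpoint, violating uniqueness of the gradient direction). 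This is precisely the content supplied in \cite{kn:RuggieroA}, so your proposal is best read as an outline that correctly isolates where the real work lies rather than a complete proof.
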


The projections of the sets $\tilde{\mathscr F}^\s (\bar\theta)$ and $\tilde{\mathscr F}^\u (\bar\theta)$ by the natural covering map $\bar{\pi}\colon T_{1}\tilde{M} \to T^1M$
give rise to sets ${\mathscr F}^\s (\theta)$ and ${\mathscr F}^\u (\theta)$ which we call \emph{stable} and \emph{unstable foliations}, respectively (these adjectives will be justified by Theorem \ref{thmy:manifolds}).
 In particular, for every $\theta\in T^1M$ and every $t\in\bR$ we have 
\begin{equation}\label{eq:invariance}
	\phi_t(\mathscr F^\s (\theta))
	= \mathscr F^\s (\phi_t(\theta))\\
	\quad\text{ and }\quad
	\phi_t({\mathscr F}^\u (\theta) )
	= \mathscr F^\u (\phi_t(\theta)).
\end{equation}
The collections  $\tilde{\mathscr F}^\s $, $\tilde{\mathscr F}^\u $ are continuous foliations if and only if the families of sets ${\mathscr F}^\s\eqdef\{{\mathscr F}^\s (\theta)\}_\theta$ and ${\mathscr F}^\u\eqdef\{{\mathscr F}^\u (\theta)\}_\theta$ define continuous foliations, respectively.

Finally, let us also define the \emph{center stable} and the \emph{center unstable sets} by
\[
	\tilde{\mathscr F}^\cs(\bar\theta)
	\eqdef \bigcup_{t\in\bR}\phi_t(\tilde{\mathscr F}^\s(\bar\theta))
	\quad\text{ and }\quad
	\tilde{\mathscr F}^\cu(\bar\theta)
	\eqdef \bigcup_{t\in\bR}\phi_t(\tilde{\mathscr F}^\u(\bar\theta)),
\]
respectively. The sets $\tilde{\mathscr F}^\cs(\bar\theta)$ and $\tilde{\mathscr F}^\cu(\bar\theta)$ project to analogously defined sets ${\mathscr F}^\cs(\theta)$ and ${\mathscr F}^\cs(\theta)$, respectively.

One key concept to several topological properties is the following one coined by Eberlein in \cite{kn:Eberlein}. A complete simply connected Riemannian manifold $(M,g)$ is a \emph{uniform visibility manifold} if it has no conjugate points and if for every $\varepsilon>0$ there exists $r=r(\varepsilon)>0$ such that for every $p,x,y\in M$, if the distance between $p$ and the geodesic segment $[x,y]$ is greater than $r$, then the angle at $p$ formed by the geodesic segments $[p,x]$ and $[p,y]$ is less than $\varepsilon$. 

Any compact manifold with negative sectional curvature is a uniform visibility manifold. Moreover, if $(M,g)$ is a compact uniform visibility manifold and $h$ is any other metric on $M$ without conjugate points, then $(M,h)$ is also a uniform visibility manifold \cite{kn:Eberlein}. Hence, in particular, as every compact surface of genus greater than one admits some metric with negative sectional curvature, every compact surface $(M,g)$ without conjugate points is a uniform visibility manifold.

\begin{theorem}\label{teo:2drei}
Let $(M,g)$ be a compact surface without conjugate points.
\begin{itemize}
\item[(1)] The foliations ${\mathscr F}^\s$, ${\mathscr F}^\u$ are minimal.
\item[(2)] The geodesic flow is topologically mixing.
\item[(3)] The geodesic flow has a local product structure in the sense that every two points $(p,v),(q,w)\in T_1\tilde M$, $(q,w)\ne(p,-v)$, are \emph{heteroclinically related}, that is, we have
\[
	\tilde{\mathscr F}^\cs(p,v)\cap\tilde{\mathscr F}^\cu(q,w)\ne \emptyset,\quad
	\tilde{\mathscr F}^\cs(q,w)\cap\tilde{\mathscr F}^\cu(p,v)\ne \emptyset.
\]
\end{itemize}
\end{theorem}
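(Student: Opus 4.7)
My plan is to prove (3) first, deduce (2) from it by standard arguments, and treat (1) via the boundary action. Throughout, I rely on two facts set up earlier: every compact surface without conjugate points of genus greater than one is a uniform visibility manifold (so that Klingenberg's connecting theorem applies at $\partial\tilde M$), and by Proposition \ref{hor-continuous} the families $\tilde{\mathscr F}^\s,\tilde{\mathscr F}^\u$ are continuous foliations of $T_1\tilde M$ invariant under the geodesic flow and the deck action of $\pi_1(M)$.

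For (3), fix $(p,v),(q,w)\in T_1\tilde M$ with $(q,w)\ne(p,-v)$. A vector $\bar\iota\in T_1\tilde M$ belongs to $\tilde{\mathscr F}^\cs(p,v)$ iff $\gamma_{\bar\iota}$ is (up to reparametrization) a Busemann asymptote of $\gamma_{p,v}$, equivalently iff $\gamma_{\bar\iota}(+\infty)=\gamma_{p,v}(+\infty)\in\partial\tilde M$; dually, $\bar\iota\in\tilde{\mathscr F}^\cu(q,w)$ iff $\gamma_{\bar\iota}(-\infty)=\gamma_{q,w}(-\infty)$. Hence exhibiting a vector in $\tilde{\mathscr F}^\cs(p,v)\cap\tilde{\mathscr F}^\cu(q,w)$ reduces to producing a geodesic in $\tilde M$ with prescribed pair of endpoints $\bigl(\gamma_{p,v}(+\infty),\gamma_{q,w}(-\infty)\bigr)$, which Klingenberg's theorem supplies whenever these two boundary points are distinct. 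The symmetric intersection is obtained in the same way. The main technical issue, and what I expect to be the principal obstacle, is to verify that the antipodality exclusion $(q,w)\ne(p,-v)$ together with the visibility property forces the two relevant pairs of boundary points to be distinct, and to handle borderline coincidences by a limit argument along the one-dimensional leaves of $\tilde{\mathscr F}^\s$ and $\tilde{\mathscr F}^\u$ using Proposition \ref{hor-continuous}.

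For (2), topological transitivity is obtained directly from (3): given nonempty open sets $U,V\subset T^1M$, lift them to $T_1\tilde M$, heteroclinically connect two interior vectors, and use invariance \eqref{eq:invariance} together with the equidistance of horospheres to produce an orbit meeting $U$ and then, after sufficient time, meeting $V$. To upgrade transitivity to topological mixing I invoke the standard dichotomy stating that a topologically transitive continuous flow is either mixing or a constant-time suspension over a transitive homeomorphism. The latter case I rule out by exhibiting two closed geodesics whose periods are rationally independent, obtained by applying the closing-lemma style argument available in visibility manifolds (cf.\ Eberlein) to heteroclinic cycles built from (3) and from two distinct periodic vectors, which exist by cocompactness of $\pi_1(M)$ on $\tilde M$ and by transitivity.

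Finally, for (1), I use the fact that the leaves of $\tilde{\mathscr F}^\s$ are the fibers of the continuous surjection $\bar\theta\mapsto\gamma_{\bar\theta}(+\infty)\in\partial\tilde M$, so that density of a leaf $\mathscr F^\s(\theta)\subset T^1M$ is equivalent to density of the corresponding $\pi_1(M)$-orbit in $\partial\tilde M$. The latter is precisely the minimality of the boundary action of the cocompact group $\pi_1(M)$ on the uniform visibility manifold $\tilde M$, which in turn follows from the density in $\partial\tilde M\times\partial\tilde M$ of the endpoint-pairs of hyperbolic deck transformations (that is, of lifts of closed geodesics). This latter density is itself a consequence of the local product structure of (3) combined with the transitivity from (2). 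The argument for $\mathscr F^\u$ is identical after reversing time.
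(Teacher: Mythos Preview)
Your argument for (3) is essentially the paper's: both invoke Klingenberg's connecting theorem to produce a geodesic with the prescribed pair of endpoints, then locate its tangent vector on the correct stable and center-unstable leaves. You are right to flag the endpoint-distinctness issue; the paper glosses over it, and in fact the hypothesis $(q,w)\ne(p,-v)$ as written does \emph{not} force $\gamma_{p,v}(+\infty)\ne\gamma_{q,w}(-\infty)$ (take $(q,w)$ to be any other vector on $\tilde{\mathscr F}^{\cu}(p,-v)$), so the exclusion should really be read at the level of boundary points.

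For (1) and (2) the paper proceeds in the opposite order from you, and by direct citation: Eberlein \cite[Theorem~4.5]{Ebe:77} shows that the horocycle flow on $T^1M$ is minimal for compact uniform visibility surfaces, which immediately gives (1); topological mixing (2) then follows from minimality of the stable (or unstable) foliation by the standard argument.

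Your proposed route to (1) contains a genuine error. You assert that ``the leaves of $\tilde{\mathscr F}^\s$ are the fibers of the continuous surjection $\bar\theta\mapsto\gamma_{\bar\theta}(+\infty)$''; this is false. The fibers of that map are the \emph{center-stable} leaves $\tilde{\mathscr F}^\cs$: two vectors lie on the same $\tilde{\mathscr F}^\s$-leaf only if, in addition to sharing the forward endpoint, their basepoints lie on the \emph{same} horosphere (same level of the Busemann function). Minimality of the $\pi_1(M)$-action on $\partial\tilde M$ therefore yields only minimality of $\mathscr F^\cs$, which is much weaker than (1). Passing from center-stable minimality to strong-stable minimality is precisely the content of Eberlein's theorem and is not a formal consequence of boundary minimality or of (2) and (3). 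Your route to (2) via a ``transitive implies mixing or constant-time suspension'' dichotomy is also not a standard theorem for arbitrary continuous flows; to make it work you would first need density of periodic orbits (a closing lemma) and a non-arithmeticity argument for their periods, whereas the paper's route through (1) avoids this.
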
	

Let us sketch the ingredients of the proof of Theorem \ref{teo:2drei}. By \cite[Theorem 4.5]{Ebe:77}, for a compact uniform visibility surface the horocyle flow on $T^1M$ is minimal (every orbit is dense). This immediately implies that the foliations $ {\mathscr F}^\s,{\mathscr F}^\u$ both are minimal (every leaf is dense), proving (1). The mixing property is an immediate consequence of minimality of the foliations.
Given $(p,v),(q,w)\in T_1\tilde M$, $(q,w)\ne(p,-v)$, there exists a geodesic $\beta$ such that $\beta(\infty)=\gamma_{(p,v)}(\infty)$ and $\beta(-\infty)=\gamma_{(q,w)}(-\infty)$. There is a unique real parameter $t$ such that $\beta(t)\in H^+(p,v)$ and hence $\dot\beta(t)\in\tilde{\mathscr F}^\s(p,v)$. Moreover, there is a unique real parameter $s$ such that $\gamma_{(q,w)}(s)\in H^-(\beta(t))$ and hence $\beta(t)\in\tilde{\mathscr F}^{\cu}(q,w)$, proving (3).

\begin{remark}[Hyperbolic subsets]\label{rem:hyp}
An invariant set $Z\subset T^1M$ is \emph{hyperbolic} (with respect to the geodesic  flow $\Phi$) if there exist constants $C>0$, $\lambda>0$ and for every $\theta\in Z$ there exist subspaces $E^{\s}(\theta)$ and $E^\u(\theta)$ so that  $E^\s(\theta)\oplus E^\u(\theta)\oplus X(\theta)=T_\theta T^1M$, where $X(\theta)$ here is the subspace tangent to the flow, for every $t\in\bR$ we have $D\phi_t(E^\ast(\theta))=E^\ast(\phi_t(\theta))$, $\ast\in\{\s,\u\}$, and for every $t\ge0$, $\xi\in E^\s(\theta)$, $\eta\in E^\u(\theta)$ we have
\[
	\lVert D\phi_t(\xi) \rVert \le Ce^{-\lambda t}\lVert \xi\rVert,\quad
	\lVert D\phi_{-t}(\eta) \rVert \le Ce^{-\lambda t}\lVert \eta\rVert.
\]
One key feature of a compact hyperbolic set $Z$ is that for every $\theta\in Z$ there exist invariant submanifolds $\mathscr W^{\s}(\theta)$ and $\mathscr W^{\u}(\theta)$ which are stable and unstable sets and at $\theta$ are tangent to the subspaces $E^{\s}(\theta)$ and $E^\u(\theta)$, respectively. The geodesic flow is an \emph{Anosov flow} if $T^1M$ is hyperbolic. It is then an immediate consequence that $\mathscr F^{\s}(\theta)$ and $\mathscr F^{\u}(\theta)$ coincide with the stable and unstable submanifolds $\mathscr W^{\s}(\theta)$ and $\mathscr W^{\u}(\theta)$, respectively, at every point $\theta\in Z$.  
\end{remark}

\subsection{Green subspaces}\label{sec:Greenss}

When studying weaker types of hyperbolicity, it is natural to look for subbundles which are invariant  under the action of the linearization of the flow. Green \cite{kn:Green2} identifies a distinguished family of Jacobi fields defined in any geodesic without conjugate points, which is defined as follows.

For $\theta = (p,v)$, let $N_{\theta}\subset  T_{p}M$ denote the set of vectors that are orthogonal to $v$. Take $\xi \in N_{\theta}$, and let $J_{\xi,T}$ be the Jacobi field of $\gamma_{\theta}$ given by the initial conditions
\[
	J_{\xi,T}(0) = \xi, \quad J_{\xi,T}(T)=0.
\]
By \cite{kn:Green2}, for every $t \in \mathbb{R}$ the limit
\[
	J^\s _{\xi}(t)\eqdef \lim_{T \to  \infty}J_{\xi,T}(t)
\]	
exists (and is a Jacobi field satisfying $J^\s_{\xi}(0)=\xi$). The limit is called \emph{stable Green Jacobi field}.
Analogously the \emph{unstable Green Jacobi field} is defined as the limit
\[
	J^\u _{\xi}(t)
	\eqdef  \lim_{T \to  -\infty}J_{\xi,T}(t).
\]	
Moreover, $J^\s _{\xi}(t)$ and $J^\u _{\xi}(t)$ are always orthogonal to $\dot\gamma_{\theta}(t)$ and never vanish.
The collection of initial conditions
$$
	G^\s (\theta)
	\eqdef  \bigcup_{\xi \in N_{\theta}} \{ (J^\s _{\xi}(0), {J^\s _{\xi}}'(0)) \}
	\quad\text{ and }\quad
	G^\u (\theta)
	\eqdef  \bigcup_{\xi \in N_{\theta}}\{ (J^\u _{\xi}(0), {J^\s _{\xi}}'(0)) \}
$$
are called the \emph{stable Green subspace}  and the \emph{unstable Green subspace} at $\theta$, respectively. Both subspaces are Lagrangian subspaces with respect to the canonical two-form of the geodesic flow) restricted to $N_{\theta}$ and the hence defined vector bundles are invariant under the action of the differential of the geodesic flow:
\begin{equation}\label{eq:invGre}
	D\phi_t(G^\ast(\theta))
	= G^\ast(\phi_t(\theta)),
	\quad\ast\in\{\s,\u\}.
\end{equation}

The above construction can be carried over to the universal cover $\tilde M$ and its tangent space. In particular, for every $\bar\theta\in T_1\tilde M$ one can construct stable and unstable Green subspaces $\tilde G^\s(\bar\theta)$ and $\tilde G^\u(\bar\theta)$, respectively.

Below, we will study the case when stable and unstable Green bundles both are continuous. Note that when $(M,g)$ has this property then in the language of \cite{kn:Eschenburg,kn:Knieper} this manifold has \emph{continuous asymptote}.

\begin{remark}[Green subbundles for hyperbolic subsets]\label{rem:GreHyp}
	Given a hyperbolic compact invariant set $Z\subset T^1M$, for every $\theta\in Z$ the stable and unstable Green subspaces at $\theta$ coincide with the usual stable and unstable subspaces of the dynamics, respectively. Moreover, in this case, for every $\theta\in Z$ the stable and unstable submanifolds of the dynamics coincide with the sets ${\mathscr F}^\s (\theta)$ and ${\mathscr F}^\u (\theta)$, respectively, and hence at every point of these submanifolds the Green Jacobi fields are tangent to them: For every $\eta\in {\mathscr F}^\s (\theta)$ it holds that $G^\s(\eta)$ is tangent to ${\mathscr F}^\s (\theta)$. For every $\eta\in {\mathscr F}^\u (\theta)$ the space $G^\u(\eta)$ is tangent to ${\mathscr F}^\u (\theta)$. 
\end{remark}

\begin{remark}In the general case, Green subspaces may not be tangent to the un-/stable sets everywhere. Indeed, an example due to Ballmann et al. \cite{kn:BBB} shows that there exists compact surfaces without conjugate points where un-/stable Green subspaces do not depend continuously on $\theta$, whereas the collections $\{{\mathscr F}^\s(\theta)\}_\theta$ and $\{{\mathscr F}^\u(\theta)\}_\theta$ are always continuous foliations. 
\end{remark}

Without any further assumption on the dynamics of the geodesic flow, it is difficult to characterize un-/stable Green Jacobi fields since they might have unpredictable asymptotic behavior. When $(M,g)$ has nonpositive curvature, the norm of Jacobi fields is convex and therefore a stable Green Jacobi field $J(t)$ is characterized by the existence of a constant $C>0$ such that  $\sup_{t\ge0}\lVert J(t) \rVert \leq C$. The analogous property holds for an unstable Green Jacobi field with $t \leq 0$.

Perhaps the more general sufficient criterion to characterize an un-/stable Green Jacobi field is the following (the proof follows essentially from the divergence of radial Jacobi field). We call a Jacobi field \emph{radial} if $J(t)=0$ for some $t$. We say that radial Jacobi fields \emph{diverge uniformly} if for any positive numbers $c$ and $a$ there exists $T=T(c,a)>0$ such that every nontrivial radial Jacobi field $J$ with $J(0)=0$ satisfies $\lVert J(t)\rVert\ge c$ and $\lVert J'(t)\rVert\ge a$ for every $t\ge T$. See also \cite[Proposition 2.9]{kn:Eberlein1} or \cite[Chapter 3.2]{kn:Ruggieroensaios}.

\begin{lemma} \label{Green-surfaces}
Let $(M,g)$ be a compact manifold without conjugate points.
\begin{enumerate}
\item[(1)] Any orthogonal Jacobi field $J(t)$ which satisfies $\inf_{t>0} \lVert J(t) \rVert =0$ is a stable Green Jacobi field. 
\item[(2)] Suppose that the radial Jacobi fields of $(M,g)$ diverge uniformly. If a orthogonal Jacobi field $J(t)$ satisfies
$\inf_{t >0} \lVert J(t) \rVert  \leq C$ for some $C>0$ then it is a stable Green Jacobi field. 
\item[(3)] If $(M,g)$ is a compact surface without conjugate points then radial Jacobi fields diverge uniformly and therefore item (2) applies.
\end{enumerate}
The analogous statements hold true for unstable Green Jacobi fields.
\end{lemma}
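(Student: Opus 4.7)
The plan is to use the defining property that the stable Green field $J^{\s}_\xi$ is the pointwise limit of the Jacobi fields $J_{\xi,T}$ determined by $J_{\xi,T}(0)=\xi$, $J_{\xi,T}(T)=0$, together with the fact that this convergence is $C^{1}$ at $t=0$, so that $J_{\xi,T}'(0)\to(J^{\s}_\xi)'(0)$. Given a competitor orthogonal Jacobi field $J$ with $J(0)=\xi$, the auxiliary field $E_T:=J-J_{\xi,T}$ is radial (i.e., $E_T(0)=0$), satisfies $E_T(T)=J(T)$, and has initial derivative $E_T'(0)=J'(0)-J_{\xi,T}'(0)$. Showing $J=J^{\s}_\xi$ therefore reduces to showing $E_{T_n}'(0)\to 0$ along a sequence $T_n\to\infty$ on which $\lVert J(T_n)\rVert$ is small.

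I would dispatch item (3) first by appeal to Green \cite{kn:Green}. In dimension two the orthogonal Jacobi equation collapses to the scalar Sturm equation $f''+K(t)f=0$, with $K$ the Gauss curvature along the geodesic, uniformly bounded by compactness. Absence of conjugate points combined with the one-dimensionality then yields the uniform divergence of radial solutions in the sense stated in the paper: for every $c,a>0$ there is $T(c,a)$ such that every nontrivial radial Jacobi field $E$ with $E(0)=0$ satisfies $\lVert E(t)\rVert\ge c$ and $\lVert E'(t)\rVert\ge a$ for all $t\ge T(c,a)$, after normalizing $\lVert E'(0)\rVert=1$.

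For item (2), fix $J$ orthogonal with $\inf_{t>0}\lVert J(t)\rVert\le C$ and pick $T_n\to\infty$ with $\lVert J(T_n)\rVert\le 2C$. The radial field $E_n:=J-J_{\xi,T_n}$ satisfies $E_n(T_n)=J(T_n)$. Applying the uniform-divergence estimate to $E_n/\lVert E_n'(0)\rVert$ at $t=T_n$, with $T_n\ge T(L_n,1)$ for a sequence $L_n\to\infty$, gives
\[
	2C\ge\lVert E_n(T_n)\rVert\ge L_n\,\lVert E_n'(0)\rVert,
\]
and hence $\lVert E_n'(0)\rVert\to 0$. Combined with $J_{\xi,T_n}'(0)\to(J^{\s}_\xi)'(0)$ from Green's construction, this forces $J'(0)=(J^{\s}_\xi)'(0)$ and thus $J=J^{\s}_\xi$.

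Item (1) follows by the same scheme, now with the stronger input $\lVert J(T_n)\rVert\to 0$, which compensates for the absence of a uniform-divergence hypothesis: one needs only the weaker fact that nontrivial radial Jacobi fields stay bounded away from zero on some half-line $[T_0,\infty)$, a property which in the compact no-conjugate-points setting follows from the convergence of $J_{\xi,T}$ together with compactness of $T^1M$. The symmetric statements for the unstable Green field follow by time reversal. The main obstacle in the whole argument is item (3), Green's uniform-divergence theorem for surfaces; once granted, items (1) and (2) reduce to linear-algebraic manipulations of Jacobi-field initial data combined with the defining convergence of $J_{\xi,T}$ to $J^{\s}_\xi$.
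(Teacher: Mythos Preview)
The paper does not actually prove this lemma: it is stated with the one-line remark that ``the proof follows essentially from the divergence of radial Jacobi fields'' and references to \cite[Proposition 2.9]{kn:Eberlein1} and \cite[Chapter 3.2]{kn:Ruggieroensaios}. Your proposal therefore supplies considerably more detail than the paper. For items (2) and (3) your argument is correct and is precisely the standard one found in those references: the auxiliary radial field $E_n=J-J_{\xi,T_n}$ together with uniform divergence forces $\lVert E_n'(0)\rVert\to 0$, and citing Green for the surface case is appropriate.

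There is, however, a gap in your treatment of item (1). You assert that nontrivial normalized radial Jacobi fields are uniformly bounded away from zero on some half-line $[T_0,\infty)$, and that this ``follows from the convergence of $J_{\xi,T}$ together with compactness of $T^1M$.'' This implication is not justified. The Riccati bound gives only $\lVert E(t)\rVert\ge\lVert E(1)\rVert e^{-\kappa(t-1)}$, which permits decay; and compactness of $T^1M$ does not by itself rule out a sequence of normalized radial fields $E_n$ along $\gamma_\theta$ with $\lVert E_n(t_n)\rVert\to 0$ and $t_n\to\infty$. Concretely, in your scheme one has $E_n'(0)\to w:=J'(0)-(J^{\s}_\xi)'(0)$ and $E_n(T_n)=A(T_n)E_n'(0)$, and one would need $\sigma_{\min}(A(T_n))$ bounded below to conclude---but this lower bound is exactly the nontrivial input you are assuming. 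Note also that item (1) is asserted for general compact manifolds, not just surfaces, so Green's divergence theorem is unavailable. Eberlein's actual argument for Proposition 2.9 handles this more carefully; your sketch does not capture it.
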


To fix notation, let us recall some further classifications of manifolds according to their growth behavior of stable Green Jacobi fields (for unstable Green Jacobi fields analogous conditions are put).  A manifold without conjugate points has \emph{bounded asymptote} if there exists $C>0$ such that every stable Green Jacobi field $J$ satisfies $\sup_{t\ge0}\lVert J(t)\rVert\le C\lVert J(0)\rVert$. A manifold has \emph{no focal points} if the norm of any stable Green Jacobi field is always nonincreasing.  
Further, observe that if $M$ has nonpositive curvature then the norm of any stable Green Jacobi field is always a nonincreasing convex function. If $M$ has negative curvature then any stable Green Jacobi field has a norm which decays exponentially. The following implications hold true:
\[
\begin{array}{cccccc}
  &\text{nonpositive curvature} &\Rightarrow   &\text{no focal points}&\Rightarrow &\text{no conjugate points}.
\end{array}
\]
Note also (for example \cite[5.3 Satz]{kn:Knieper}) that for a manifold without conjugate points
\[
\begin{array}{cccc}
  &\text{bounded asymptote}&\Rightarrow   &
  \text{continuous un-/stable Green bundles}\\
  &&&\text{(that is, continuous asymptote)}.
\end{array}
\]

\section{Strips and their relation with Green subspaces} \label{expansivepoints}

In this section, in addition to our Standing Assumption, we assume that $(M,g)$ is a compact surface of genus greater than one.

We start by defining a strip in the universal covering.

\begin{definition}
Given $\bar{\theta} \subset T_{1}\tilde{M}$ the \emph{strip} $ S(\bar{\theta})\subset\tilde{M}$ is the maximal set of geodesics that are bi-asymptotic to $\gamma_{\bar{\theta}}$. 
\end{definition}

The following statement is essentially due to Morse \cite{kn:Morse} and recollects  properties of a strip. Recall the definition of the Busemann flow $\sigma^{\bar{\theta}}_t$ in Section \ref{sec:21}.

\begin{lemma} \label{strips}
For every $\bar\theta\in T_1\tilde M$, it holds
\[
	S(\bar{\theta})
	= \bigcup_{t\in \mathbb{R}} \sigma^{\bar{\theta}}_t( I(\bar{\theta})) ,
	\quad\text{ where }\quad
	 I(\bar{\theta}) 
	\eqdef H^{+}(\bar{\theta}) \cap H^{-}(\bar{\theta}) .
\]	
Moreover, $I(\bar\theta)$ is the arc of a continuous simple curve $\bar c_{\bar{\theta}}\colon [a,b] \to  I(\bar{\theta})$ and $S(\bar\theta)$ is foliated by geodesics which all are bi-asymptotic to $\gamma_{\bar{\theta}}$. 

	If $\bar{\theta} \in T_{1}\tilde{M}$ is the lift of a periodic vector $\theta \in T^1M$, then $S(\bar{\theta})$ is foliated by lifts of periodic geodesics which all are in the same homotopy class of $\gamma_{\theta}$ and which all have the same period.

There exists $Q=Q(M)>0$ such that the Hausdorff distance between any two bi-asymptotic geodesics in $\tilde M$ is bounded from above by $Q$.
\end{lemma}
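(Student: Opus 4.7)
I would first establish the uniform Hausdorff bound. Since $(M,g)$ is a compact surface of genus greater than one, it is a uniform visibility manifold and its universal cover is quasi-isometric to the hyperbolic plane. Morse's shadowing lemma then provides a constant $D=D(M,g)$ such that every geodesic of $(\tilde M,\tilde g)$ lies within distance $D$ from the unique hyperbolic geodesic joining its two endpoints at infinity. Any two bi-asymptotic geodesics in $\tilde M$ share their endpoints at $\partial\tilde M$ and are thus shadowed by a common hyperbolic geodesic, so they lie within Hausdorff distance $Q\eqdef 2D$ of each other.

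Next I turn to the Busemann-asymptote description of bi-asymptotic geodesics. Using the property from Section \ref{sec:21} that any geodesic asymptotic to $\gamma_{\bar\theta}$ at $+\infty$ is, up to time shift, an integral curve of $-\nabla b^+_{\bar\theta}$, together with the symmetric statement at $-\infty$, every bi-asymptotic geodesic $\beta$ is simultaneously a forward and backward Busemann asymptote. Parametrizing $\beta$ so that $b^+_{\bar\theta}(\beta(t))=-t$, the backward asymptoticity forces $b^-_{\bar\theta}(\beta(t))=t+c(\beta)$, where Step~1 together with the $1$-Lipschitz property of $b^-_{\bar\theta}$ gives $|c(\beta)|\le Q$. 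This establishes the foliation of $S(\bar\theta)$ by $\sigma^{\bar\theta}$-orbits. For the identification $S(\bar\theta)=\bigcup_t\sigma_t^{\bar\theta}(I(\bar\theta))$, the key is to show $c(\beta)\equiv 0$, equivalently $b^+_{\bar\theta}+b^-_{\bar\theta}\equiv 0$ on $S(\bar\theta)$. Granted this, the inclusion ``$\supset$'' follows because the $\sigma^{\bar\theta}$-orbit through $p\in I(\bar\theta)$ satisfies $b^\pm_{\bar\theta}(\sigma_t^{\bar\theta}(p))=\mp t$ and stays within $Q$ of $\gamma_{\bar\theta}$, so it is bi-asymptotic; the inclusion ``$\subset$'' follows because every bi-asymptotic geodesic $\beta$ meets $I(\bar\theta)$ at the unique parameter where both Busemann functions vanish.

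The arc structure of $I(\bar\theta)$ is then a consequence of $H^\pm(\bar\theta)$ being $C^1$ curves in the surface $\tilde M$ with Lipschitz tangent line fields: on the strip the two curves coincide, and their common trace is a sub-arc bounded by the two boundary bi-asymptotic geodesics of $S(\bar\theta)$. For the periodic case, if $\theta$ has period $T$ then the generator $\psi$ of the stabilizer of $\gamma_{\bar\theta}$ in $\pi_1(M)$ is an isometry of $(\tilde M,\tilde g)$ with $\psi(\gamma_{\bar\theta}(t))=\gamma_{\bar\theta}(t+T)$; it preserves $S(\bar\theta)$ since bi-asymptoticity is an isometric invariant, and shifts $b^\pm_{\bar\theta}$ by $\mp T$. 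Combined with the Busemann-asymptote structure established above, $\psi$ acts on each $\sigma^{\bar\theta}$-orbit of $S(\bar\theta)$ as the time-$T$ flow $\sigma_T^{\bar\theta}$, so every bi-asymptotic geodesic in $S(\bar\theta)$ is $\psi$-invariant and projects to a closed geodesic of period $T$ in the free homotopy class of $\gamma_\theta$.

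The main obstacle is proving $b^+_{\bar\theta}+b^-_{\bar\theta}\equiv 0$ on $S(\bar\theta)$, or equivalently that $H^+(\bar\theta)$ and $H^-(\bar\theta)$ coincide on the portion meeting the strip. This nontrivial coincidence captures the product structure of $S(\bar\theta)$, and is where the uniform visibility property, the $C^1$ regularity of the Busemann functions, and the Morse shadowing bound must interact decisively; without it, $I(\bar\theta)$ could collapse to the single point $\gamma_{\bar\theta}(0)$ and the identification with $\bigcup_t\sigma_t^{\bar\theta}(I(\bar\theta))$ would fail.
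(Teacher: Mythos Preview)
The paper does not prove Lemma~\ref{strips}; it merely attributes the statement to Morse and treats it as background. So there is no ``paper's own proof'' to compare against, and your proposal should be read as an attempt to supply one.

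Your outline is sound and you have correctly isolated the crux: showing that $b^+_{\bar\theta}+b^-_{\bar\theta}\equiv 0$ on $S(\bar\theta)$. This gap can in fact be closed with a short symmetry argument, using exactly the ingredient you already invoked from Section~\ref{sec:21} (that in this surface setting every asymptotic geodesic is a Busemann asymptote). That ingredient forces $\nabla b^+_{\bar\theta}=\nabla b^+_{\dot\beta(0)}$ globally, hence $b^+_{\bar\theta}-b^+_{\dot\beta(0)}\equiv a$ for some constant $a$, and likewise $b^-_{\bar\theta}-b^-_{\dot\beta(0)}\equiv a'$. Evaluating $b^+_{\bar\theta}+b^-_{\bar\theta}$ on $\gamma_{\bar\theta}$ gives $0$, while evaluating it on $\beta$ gives your constant $c=a+a'$; evaluating $b^+_{\dot\beta(0)}+b^-_{\dot\beta(0)}$ on $\gamma_{\bar\theta}$ gives $-(a+a')=-c$. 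Since both sums are everywhere nonnegative (triangle inequality), $c\ge 0$ and $-c\ge 0$, so $c=0$. This is the ``decisive interaction'' you anticipated, and it does not require visibility beyond what is already used to identify asymptotes with Busemann asymptotes.

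There is, however, a second gap you pass over. In the periodic case you assert that $\psi$ acts on \emph{each} $\sigma^{\bar\theta}$-orbit as $\sigma^{\bar\theta}_T$. What you actually established is only that $\psi$ commutes with the Busemann flow and shifts $b^\pm_{\bar\theta}$ by $\mp T$; this shows $\psi$ permutes the bi-asymptotic geodesics and shifts each \emph{invariant} one by $T$, but it does not by itself show that every leaf is $\psi$-invariant. Equivalently, the induced map $\hat\psi$ on the leaf space $I(\bar\theta)\cong[a,b]$ is an orientation-preserving homeomorphism fixing the endpoints and the point corresponding to $\gamma_{\bar\theta}$, but you have not argued that $\hat\psi=\mathrm{id}$. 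One way to close this is to note that if $\hat\psi(\beta)\ne\beta$ for some leaf, then monotonicity of $\hat\psi$ on the interval forces $\hat\psi^k(\beta)\ne\beta$ for all $k\ne 0$; but the stabilizer in $\pi_1(M)$ of the pair $\gamma_{\bar\theta}(\pm\infty)=\beta(\pm\infty)$ is the cyclic group $\langle\psi\rangle$, so no deck transformation would preserve $\beta$ setwise and $\beta$ would project to a non-closed geodesic lying in the compact annulus $\pi(S(\bar\theta))$. Ruling out this scenario (or at least justifying why $\hat\psi=\mathrm{id}$) requires an additional argument that your proposal does not supply.
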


If the surface has no focal point and, in the notation in Lemma \ref{strips}, if $[a,b]$ is not just one point then the curve $I(\bar\theta)$ is a geodesic and the strip $S(\bar{\theta})$ is \emph{flat}, that is, isometric to $[a,b]\times \mathbb{R}$
endowed with the Euclidean metric for suitably chosen $a<b$ (see the ``flat strip theorem'', \cite[Proposition 5.1]{kn:EO} or \cite[Theorem 7.3]{kn:Pesin}). In general, however, the geometry of a strip might be quite different from a flat object. There are examples of surfaces without conjugate points and with nonflat strips  \cite{kn:Burns}. 

Lemma \ref{strips} justifies the term strip to designate $S(\bar{\theta})$. Note that $I(\bar\theta)$ can contain just a single point, as it is, for example, in the case of negative curvature for any $\bar\theta$. 

\begin{definition}
We say that $S(\bar{\theta})$ is \emph{nontrivial} if $I(\bar\theta)$ is not a single point, otherwise $ S(\bar\theta)$ is \emph{trivial} and in this case we call $\bar\theta$ an \emph{expansive} point.
\end{definition}

Lemma \ref{strips} immediately implies the following.

\begin{corollary} \label{nontransversal}
It holds that
\[
	\bar\cS(\bar\theta)
	\eqdef \bigcup_{t\in\bR}\phi_t(\bar\cI(\bar\theta)),
	\quad\text{ where }\quad
	\bar\cI(\bar\theta)  
	\eqdef \tilde{\mathscr F}^\s (\bar\theta) \cap \tilde{\mathscr F}^\u (\bar\theta),
\]
is a lift of $I(\bar\theta)$ to $T_1\tilde M$. Moreover, $ S(\bar \theta)$ nontrivial if and only if there exists a continuous simple
curve $\bar c_{\bar{\theta}} \colon [0,1] \to T_1\tilde M$ such that 
$\bar c_{\bar{\theta}}([0,1])=  \bar\cI(\bar\theta) $.
\end{corollary}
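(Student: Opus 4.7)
The plan is to exploit the fact, built into Lemma \ref{strips}, that every geodesic bi-asymptotic to $\gamma_{\bar\theta}$ is simultaneously a Busemann asymptote for both $b^+_{\bar\theta}$ and $b^-_{\bar\theta}$. First I would show that the canonical projection $\mu\colon T_1\tilde M\to\tilde M$ restricts to a bijection $\bar\cI(\bar\theta)\to I(\bar\theta)$. Injectivity is automatic because, by Remark \ref{rem:smoothmfds}, each $\tilde{\mathscr F}^\ast(\bar\theta)$ is the graph of the unit vector field $\mp\nabla b^\pm_{\bar\theta}$ over its horosphere. For surjectivity, pick $q\in I(\bar\theta)=H^+(\bar\theta)\cap H^-(\bar\theta)$. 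By Lemma \ref{strips} the leaf of the geodesic foliation of $S(\bar\theta)$ through $q$ is a geodesic $\beta$ bi-asymptotic to $\gamma_{\bar\theta}$. Forward asymptoticity together with $\beta(0)=q\in H^+(\bar\theta)$ forces $\beta$ to coincide with the Busemann asymptote of $\gamma_{\bar\theta}$ through $q$, so $\dot\beta(0)=-\nabla_q b^+_{\bar\theta}$; the backward version of the same argument yields $\dot\beta(0)=\nabla_q b^-_{\bar\theta}$. Hence $(q,\dot\beta(0))\in\bar\cI(\bar\theta)$.

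Next, using the flow-invariance $\phi_t(\tilde{\mathscr F}^\ast(\bar\theta))=\tilde{\mathscr F}^\ast(\phi_t(\bar\theta))$, one has $\phi_t(\bar\cI(\bar\theta))=\bar\cI(\phi_t(\bar\theta))$, which by the previous step is a lift of $I(\phi_t(\bar\theta))$. Since the tangent vector to a bi-asymptotic geodesic evolves under the geodesic flow exactly as its basepoint evolves under the Busemann flow $\sigma^{\bar\theta}$, taking the union over $t\in\bR$ and invoking Lemma \ref{strips} identifies the foot projection of $\bar\cS(\bar\theta)$ with $\bigcup_{t}\sigma^{\bar\theta}_t(I(\bar\theta))=S(\bar\theta)$. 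In particular $\bar\cI(\bar\theta)$ is a lift of $I(\bar\theta)$, as claimed.

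For the nontriviality equivalence, Lemma \ref{strips} parametrizes $I(\bar\theta)$ by a continuous simple curve $c_{\bar\theta}\colon[a,b]\to I(\bar\theta)$, where $a<b$ precisely when $S(\bar\theta)$ is nontrivial. Define $\Psi(q)\eqdef (q,-\nabla_q b^+_{\bar\theta})$. Because $\nabla b^+_{\bar\theta}$ is Lipschitz on $\tilde M$, the map $\Psi$ is continuous, and the first step shows it sends $I(\bar\theta)$ bijectively onto $\bar\cI(\bar\theta)$. After an affine reparametrization of $[a,b]$ onto $[0,1]$, the composition $\bar c_{\bar\theta}=\Psi\circ c_{\bar\theta}$ is the desired continuous simple curve whose image is $\bar\cI(\bar\theta)$; conversely, the existence of such a curve forces $I(\bar\theta)=\mu(\bar\cI(\bar\theta))$ to contain more than one point, i.e., $S(\bar\theta)$ is nontrivial. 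I do not foresee any real obstacle: everything reduces to the identification of bi-asymptotic geodesics with common Busemann asymptotes in both time directions, which is the content of Lemma \ref{strips} and the regularity of $b^\pm_{\bar\theta}$.
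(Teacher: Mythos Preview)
Your proposal is correct and follows the same route as the paper, which simply records that the corollary is an immediate consequence of Lemma \ref{strips} and offers no further argument. You have merely made explicit the identification between bi-asymptotic geodesics and common Busemann asymptotes that the paper leaves to the reader; nothing in your argument departs from that reasoning.
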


By Corollary \ref{nontransversal}, the existence of nontrivial strips is equivalent to the existence of (topologically) nontranversal intersections between stable and unstable leaves in $T_1\tilde M$. Let $\cS(\theta) \subset T^1M$ be the image of $\bar\cS(\bar{\theta})$ by the natural projection from $T_{1}\tilde{M}$ to $T^1M$. We shall as well refer to $\cS(\theta)$ as a \emph{strip}.

\begin{definition}
	We call $\theta\in T^1M$ a \emph{generalized rank one vector} if $G^\s(\theta)\not= G^\u(\theta)$. We denote by
\[
	\cR_1
	\eqdef \{\theta\in T^1M\colon G^\s(\theta)\not=G^\u(\theta)\}
\]
the set of all generalized rank one vectors. We denote by
\[
	\cR_0
	\eqdef \{\theta\in T^1M\colon \cS(\theta)\text{ is trivial}\}
	= \{\theta\in T^1M\colon \cF^\s(\theta)\cap\cF^\u(\theta)=\{\theta\}\}
\]
the \emph{set of expansive vectors}.
\end{definition}

The set of expansive points was also studied in \cite[2.1.4]{CliKniWar:}.

It holds $\cR_1\subset\cR_0$. Note that, by invariance of the Green bundles \eqref{eq:invGre} and by \eqref{eq:invariance},  the sets $\cR_1$ and $\cR_0$ both are invariant under the geodesic flow. Moreover, assuming continuity of Green bundles, both sets are open.  Note that if $(M,g)$ is a compact surface without focal points, then $\cR_1$ is just the set of rank one vectors.

\begin{proof}[Proof of Theorem \ref{thmy:manifolds}]
By Knieper \cite[Theorem 3.8]{kn:Knieper},  the stable and the unstable Green Jacobi fields are integrable vector fields, respectively.  Hence, there exist continuous foliations $\cG^\s$ and $\cG^\u$ of $T^1M$ by $C^1$ curves which, using invariance of the Green bundles \eqref{eq:invGre}, are invariant in the sense that for every $\theta\in T^1M$ and every $t\in\bR$ there hold
\[
	\phi_t(\cG^\ast(\theta))=\cG^\ast(\phi_t(\theta))
\]
and
\[
	T_\eta\cG^\ast(\theta)=\cG^\ast(\eta)
	\quad\text{ for every }\eta\in \cG^\ast(\theta),
\]
for $\ast\in\{\s,\u\}$, respectively.

By \cite[Theorem A]{BarRug:07}, the center stable and the center unstable foliations $\cF^\cs$ and $\cF^\cu$ are the only continuous invariant codimension-one foliations of the geodesic flow satisfying the hypotheses. Hence, letting
\[
	\cG^\cs(\theta)
	\eqdef \bigcup_{t\in\bR}\phi_t(\cG^\s(\theta))
	\quad\text{ and }\quad
	\cG^\cu(\theta)
	\eqdef \bigcup_{t\in\bR}\phi_t(\cG^\u(\theta)),
\]
it holds either $\cG^\cs=\cF^\cs$ or $\cG^\cs=\cF^\cu$, and analogously for $\cG^\cu$. 
Let $\theta\in T^1M$ be a hyperbolic periodic vectors. Hence $G^\s(\theta)$ is tangent to $\cF^\s(\theta)$  and $G^\u(\theta)$ is tangent to $\cF^\u(\theta)$ (Remark \ref{rem:GreHyp}). Thus, it follows $\cG^\cs(\theta)=\cF^\cs(\theta)$ and $\cG^\cu(\theta)=\cF^\cu(\theta)$ and, by the assumed continuity, they are transverse in an open neighborhood $U$ of the orbit of $\theta$. Hence, the continuity of the foliations $\cF^\cs$ and $\cF^\cu$ implies 
\begin{equation}\label{samefol}
	\cG^\cs(\eta)=\cF^\cs(\eta)
	\quad\text{ and }	\quad
	\cG^\cu(\eta)=\cG^\cu(\eta)
\end{equation}
for any $\eta\in U$. 
Note that transversality is preserved under the flow. By Eberlein  \cite[Theorem 3.7]{kn:Eberlein}, the geodesic flow is topologically transitive. Together with continuity of these foliations, it follows that  \eqref{samefol} holds for all $\eta\in T^1M$.

Hence, we have already shown that each leaf $\cF^\cs(\theta)$ is sub-foliated by the leaves of $\cG^\s$. Given $\theta\in T^1M$, consider any of its lifts $\bar\theta$ to $T_1\tilde M$ and consider the corresponding foliation $\tilde\cG^\cs$ which by analogous arguments coincides with $\tilde\cF^\cs$. Let $\mu\colon T_1\tilde M\to\tilde M$ be the canonical projection. Recall that the projection $\mu(\tilde\cF^\cs(\bar\theta)) =\mu(\bigcup_{t\in\bR}\phi_t(\tilde\cF^\s(\bar\theta)))$ gives rise to the Busemann flow associated to $\bar\theta$, and the leaves $\mu(\phi_t(\tilde\cF^\s(\bar\theta)))$ are just the horospheres $H^+(\gamma_{\bar\theta}(t))$. The projection $\mu(\phi_t(\tilde\cG^\s(\bar\theta)))$ gives rise to a foliation of $\mu(\tilde\cF^\cs)$ which is everywhere orthogonal to the vector field of the Busemann flow.  Recall that the Green subbundle $G^\s$ is orthogonal to the vector field defining the geodesic flow (in fact, everywhere in $T_1\tilde M$, not just in $\tilde\cF^\cs(\bar\theta)$). 
Since the Busemann vector field $-\nabla b^+_{\bar\theta}$ is a Lipschitz continuous vector field, its orthogonal $(-\nabla b^+_{\bar\theta})^\perp$ inherits this Lipschitz regularity. Hence, the foliations $\{\mu(\phi_t(\tilde\cG^\s(\bar\theta)))\}_t$ and $\{H^+(\gamma_{\bar\theta}(t)\}_t$, being tangent to  $(-\nabla b^+_{\bar\theta})^\perp$, must coincide. This implies $\cF^\s(\bar\eta)=\cG^\s(\bar\eta)$ for every $\eta\in\cF^\cs(\bar\theta)$, which implies $\cF^\s=\cG^\s$. 
This proves item (1) and item (2).

As we assume that both Green bundles vary continuously, given a periodic hyperbolic vector $\theta\in T^1M$,  there is an open set $U\subset T^1M$ containing the orbit of $\theta$ such that  $G^\s(\eta)$ and $G^\u(\eta)$ are linearly independent for every $\eta\in U$. Again using  transitivity, this proves item (3).

Item (4) will be a consequence of Proposition \ref{pro:exponents2} (3).
\end{proof}

\begin{proposition}\label{prolem:liminf}
	For every $\theta\in\cR_1$ that is forward recurrent (with respect to the geodesic flow), for every $\eta\in\cF^\s(\theta)$ there exists a sequence $t_n\to\infty$ such that it holds
\begin{equation}\label{eq:prolem}
	\lim_{n\to\infty}d_\Sak(\phi_{t_n}(\eta),\phi_{t_n}(\theta))=0
\end{equation}
and $\phi_{t_n}(\theta)\to\theta$ as $n\to\infty$. The analogous statement holds true for $\cF^\u$ as $t\to-\infty$.
\end{proposition}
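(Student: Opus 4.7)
The plan is to lift everything to the universal cover and combine three ingredients: the bounded distance between (forward) asymptotic geodesics in $\tilde M$, the continuity of the horospherical foliation $\tilde{\mathscr F}^\s$ (Proposition \ref{hor-continuous}, available since $M$ is a surface), and the triviality of the strip $S(\bar\theta)$, which is guaranteed by $\theta\in\cR_1\subset\cR_0$.

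Fix a lift $\bar\theta\in T_1\tilde M$ of $\theta$ and choose a lift $\bar\eta\in\tilde{\mathscr F}^\s(\bar\theta)$ of $\eta$. Since $\bar\eta$ is the gradient vector to $H^+(\bar\theta)$ at $\mu(\bar\eta)$, the geodesic $\gamma_{\bar\eta}$ is a Busemann asymptote of $\gamma_{\bar\theta}$, so there exists $Q=Q(\bar\theta,\bar\eta)>0$ with $d_{\tilde g}(\gamma_{\bar\eta}(t),\gamma_{\bar\theta}(t))\le Q$ for every $t\ge 0$. Forward recurrence of $\theta$ provides $t_n\to\infty$ and deck transformations $T_n\in\pi_1(M)$ (isometries of $\tilde M$ and Sasaki isometries of $T_1\tilde M$ commuting with $\phi$) with $T_n\phi_{t_n}(\bar\theta)\to\bar\theta$. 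By the flow-invariance $\phi_t(\tilde{\mathscr F}^\s(\bar\theta))=\tilde{\mathscr F}^\s(\phi_t(\bar\theta))$ together with the deck-invariance of $\tilde{\mathscr F}^\s$, one has $T_n\phi_{t_n}(\bar\eta)\in\tilde{\mathscr F}^\s(T_n\phi_{t_n}(\bar\theta))$, and its base point lies within $\tilde g$-distance $Q$ of $\mu(T_n\phi_{t_n}(\bar\theta))$. Hence $(T_n\phi_{t_n}(\bar\eta))_n$ is contained in a compact subset of $T_1\tilde M$. Extracting a subsequence, $T_n\phi_{t_n}(\bar\eta)\to\bar\eta^*$ for some $\bar\eta^*\in T_1\tilde M$, and Proposition \ref{hor-continuous} together with continuity of $\tilde{\mathscr F}^\s$ places $\bar\eta^*\in\tilde{\mathscr F}^\s(\bar\theta)$.

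The crux, where the hypothesis $\theta\in\cR_1$ enters, is to show $\bar\eta^*=\bar\theta$. For any fixed $s\in\bR$, eventually $t_n+s\ge 0$, so $d_{\tilde g}(\mu(T_n\phi_{t_n+s}(\bar\eta)),\mu(T_n\phi_{t_n+s}(\bar\theta)))\le Q$. Passing to the limit, using that $T_n$ commutes with $\phi_s$ and hence $T_n\phi_{t_n+s}(\bar\theta)\to\phi_s(\bar\theta)$ and $T_n\phi_{t_n+s}(\bar\eta)\to\phi_s(\bar\eta^*)$, we obtain $d_{\tilde g}(\mu(\phi_s(\bar\eta^*)),\mu(\phi_s(\bar\theta)))\le Q$ for \emph{every} $s\in\bR$. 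Thus $\gamma_{\bar\eta^*}$ is bi-asymptotic to $\gamma_{\bar\theta}$. Since $\cR_1\subset\cR_0$, the strip $S(\bar\theta)$ is trivial, and Lemma \ref{strips} gives $S(\bar\theta)=\gamma_{\bar\theta}(\bR)$ as a subset of $\tilde M$; this forces $\bar\eta^*=\phi_{s_0}(\bar\theta)$ for some $s_0\in\bR$. But $\bar\eta^*\in\tilde{\mathscr F}^\s(\bar\theta)$ means $b^+_{\bar\theta}(\mu(\bar\eta^*))=0$, while $b^+_{\bar\theta}(\gamma_{\bar\theta}(s_0))=-s_0$, so $s_0=0$ and $\bar\eta^*=\bar\theta$.

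Both $T_n\phi_{t_n}(\bar\eta)$ and $T_n\phi_{t_n}(\bar\theta)$ thus converge to $\bar\theta$ in $T_1\tilde M$, so their Sasaki distance tends to zero. Since deck transformations are Sasaki isometries and $\bar\pi$ is a local Sasaki isometry,
\[
	d_\Sak(\phi_{t_n}(\eta),\phi_{t_n}(\theta))
	\le d_\Sak(\phi_{t_n}(\bar\eta),\phi_{t_n}(\bar\theta))
	=d_\Sak(T_n\phi_{t_n}(\bar\eta),T_n\phi_{t_n}(\bar\theta))\to 0,
\]
establishing \eqref{eq:prolem}; the companion fact $\phi_{t_n}(\theta)\to\theta$ is built into the choice of $t_n$. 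The analogous $\cF^\u$-statement for backward-recurrent $\theta$ follows by reversing time. The main obstacle is upgrading the \emph{forward} asymptote $\bar\eta\in\tilde{\mathscr F}^\s(\bar\theta)$ to the \emph{bi}-asymptote $\bar\eta^*$ in the limit: this is possible only because recurrence allows pulling the orbits back after arbitrarily long forward times, and once achieved, it is precisely the triviality of $S(\bar\theta)$ coming from $\cR_1\subset\cR_0$ that rigidly collapses $\bar\eta^*$ onto $\bar\theta$.
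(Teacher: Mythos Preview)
Your proof is correct and follows essentially the same route as the paper's: lift to the universal cover, use recurrence and deck transformations to pull back the forward orbits, extract a limit vector lying in $\tilde{\mathscr F}^\s(\bar\theta)$ whose geodesic is bi-asymptotic to $\gamma_{\bar\theta}$, and then invoke $\cR_1\subset\cR_0$ (triviality of the strip) to force the limit to equal $\bar\theta$. The only cosmetic difference is that the paper argues by contradiction (assuming $d_\Sak\ge a$ along the whole recurrence sequence and deriving a nontrivial bi-asymptotic geodesic), whereas you argue directly that every accumulation point of $T_n\phi_{t_n}(\bar\eta)$ is $\bar\theta$.
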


Property \eqref{eq:prolem} was shown in \cite[Lemma 6.7]{CliKniWar:} for almost every vector $\theta\in T^1M$ (relative to any invariant probability measure giving full measure to $\cR_1$) using properties of generalized rank one vectors and ergodic theory-arguments. Notice that, assuming additionally that $(M,g)$ has no focal points, property \eqref{eq:prolem} is true for every $\theta\in \cR_1$ and moreover it holds convergence as $t\to\infty$.

\begin{proof}[Proof of Proposition \ref{prolem:liminf}]
	As $\theta$ is recurrent, there exists a sequence $t_n\to\infty$ such that $\phi_{t_n}(\theta) \to\theta$ as $n\to\infty$. By contradiction, suppose that there exist $\eta\in\cF^\s(\theta)$ and $a>0$ such that  for all $n$ it holds
\[
	d_\Sak(\phi_{t_n}(\eta),\phi_{t_n}(\theta))
	\ge a.
\]		 
Let $\bar\theta\in T_1\tilde M$ be a lift of $\theta$ and let $\bar\eta\in T_1\tilde M$ be a lift of $\eta$  satisfying $\bar\eta\in \tilde\cF^\s(\tilde\theta)$. 
Then there is $a'>0$ such that the corresponding geodesic curves in $\tilde M$ satisfy $d_{\tilde g}(\gamma_{\bar\theta}(t_n),\gamma_{\bar\eta}(t_n))\ge a'$ for all $n$. Recall that for every $t\in \bR$ the point $\gamma_{\bar\eta}(t)$ belongs to the horosphere $H^+(\gamma_{\bar\theta}(t))$. Since, by hypothesis, $\theta$ is accumulated by $\phi_{t_n}(\theta)$, we can choose covering isometries $T_n\colon \tilde M\to\tilde M$ such that
\[
	\bar\theta_n
	\eqdef \big(T_n(\gamma_{\bar\theta}(t_n)),DT_n(\dot\gamma_{\bar\theta}(t_n))\big)
	\to \bar\theta
\]	
as $n\to\infty$. Up to considering some subsequence, we can assume that the sequence
\[
	\bar\xi_n
	\eqdef \big(T_n(\gamma_{\bar\xi}(t_n)),DT_n(\dot\gamma_{\bar\xi}(t_n))\big)
\]
converges as $n\to\infty$, denote  its limit by $\bar\xi_\infty$.
The geodesics $\gamma_{\bar\theta_n}$ and $\gamma_{\bar\xi_n}$ then satisfy:
\begin{itemize}
\item $\lim_{n\to\infty}\gamma_{\bar\theta_n}(t)=\gamma_{\bar\theta}(t)$ uniformly on compact intervals of $t\in\bR$,
\item There exists $D=D(\bar\xi)>0$ such that
\[
	 d_{\tilde g}(\gamma_{\bar\theta_n}(t),\gamma_{\bar\xi_n}(t))\le D
	\quad\text{ for all }t\in[-t_n,\infty),
\]
\item $d_{\tilde g}(\gamma_{\bar\theta_n}(0),\gamma_{\bar\xi_n}(0))\ge a'$ for all $n$,
\end{itemize}
The limiting geodesic $\gamma_{\bar\xi_\infty}$ hence satisfies:
\begin{itemize}
\item $d_{\tilde g}(\gamma_{\bar\theta}(t),\gamma_{\bar\xi_\infty}(t))\le D$ for all $t\in\bR$,
\item $d_{\tilde g}(\gamma_{\bar\theta}(0),\gamma_{\bar\xi_\infty}(0))\ge a'$,
\item $\bar\xi_\infty\in\tilde\cF^\s(\bar\theta)$.
\end{itemize}
Indeed, the latter property is a consequence of the continuity of the stable foliation and the fact that $\bar\xi_n\in\tilde\cF^\s(\bar\theta_n)$ for all $n$.
Thus, the geodesics $\gamma_{\bar\xi_\infty}$ and $\gamma_{\bar\theta}$ are bi-asymptotic and hence they bound a strip of geodesics all being bi-asymptotic. But this contradicts that $\bar\theta$ is the lift of a vector in $\cR_1$.
\end{proof}

Note that if $\theta$ is contained in a hyperbolic invariant set (recall Remark \ref{rem:hyp}; in particular this holds if $\theta$ is a hyperbolic periodic point), then the sets $\mathscr F^{\s}(\theta)$ and $\mathscr F^{\u}(\theta)$ are just the stable and unstable submanifolds at $\theta$ and hence at $\theta$ they are transverse and tangent to the stable and unstable Green subspaces, respectively. The following result states that $\mathscr F^\s(\iota)$ and $\mathscr F^\u(\iota)$ are also transverse as $\iota$ varies along $\mathscr F^\s(\theta)$  (analogously for $\mathscr F^\u(\theta)$). 

\begin{corollary}\label{cor:transverse}
	For every $\theta\in \cR_1$ that is forward recurrent (with respect to the geodesic flow) it holds $\cF^\s(\theta)\subset\cR_1$. In particular, it holds
\[
	\cF^\s(\eta)\cap\cF^\u(\eta)
	=\{\eta\}
	\quad\text{ for all }\quad
	\eta\in\cF^\s(\theta).
\]	
The analogous statements hold true for $\cF^\u(\theta)$.
\end{corollary}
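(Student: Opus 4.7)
The plan is to use Proposition \ref{prolem:liminf} to push any $\eta\in\cF^\s(\theta)$ arbitrarily close to $\theta$ along positive time, and then to combine the continuity of the Green bundles (an assumption under Theorem \ref{main}) with their invariance under $D\Phi$ to transfer the linear independence of $G^\s$ and $G^\u$ from $\theta$ back to $\eta$.

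In detail, I would first fix $\eta\in\cF^\s(\theta)$ and apply Proposition \ref{prolem:liminf} to produce a sequence $t_n\to\infty$ with $\phi_{t_n}(\theta)\to\theta$ and $d_\Sak(\phi_{t_n}(\eta),\phi_{t_n}(\theta))\to 0$; by the triangle inequality in the Sasaki metric, this forces $\phi_{t_n}(\eta)\to\theta$ as well. Since $\theta\in\cR_1$ and the Green bundles $G^\s$, $G^\u$ are continuous on $T^1M$, there is an open neighborhood $U$ of $\theta$ on which $G^\s(\iota)\neq G^\u(\iota)$ for every $\iota\in U$. For all sufficiently large $n$ one has $\phi_{t_n}(\eta)\in U$, so $G^\s(\phi_{t_n}(\eta))\neq G^\u(\phi_{t_n}(\eta))$. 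Pulling this back by $D\phi_{-t_n}$ and using the invariance \eqref{eq:invGre}, which preserves linear independence, yields $G^\s(\eta)\neq G^\u(\eta)$, that is, $\eta\in\cR_1$. This shows the first claim $\cF^\s(\theta)\subset\cR_1$.

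The identity $\cF^\s(\eta)\cap\cF^\u(\eta)=\{\eta\}$ for every $\eta\in\cF^\s(\theta)$ then follows immediately from the inclusion $\cR_1\subset\cR_0$, since by definition $\cR_0$ is precisely the set of vectors whose stable and unstable leaves meet only at the base point. The statement for $\cF^\u(\theta)$ is identical after reversing time, using the $t_n\to-\infty$ version of Proposition \ref{prolem:liminf}. I do not expect any substantive obstacle here: the whole argument is essentially a continuity-plus-invariance transport, with the delicate shadowing-type step already absorbed into Proposition \ref{prolem:liminf}; the only care needed is the observation that $\phi_{t_n}(\eta)\to\theta$ (not merely the boundedness of $d_\Sak(\phi_{t_n}(\eta),\phi_{t_n}(\theta))$), which is what allows us to actually land inside the open set $U$ where linear independence persists.
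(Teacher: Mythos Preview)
Your proposal is correct and follows essentially the same argument as the paper: use Proposition \ref{prolem:liminf} to bring $\phi_{t_n}(\eta)$ into an open neighborhood $U\subset\cR_1$ of $\theta$ (the paper invokes Theorem \ref{thmy:manifolds} (3) for this open set, you obtain it directly from continuity of the Green bundles), then pull back linear independence via the invariance \eqref{eq:invGre}, and conclude with $\cR_1\subset\cR_0$. Your explicit use of the triangle inequality to get $\phi_{t_n}(\eta)\to\theta$ is a welcome clarification of a step the paper leaves implicit.
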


\begin{proof}
	Given $\theta\in\cR_1$ forward recurrent, by Theorem \ref{thmy:manifolds} (3), there exists an open set $U\subset \cR_1$ containing $\theta$. Since $\theta$ is forward recurrent, by Proposition \ref{prolem:liminf} for every $\eta\in\cF^\s(\theta)$ there is a sequence $t_n\to\infty$ such that  $\phi_{t_n}(\eta)\in U$. As the Green bundles are invariant and transversality is preserved under the application of $D\phi_t$, it follows that $G^\s(\eta)$ and $G^\u(\eta)$ are transverse,  and hence $\eta\in\cR_1$. This together with $\cR_1\subset\cR_0$ implies the claim.
\end{proof}

In Section \ref{sec:quotient}, we will use the above results to construct a basis for the quotient topology.

\section{The quotient flow: definition and properties}\label{sec:quotient}

\subsection{Quotient space and the model flow}

Analogously to \cite[Section 4]{GelRug:19}, we say that two points $\theta,\eta\in T^1M$ are \emph{related} $\theta\sim\eta$ if and only if 
\begin{itemize}
\item $\eta\in{\mathscr F}^\s(\theta)$,
\item if $\bar\theta$ is any lift of $\theta$ and $\bar\eta$ is any lift of $\eta$ to $T_1\tilde M$ satisfying $\bar\eta\in{\tilde {\mathscr F}}^\s(\bar\theta)$, then the geodesics $\gamma_{\bar\theta}$ and $\gamma_{\bar\eta}$ are bi-asymptotic.
\end{itemize}
The above relation indeed defines an equivalence relation on $T^1M$.
Given $\theta\in T^1M$, denote by $[\theta]$ the equivalence class containing $\theta$. Denote by $X\eqdef T^1M/_\sim$ the set of all equivalence classes and equip it with the quotient topology. Denote by $\chi\colon T^1M\to X$,  $\chi(\theta)\eqdef [\theta]$, the quotient map. We consider the flow $\Psi=(\psi_t)_{t\in\bR}$, $\Psi\colon \bR\times X\to X$ defined by $\psi_t=\Psi(t,\cdot)$ as
\[
	\psi_t([\theta])
	\eqdef [\phi_t(\theta)].
\] 
This quotient flow is continuous in the quotient topology generated by the topology in $T^1M$. By the very definition of the flows and because the geodesic flow preserves the foliations ${\mathscr F}^\s$ and ${\mathscr F}^\u$ (compare \eqref{eq:invariance}), $\Psi$ is a time-preserving factor of the geodesic flow $\Phi$ by means of $\chi$, that is, for every $t\in\bR$ it holds
\begin{equation}\label{eq:semiconju}
	\chi\circ\phi_t 
	=\psi_t\circ\chi.
\end{equation}

The above defined equivalence relation on $T^1M$ with quotient map $\chi$ naturally induces an equivalence relation in $T_1\tilde M$. We denote by $[\bar\theta]$ the corresponding equivalence class of $\bar\theta\in T_1\tilde M$, by $\bar X$ the set of all equivalence classes, by $\bar\chi\colon T_1\tilde M\to\bar X$ the quotient map, and by $\bar\Psi\colon \bR\times\bar X\to\bar X$ the corresponding quotient flow. Denote by $\bar\Pi\colon\bar X\to X$ the corresponding canonical projection.

The following result is immediate.

\begin{lemma}\label{lem:lochom}
	For every $\theta\in\cR_1$ there exists an open set $U\subset \cR_1$ of $\theta$ such that $\chi|_U\colon U\to\chi(U)$  is a local homeomorphism.
\end{lemma}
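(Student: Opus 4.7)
The plan is to take $U$ to be an open neighborhood of $\theta$ inside $\cR_1$ and verify that $\chi|_U\colon U\to\chi(U)$ is continuous, injective, and open; these three properties will together make $\chi|_U$ a homeomorphism onto its image, and in particular a local homeomorphism.

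By Theorem~\ref{thmy:manifolds}~(3), $\cR_1$ is open in $T^1M$, so I fix an open neighborhood $U\subset\cR_1$ of $\theta$. Continuity of $\chi|_U$ is automatic from the quotient topology. The heart of the argument is to show $[\eta]=\{\eta\}$ for every $\eta\in U$. Since $U\subset\cR_1\subset\cR_0$, every $\eta\in U$ is expansive, in the sense that $\cF^\s(\eta)\cap\cF^\u(\eta)=\{\eta\}$. For a chosen lift $\bar\eta\in T_1\tilde M$, I would invoke Corollary~\ref{nontransversal} to parametrize $\bar\cI(\bar\eta)=\tilde\cF^\s(\bar\eta)\cap\tilde\cF^\u(\bar\eta)$ by a continuous simple curve; were this a nontrivial arc, its image under the (locally injective) natural projection $\bar\pi\colon T_1\tilde M\to T^1M$ would give a nontrivial arc of intersection points of $\cF^\s(\eta)$ with $\cF^\u(\eta)$, contradicting expansiveness. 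Hence $\bar\cI(\bar\eta)=\{\bar\eta\}$ and, by Lemma~\ref{strips}, the strip $S(\bar\eta)\subset\tilde M$ reduces to the single geodesic $\gamma_{\bar\eta}(\bR)$.

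Now assume $\iota\sim\eta$ with $\iota\in T^1M$. Then there exist lifts $\bar\iota$ of $\iota$ and $\bar\eta$ of $\eta$ with $\bar\iota\in\tilde\cF^\s(\bar\eta)$ and such that $\gamma_{\bar\iota}$ is bi-asymptotic to $\gamma_{\bar\eta}$. Triviality of $S(\bar\eta)$ forces $\gamma_{\bar\iota}$ to coincide with $\gamma_{\bar\eta}$ as a subset of $\tilde M$; since both geodesics have unit speed and point to the same endpoint at infinity, one obtains $\bar\iota=\phi_t(\bar\eta)$ for some $t\in\bR$. The requirement $\bar\iota\in\tilde\cF^\s(\bar\eta)$ then places $\gamma_{\bar\eta}(t)\in H^+(\bar\eta)=(b^+_{\bar\eta})^{-1}(0)$, and because $b^+_{\bar\eta}(\gamma_{\bar\eta}(t))=-t$ this forces $t=0$, so $\bar\iota=\bar\eta$ and $\iota=\eta$. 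Thus $\chi|_U$ is injective. Openness is then immediate: for every open $V\subset U$ the saturation $\chi^{-1}(\chi(V))=\bigcup_{\eta\in V}[\eta]=V$ is open in $T^1M$, so $\chi(V)$ is open in $X$ by the definition of the quotient topology. Combining continuity, injectivity, and openness yields a homeomorphism $\chi|_U\to\chi(U)$, hence the desired local homeomorphism. The only delicate point is the passage from the downstairs condition $\eta\in\cR_0$ to triviality of $\bar\cI(\bar\eta)$ upstairs, handled by the arc-projection argument above.
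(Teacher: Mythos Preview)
Your argument is correct and is precisely the natural unpacking of what the paper deems ``immediate'' (no proof is given there). The essential point is that for $\eta\in\cR_1\subset\cR_0$ the equivalence class $[\eta]$ reduces to $\{\eta\}$, so $\chi$ is injective on $U$ and, since the saturation of any open $V\subset U$ equals $V$ itself, $\chi$ is also open; together with continuity this yields the homeomorphism onto $\chi(U)$. Your ``delicate point'' about passing from $\eta\in\cR_0$ downstairs to triviality of $\bar\cI(\bar\eta)$ upstairs is in fact not needed: in the paper $\cR_0$ is \emph{defined} via triviality of the strip $S(\bar\theta)$ in $\tilde M$ (equivalently $I(\bar\theta)$ a single point), and the characterization $\cF^\s(\theta)\cap\cF^\u(\theta)=\{\theta\}$ is stated as an equivalent description, so you may invoke triviality upstairs directly.
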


\subsection*{Notations}
The following diagram summarizes our setting (compare \eqref{eq:defmfds} for the definition of the stable and unstable sets for the flows $\Psi$ and $\bar\Psi$).
\begin{equation*}
  \!\begin{aligned}
	M 
&\overset{\pi}{\longleftarrow}
	\tilde M,
	\quad I(\bar\theta)\eqdef H^+(\bar\theta)\cap H^-(\bar\theta) 
\\
\theta=(p,v)\in T^1M 			
&\overset{\bar\pi}{\longleftarrow}	\bar\theta=(p,v)\in T_1\tilde M	
\\
\mathscr F^\ast(\theta) 
&\overset{\bar\pi}{\longleftarrow}
	 \tilde{\mathscr F}^\ast(\bar\theta),	\quad \ast=\s,\u,\cs,\cu
\\	
\cI(\theta)
&\overset{\bar\pi}{\longleftarrow}\bar\cI(\bar\theta)\eqdef\tilde\cF^\s(\bar\theta)			\cap\tilde\cF^\u(\bar\theta)
\\
	\phi_t\colon T^1M \to T^1M		
&	\overset{\bar\pi}{\longleftarrow}	
	\phi_t\colon T_1\tilde M \to T_1\tilde M\\
       	\downarrow \chi
&
	\quad\quad\quad\quad\downarrow \bar\chi 
\\	
        \psi_t\colon X \to X	
&	\overset{\bar\Pi}{\longleftarrow}	
	\bar\psi_t\colon \bar X\to \bar X
\\
W^\ast([\theta]) 		
&\overset{\bar\Pi}{\longleftarrow}	
\tilde W^\ast([\bar\theta]),
	\quad \ast=\ss,\uu,\cs,\cu
  \end{aligned}
\end{equation*}

The following two results establish the essential properties of the quotient space and of the dynamical properties of the quotient flow.

\begin{theorem}\label{teo:quotient}
	Let $(M,g)$ be a compact surface without conjugate points and continuous  stable and unstable Green bundles. Then the quotient space $X$ is a compact topological $3$-manifold. In particular, $X$ admits a smooth $3$-dimensional structure where the quotient flow $\Psi$ is continuous. 
\end{theorem}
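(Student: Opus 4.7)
My plan is to show that $X$ is a compact Hausdorff $3$-manifold via a Moore-type decomposition argument on transverse flow-box sections, and then appeal to Moise's theorem for the smooth structure. Compactness and second countability of $X$ are immediate since $\chi\colon T^1M\to X$ is a continuous surjection out of a compact, second-countable space. For Hausdorffness I would verify that the graph of $\sim$ is closed in $T^1M\times T^1M$: if $(\theta_n,\eta_n)\to(\theta,\eta)$ with $\theta_n\sim\eta_n$, then continuity of $\cF^\s$ (Proposition \ref{hor-continuous}, Theorem \ref{thmy:manifolds}(1)) gives $\eta\in\cF^\s(\theta)$, and the uniform bound $Q$ on Hausdorff distance between bi-asymptotic geodesics (Lemma \ref{strips}) together with the analogous continuity of $\cF^\u$ forces bi-asymptoticity to persist in the limit. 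Since $T^1M$ is compact Hausdorff, closedness of $\sim$ yields Hausdorffness of $X$; the same limit argument also gives upper semicontinuity of the decomposition $\{[\theta]\}_\theta$.

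For a local chart at $[\theta]\in X$, the case $\theta\in\cR_1$ is handled by Lemma \ref{lem:lochom}, so I may assume $\theta$ lies in a nontrivial strip and $[\theta]=\cI(\theta)$ is a compact arc of diameter at most $Q$. The idea is to take a flow box $B\cong\Sigma\times(-\varepsilon,\varepsilon)$ around this arc, where $\Sigma$ is a $2$-disk transverse to the geodesic flow at $\theta$, chosen so large that every strip arc $\cI(\eta)$ meeting a concentric sub-box is contained in the interior of some transverse slice $\Sigma\times\{t\}$. Theorem \ref{thmy:manifolds}(1) is essential: it asserts that each $\cI(\eta)=\cF^\s(\eta)\cap\cF^\u(\eta)$ is a $C^1$ arc whose tangent at every point lies in $G^\s$, which is orthogonal to the flow, so after a mild adjustment (essentially working in Fermi-type coordinates along the flow) these arcs do lie inside transverse slices. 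Flow-invariance of $\sim$ then yields $B/\sim\cong(\Sigma/\sim_\Sigma)\times(-\varepsilon,\varepsilon)$, reducing the problem to understanding $\Sigma/\sim_\Sigma$.

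On $\Sigma$ the induced decomposition $\sim_\Sigma$ is upper semicontinuous, its classes are compact continua (points or arcs of diameter at most $Q$), and none of them separates $\Sigma$ since the disk was chosen much larger than $Q$. R.\,L.\,Moore's classical decomposition theorem for upper semicontinuous decompositions of the $2$-sphere into non-separating continua then gives $\Sigma/\sim_\Sigma\cong\Sigma$ (a $2$-disk), so $B/\sim$ is a $3$-ball and projects homeomorphically to an open neighborhood of $[\theta]$ in $X$. Hence $X$ is a compact topological $3$-manifold; Moise's theorem furnishes a compatible smooth structure on it, and continuity of $\Psi$ is immediate from $\chi\circ\phi_t=\psi_t\circ\chi$ and the universal property of the quotient topology.

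The main obstacle is the flow-box straightening in the second paragraph: promoting the expected product structure $B/\sim\cong(\Sigma/\sim_\Sigma)\times(-\varepsilon,\varepsilon)$ to an honest homeomorphism requires the $C^1$ tangency of $\cF^\s,\cF^\u$ to the Green subbundles from Theorem \ref{thmy:manifolds}(1), the continuity of these foliations, and the uniform width bound from Lemma \ref{strips}. Without these ingredients one would be forced to attack the $3$-dimensional cellular-decomposition problem directly by a subtler shrinking theorem (in the spirit of Armentrout), which is exactly the kind of technical obstacle that the continuity-of-Green-bundles hypothesis is tailored to avoid.
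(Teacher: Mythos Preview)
Your Moore-decomposition strategy is a genuinely different route from the paper's, and in spirit it could work, but there is a real gap at the step ``$B/\sim$ \ldots\ projects homeomorphically to an open neighborhood of $[\theta]$ in $X$.'' The quotient map $\chi$ is \emph{not} open: take $\theta$ in a nontrivial strip and a small ball $U\ni\theta$ not containing all of $\cI(\theta)$; then any $\eta\in\cI(\theta)\setminus U$ lies in $\chi^{-1}(\chi(U))$, yet by density of $\cR_1$ there are rank-one vectors $\eta'$ arbitrarily close to $\eta$ with $[\eta']=\{\eta'\}$ and $\eta'\notin U$, hence $\eta'\notin\chi^{-1}(\chi(U))$. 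Since your flow box $B$ is not saturated (equivalence-class arcs get truncated at $\partial B$), neither is $\chi(B)$ automatically open in $X$, nor does the canonical map $B/{\sim_B}\to\chi(B)$ automatically identify $B/{\sim_B}$ with a subspace of $X$. The same boundary truncation also complicates your appeal to Moore's theorem on the disk $\Sigma$. A repair is possible---pass to the saturated interior $B^\circ=\{\eta\in B:[\eta]\subset B\}$, which is open by the upper semicontinuity you already established, and argue that $\chi(B^\circ)$ embeds as an open subset of your Moore quotient times an interval---but this is more work than your sketch acknowledges, and you have not said why the transverse slices can be taken to be unions of local stable leaves (which is what actually forces each $\cI(\eta)$ into a single slice; orthogonality of $G^\s$ to the flow direction is not enough by itself).

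The paper avoids decomposition-space theory altogether. It builds $\Sigma$ explicitly as a disk foliated by local stable arcs parametrized over a vertical segment, and then uses minimality of $\mathscr F^\s,\mathscr F^\u$ (Theorem~\ref{teo:2drei}) together with Corollary~\ref{cor:transverse} to place lifts of a hyperbolic periodic vector in the four quadrants of $\Sigma$. Since, by that corollary, the entire stable and unstable leaves through such a vector lie in $\cR_1$, the curves $W^\s_\Sigma(\bar\xi^\pm)$ and $W^\u_\Sigma(\bar\eta^\pm)$ meet no nontrivial strip; the open ``rectangle'' $U\subset\Sigma$ they bound is therefore saturated, and the flow box $A=\phi_{(-\tau,\tau)}(U)$ satisfies $\bar\chi^{-1}(\bar\chi(A))=A$ with an explicit homeomorphism $(a,a')\times(b,b')\times(-\tau,\tau)\to\bar\chi(A)$ (Lemma~\ref{lem:recregio}). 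This yields a Euclidean basis for the quotient topology directly, and the same saturated rectangles are reused later to establish the local product structure of $\Psi$---a dividend your Moore-based charts would not provide.
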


The proof of Theorem \ref{teo:quotient} will be sketched in Section \ref{secteo:quotient}.
In the following, let us fix some metric $d$ on $X$ which is induced by a Riemannian metric. We will recall expansiveness and local product structure in Sections \ref{sec:exp} and \ref{sec:locpro}, respectively.

\begin{theorem}\label{teo:quotientflow}
	Let $(M,g)$ be a compact surface without conjugate points and continuous  stable and unstable Green bundles. Then the quotient flow $\Psi$ is expansive, topologically mixing, and has a local product structure.
\end{theorem}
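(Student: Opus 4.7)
The plan is to establish the three assertions in order of increasing difficulty, leveraging Theorem~\ref{teo:quotient} together with the structural results of Sections~\ref{sec:2} and \ref{expansivepoints}.

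\emph{Topological mixing} should be immediate from the fact that $\Psi$ is a continuous time-preserving factor of $\Phi$ via the surjective map $\chi$ (see \eqref{eq:semiconju}), combined with topological mixing of $\Phi$ from Theorem~\ref{teo:2drei}(2). Given nonempty open $U,V\subset X$, the preimages $\chi^{-1}(U),\chi^{-1}(V)\subset T^1M$ are open and nonempty, and mixing of $\Phi$ yields $T>0$ such that $\phi_t(\chi^{-1}(U))\cap\chi^{-1}(V)\ne\emptyset$ for all $t\ge T$; applying $\chi$ gives $\psi_t(U)\cap V\ne\emptyset$.

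\emph{Local product structure.} I would define $W^\s([\theta])\eqdef\chi(\mathscr{F}^\s(\theta))$, $W^\u([\theta])\eqdef\chi(\mathscr{F}^\u(\theta))$, and the thickened versions $W^\cs,W^\cu$. By Theorem~\ref{thmy:manifolds}(1)--(2), $\mathscr{F}^\s$ and $\mathscr{F}^\u$ are continuous foliations by $C^1$ curves, transverse exactly on the open dense invariant set $\cR_1$, and by Lemma~\ref{lem:lochom} the map $\chi$ is a local homeomorphism on $\cR_1$. Hence $W^\s,W^\u$ descend to continuous transverse families in $X$. For nearby points $[\theta],[\eta]\in X$ I would lift to $T_1\tilde M$ and apply the heteroclinic relation of Theorem~\ref{teo:2drei}(3) to get $\tilde{\mathscr{F}}^\cs(\bar\theta)\cap\tilde{\mathscr{F}}^\cu(\bar\eta)\neq\emptyset$ and its symmetric counterpart, then project via $\chi\circ\bar\pi$ to obtain points in $W^\cs([\theta])\cap W^\cu([\eta])$. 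Uniqueness of this bracket is precisely what the equivalence relation buys: distinct intersection points in $T^1M$ that lie in a common strip project to a single point in $X$.

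\emph{Expansiveness} is the main obstacle. Suppose $[\theta],[\eta]\in X$ and there is an orientation-preserving homeomorphism $h\colon\bR\to\bR$ with $h(0)=0$ such that $d(\psi_t([\theta]),\psi_{h(t)}([\eta]))<\alpha$ for all $t\in\bR$. The aim is to deduce $[\eta]=\psi_s([\theta])$ for some small $s$. The strategy, for $\alpha>0$ small (produced by a Lebesgue-number argument exploiting compactness of $X$ from Theorem~\ref{teo:quotient}), is to lift to $T_1\tilde M$ via a compatible choice of deck transformations so that the lifted geodesic-flow orbits remain within a uniformly bounded distance $D=D(\alpha)$ for all time. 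This would force $\gamma_{\bar\theta}$ and an appropriate lift of $\gamma_{\bar\eta}$ to stay at bounded Hausdorff distance in $\tilde M$, hence to be bi-asymptotic. Lemma~\ref{strips} then places $\bar\eta$ in the strip $\bar\cS(\bar\theta)$, and the definition of $\sim$ yields $[\eta]=\psi_s([\theta])$.

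The hard step, which I expect to be the principal obstacle, is this quantitative lifting: converting smallness in the quotient metric $d$ into a uniform bound in $\tilde M$. It will rely on compactness of $X$, continuity of the Green bundles, the uniform strip-diameter bound $Q$ from Lemma~\ref{strips}, and careful treatment of the reparametrization $h$, which one should be able to absorb into a flow-direction shift using the local product structure just established.
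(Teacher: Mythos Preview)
Your arguments for topological mixing and for expansiveness are essentially the paper's: mixing is inherited from the factor relation \eqref{eq:semiconju} and Theorem~\ref{teo:2drei}(2), and expansiveness is reduced to the uniform strip-width bound of Lemma~\ref{strips} exactly as in \cite[Section 5.1]{GelRug:19}. So far so good.

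The real gap is in your treatment of the local product structure, and it concerns not uniqueness of the bracket but its \emph{existence in the right sense}. The paper's definition of local product structure (Section~\ref{sec:locpro}) is dynamical: one must exhibit a point of $\cW^\cs_\varepsilon(\Psi,\psi_\tau(x))\cap\cW^\cu_\varepsilon(\Psi,y)$, where $\cW^\cs_\varepsilon$ consists of points whose \emph{entire forward $\Psi$-orbit} stays $\varepsilon$-close to that of $x$. Your candidate bracket point lies in $\chi(\mathscr F^\s(\theta))=W^{\ss}(x)$, but nothing you have invoked shows that $W^{\ss}(x)\subset\cW^\cs_\varepsilon(\Psi,x)$. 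Under the mere hypothesis of continuous Green bundles (as opposed to no focal points), the norm of stable Green Jacobi fields is not known to be nonincreasing, so asymptotic geodesics in $\tilde M$ are only known to stay a \emph{bounded} distance apart, not an $\varepsilon$-small one. The paper closes this gap with a substantial intermediate result, Proposition~\ref{pro:uniformcontra} (uniform contraction on $W^{\ss}$), whose proof passes through the recurrence argument of Proposition~\ref{prolem:liminf}, the pseudo-convexity Proposition~\ref{lem:3ooo}, and---crucially---uses expansiveness of $\Psi$ already established. Only then does Proposition~\ref{prop:LPS} conclude the local product structure.

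This also means your ordering is inverted. You propose to prove the local product structure first and then feed it into the expansiveness argument (``absorb\ldots\ using the local product structure just established''). In the paper the dependency goes the other way: expansiveness is proved independently, and then it is the key input (via Proposition~\ref{lem:3ooo} and Corollary~\ref{cor:lem3ooo}) that forces the projected horospherical leaves to behave like genuine dynamical stable sets, yielding the local product structure. Without reversing your order and supplying an analogue of Proposition~\ref{pro:uniformcontra}, your local product structure argument does not go through.
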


The proof of Theorem \ref{teo:quotientflow} will be completed in Section \ref{sec:poooof}.

\subsection{Proof of Theorem \ref{teo:quotient}}\label{secteo:quotient}

The proof is analogous to \cite[Theorem 4.3]{GelRug:19}. We only sketch it, indicating the differences.

As in \cite{GelRug:19}, given a vector $\bar\theta\in T_1\tilde M$ and $\varepsilon>0$ and $\delta>0$ sufficiently small, we choose the local cross-section 
\[
	\Sigma_{\bar\theta}(\varepsilon,\delta)
	\eqdef R((r_0^--\varepsilon,r_0^++\varepsilon)\times(-\delta,\delta)),
\]	 
where $R\colon(r_0^--\varepsilon,r_0^++\varepsilon)\times(-\delta,\delta)\to T_1\tilde M$ is the homeomorphism having the properties
\begin{itemize}
\item $R(0,0)=\bar\theta$;
\item $s\mapsto R(0,s)$, $s\in(-\delta,\delta)$, is the arc length parametrization (in the Sasaki metric) of the $\delta$-tubular neighborhood $V_\delta(\bar\theta)$ of $\bar\theta$ in its vertical fiber;
\item $r\mapsto R(r,0)$, $r\in (r_0^--\varepsilon,r_0^++\varepsilon)$, is the arc length parametrization of the $\varepsilon$-tubular neighborhood of $\bar\cI(\bar\theta)$ in $\tilde {\mathscr F}^\s(\bar\theta)$, with $R(r_0^-,0)$ and $R(r_0^+,0)$ being the endpoints of $\bar\cI(\bar\theta)$;
\item for each $s\in(-\delta,\delta)$, $r\mapsto R(r,s)$ is the arc length parametrization of the continuous curve in $\tilde{\mathscr F}^\s(R(0,s))$.
\end{itemize}
This defines a continuously embedded closed two-dimensional disc $\Sigma_{\bar\theta}(\varepsilon,\delta)$ transverse to the geodesic flow, containing $\bar\cI(\bar\theta)$, and foliated by leaves of $\tilde{\mathscr F}^\s$ (compare also Figure \ref{fig:1}). In the following, we will shortly denote this disk by $\Sigma$.
\begin{figure}
\begin{overpic}[scale=.48 
  ]{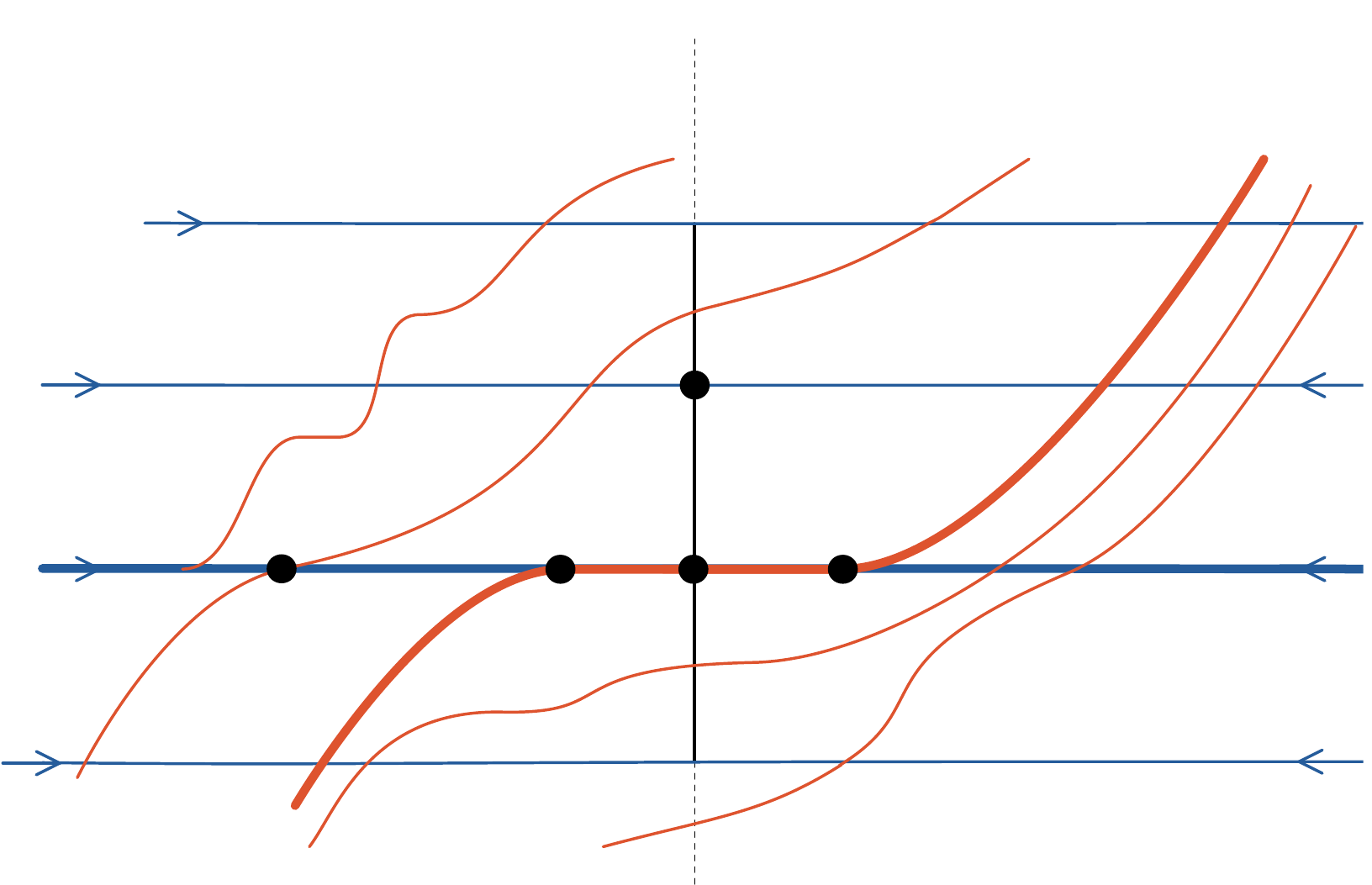}
      \put(35,26){\tiny$R(r_0^-,0)$}
      \put(57,26){\tiny$R(r_0^+,0)$}
      \put(52,39){\tiny$R(0,s)$}
      \put(17,26){\tiny$R(r,0)$}
      \put(-9,22){\tiny$\tilde{\mathscr F^\s}(\bar{\theta})$}
      \put(-18,36){\tiny$\tilde{\mathscr F^\s}(R(0,s))$}
      \put(86,55){\tiny$\tilde{\mathscr F^{\cu}}(\bar{\theta})\cap \Sigma$}
      \put(48,19){\tiny$\bar{\theta}=R(0,0)$}
      \put(51,50){\tiny$V_\delta(\bar\theta)$}
\end{overpic}
\caption{Parametrization of the local cross-section $\Sigma=\Sigma_{\bar\theta}(\varepsilon,\delta)$}
\label{fig:1}
\end{figure}

Given an interval $(a,b)$ and $Y\subset T_1\tilde M$, denote 
\[
	\phi_{(a,b)}(Y)\eqdef\bigcup_{t\in(a,b)}\phi_t(Y).
\]	
For $\tau>0$, consider the open neighborhood of $\bar\theta$ in $T_1\tilde M$ defined by
\[
	B_{\bar\theta}(\varepsilon,\delta,\tau)
	\eqdef \phi_{(-\tau,\tau)}(\Sigma)
\]
and consider  the projection $\Pi_\Sigma\colon B_{\bar\theta}(\varepsilon,\delta,\tau)\to\Sigma$ by the flow $\Phi$. 

Given any strip $\bar\cS$ which intersects $B_{\bar\theta}(\varepsilon,\delta,\tau)$, there exists a vector $\bar\eta\in\Sigma$ such that $\Pi_\Sigma(\bar\cS\cap B_{\bar\theta}(\varepsilon,\delta,\tau))$ is a connected component of $\bar\cI(\bar\eta)$. In particular, if $\bar\cI(\bar\eta)\subset B_{\bar\theta}(\varepsilon,\delta,\tau)$ then $\Pi_\Sigma(\bar\cS)=\bar\cI(\bar\eta)$. 

Given $\bar\eta\in B_{\bar\theta}(\varepsilon,\delta,\tau)$, denote by
\[
	B^\cs_{\bar\eta}(\varepsilon,\delta,\tau)
	\eqdef \tilde{\mathscr F}^\cs(\bar\eta)\cap B_{\bar\theta}(\varepsilon,\delta,\tau),
	\quad
	B^\cu_{\bar\eta}(\varepsilon,\delta,\tau)
	\eqdef \tilde{\mathscr F}^\cu(\bar\eta)\cap B_{\bar\theta}(\varepsilon,\delta,\tau),
\]
the connected components of the intersections of the central stable and the central unstable sets of $\bar\eta$ with $B_{\bar\theta}(\varepsilon,\delta,\tau)$ that contain $\bar\eta$, respectively. Denote
\[
	W^\s_\Sigma(\bar\eta)
	\eqdef \Pi_\Sigma(B_{\bar\eta}^\cs(\varepsilon,\delta,\tau)),
	\quad
	W^\u_\Sigma(\bar\eta)
	\eqdef \Pi_\Sigma(B_{\bar\eta}^\cu(\varepsilon,\delta,\tau)).
\]

Choose $\delta_0=\delta(\bar\theta)>0$ so small that $\tilde\cF^\cu(\bar\theta)$ intersects $ \tilde\cF^\s(R(0,(-\delta_0,\delta_0)))$. Given $\delta\in(0,\delta_0)$, there exists $\varepsilon_0=\varepsilon_0(\bar\theta,\delta)>0$ so that for every $\varepsilon\in(0,\varepsilon_0)$, every $r\in(r_0^--\varepsilon,r_0^-)\cup(r_0^+,r_0^++\varepsilon)$, and every $s\in(-\delta,\delta)$ for points $\bar\eta\eqdef R(0,r)$ and $\bar\xi\eqdef R(0,s)$ the intersection 
\[
	[\bar\xi,\bar\eta]
	\eqdef
	W^\s_\Sigma(\bar\xi)\cap W^\u_\Sigma(\bar\eta)
	\subset \Sigma
\]
is nonempty (though they may be contained in a nontrivial strip).

Let us consider the following open subsets of $\Sigma$ 
\[\begin{split}
	\Sigma^{+,+}
	&\eqdef \{R(r,s)\colon r\in(0,r_0^++\varepsilon),s\in(0,\delta)\},\\
	\Sigma^{+,-}
	&\eqdef \{R(r,s)\colon r\in(0,r_0^++\varepsilon),s\in(-\delta,0)\},\\
	\Sigma^{-,+}
	&\eqdef \{R(r,s)\colon r\in(r_0^--\varepsilon,0),s\in(0,\delta)\},\\
	\Sigma^{-,-}
	&\eqdef \{R(r,s)\colon r\in(r_0^--\varepsilon,0),s\in(-\delta,0)\}.
\end{split}\]
By Corollary \ref{cor:transverse}, given a hyperbolic periodic $\eta\in T^1M$ and  any lift $\bar\eta$, then the leaves $\tilde\cF^\s(\bar\eta)$ and $\tilde\cF^\u(\bar\eta)$ do not intersect any nontrivial strip. By invariance, $\tilde\cF^\cs(\eta)$ and $\tilde\cF^\cu(\eta)$ also do not contain a nontrivial strip. By Theorem \ref{teo:2drei}, the foliations are minimal and hence, in particular, the leaf $\cF^\s(\eta)$ and the leaf $\cF^\u(\theta)$ both are dense in $T^1M$. Therefore  there exist lifts of $\eta$ to $T_1\tilde M$ whose center stable set intersects the sets $\Sigma^{+,+}$ and $\Sigma^{-,-}$ in points $\bar\xi^+$ and $\bar\xi^-$, respectively. Analogously, there exists a lift whose center unstable set intersects $\Sigma^{-,+}$ and $\Sigma^{+,-}$ in points $\bar\eta^+$ and $\bar\eta^-$, respectively.  Note that $\bar\eta^\pm$ and $\bar\xi^\pm$ are expansive points. Moreover the sets $W_\Sigma^\s(\bar\xi^\pm)$ and $W_\Sigma^\u(\bar\eta^\pm)$ are curves which are disjoint from $W_\Sigma^\u(\bar\theta)$. Each of the intersections $[\bar\xi^\pm,\bar\eta^\pm]=W_\Sigma^\s(\bar\xi^\pm) \cap W_\Sigma^\u(\bar\eta^\pm)$ contains a single point and the corresponding arcs bound a region in $\Sigma$ which is homeomorphic to a rectangle whose relative interior contains $\bar\cI(\bar\theta)$. 
\begin{figure}
\begin{overpic}[scale=.45,%
  ]{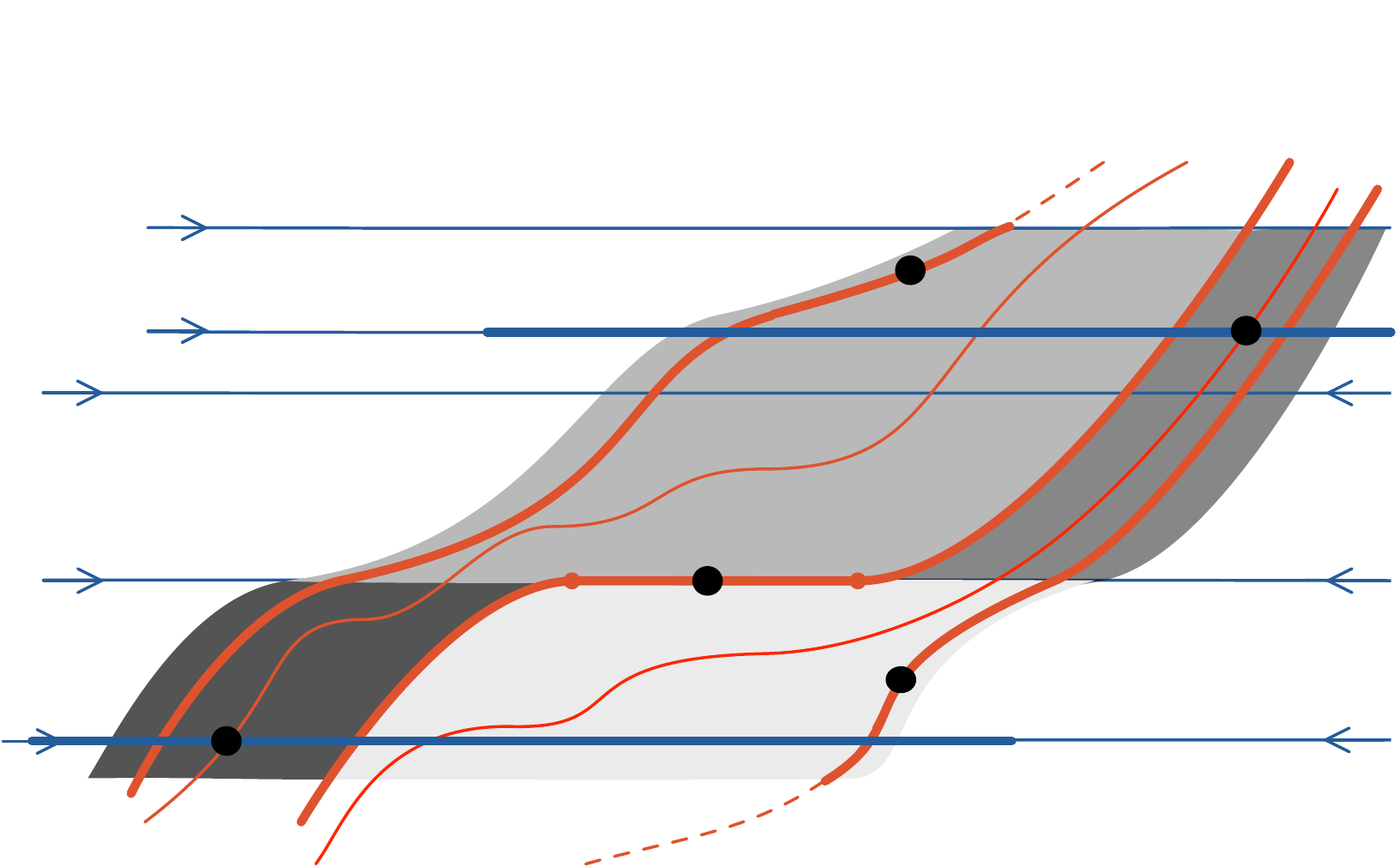}
      \put(5,12){\tiny$\bar\xi^-$}
      \put(67,12){\tiny$\bar\eta^-$}
      \put(97,39){\tiny$\bar\xi^+$}
      \put(-8,2){\tiny\textcolor{red}{$W^\u_\Sigma(\bar\eta^+)$}}
      \put(32,2){\tiny\textcolor{red}{$W^\u_\Sigma(\bar\eta^-)$}}
      \put(10,2){\tiny\textcolor{red}{$W^\u_\Sigma(\bar{\theta})$}}
      \put(-10,20){\tiny\textcolor{blue}{$W^\s_\Sigma(\bar{\theta})$}}
      \put(50,23){\tiny$\bar{\theta}$}
      \put(67,49){\tiny$\bar{\eta}^+$}
      \put(110,30){$\Sigma^{+,+}$}
      \put(-30,30){$\Sigma^{-,+}$}
      \put(-30,13){$\Sigma^{-,-}$}
      \put(110,13){$\Sigma^{+,-}$}
\end{overpic}
\caption{Region defined by expansive points $\bar\eta^\pm$, contained in the region $\Sigma_{\bar{\theta}}(\varepsilon,\delta)$ splits into $\Sigma^+$ and $\Sigma^-$, and containing the open set $U_{\bar\theta}(\varepsilon,\delta,\bar\xi^-,\bar\xi^+,\bar\eta^-,\bar\eta^+)$ (shaded region)}
\label{fig:4}
\end{figure}
Denote by $U=U_{\bar\theta}(\varepsilon,\delta,\bar\xi^-,\bar\xi^+,\bar\eta^-,\bar\eta^+)$ this open region in $\Sigma$ whose boundary is formed by the corresponding arcs contained in $W^\u_\Sigma(\bar\eta^+)$, $W^\s_\Sigma(\bar\xi^+)$, $W^\u_\Sigma(\bar\eta^-)$, and $W^\s_\Sigma(\bar\xi^-)$ (compare Figure \ref{fig:4}).

Following now verbatim arguments in the proofs of \cite[Lemmas 4.6 and 4.7 and Proposition 4.8]{GelRug:19}, we show the following.

\begin{lemma}\label{lem:recregio}
	Given $\bar\theta\in T_1\tilde M$, there exists $\delta_0=\delta_0(\bar\theta)>0$ and for every $\delta\in(0,\delta_0)$ there exists $\varepsilon_0=\varepsilon_0(\bar\theta,\delta)$ so that for every $\varepsilon\in(0,\varepsilon_0)$ there are numbers $\rho^\pm\in(0,\varepsilon)$ and expansive points $\bar\eta^+\in\Sigma^{-,+}$, $\bar\eta^-\in\Sigma^{+,-}$, $\bar\xi^-\in\Sigma^{-,-}$, and $\bar\xi^+\in\Sigma^{+,+}$, where $\Sigma=\Sigma_{\bar\theta}(\varepsilon,\delta)$. Consider the above constructed region $U=U_{\bar\theta}(\varepsilon,\delta,\bar\xi^-,\bar\xi^+,\bar\eta^-,\bar\eta^+)\subset\Sigma$. Then for $\tau>0$ sufficiently small the set
\[
	A
	= A_{\bar\theta}(\tau,\varepsilon,\delta,\bar\xi^-,\bar\xi^+,\bar\eta^-,\bar\eta^+)
	\eqdef \phi_{(-\tau,\tau)}(U)
\]	 
satisfies $\bar\chi^{-1}(\bar\chi(A))=A$.  The collection
\begin{equation}\label{eq:bases}
	\{\bar\chi(A_{\bar\theta}(\tau,\varepsilon,\delta,\bar\xi^-,\bar\xi^+,\bar\eta^-,\bar\eta^+))\}
\end{equation}
provides a basis for the quotient topology of $\bar X$.

Moreover, there exist numbers $a<a'$ and $b<b'$ and a homeomorphism 
\[
	f\colon (a,a')\times (b,b')\times(-\tau,\tau)
	\to \bar\psi_{(-\tau,\tau)}(\bar\chi(U))
	=\bar\chi(A)
\]	
for every $\tau>0$.
In particular, the quotient space $\bar X$ is a topological $3$-manifold.
\end{lemma}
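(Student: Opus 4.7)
The plan is to follow the three-step strategy of \cite[Lemmas 4.6 and 4.7, Proposition 4.8]{GelRug:19}: first establish the saturation identity $\bar\chi^{-1}(\bar\chi(A))=A$, then deduce the basis property for the quotient topology, and finally produce the parametrizing homeomorphism $f$ which endows $\bar X$ with the structure of a topological $3$-manifold. The key preliminary observation is that the four boundary points $\bar\xi^\pm$ and $\bar\eta^\pm$ are lifts of vectors lying on the center-stable and center-unstable leaves of a hyperbolic periodic vector; combining Corollary \ref{cor:transverse} in both the stable and the unstable directions with the flow invariance of $\cR_1$, all these boundary points, together with the full stable (respectively unstable) leaves through them, are contained in $\cR_1\subset\cR_0$ and hence are expansive.

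For the saturation property, fix $\bar\eta\in U\subset\Sigma$. Its equivalence class satisfies $[\bar\eta]\subset\bar\cI(\bar\eta)=\tilde{\mathscr F}^\s(\bar\eta)\cap\tilde{\mathscr F}^\u(\bar\eta)$, which by Corollary \ref{nontransversal} is a continuous simple arc contained in the horizontal stable leaf through $\bar\eta$ in $\Sigma$. If $\bar\cI(\bar\eta)\not\subset U$, then as a connected arc with interior point $\bar\eta\in U$, it must cross the boundary $\partial U$. A crossing on a stable arc $W^\s_\Sigma(\bar\xi^\pm)$ would force $\tilde{\mathscr F}^\s(\bar\eta)=\tilde{\mathscr F}^\s(\bar\xi^\pm)$ (distinct stable leaves are disjoint), and then Corollary \ref{cor:transverse} applied to $\bar\xi^\pm$ would force $\bar\eta\in\cR_1$, collapsing $\bar\cI(\bar\eta)$ to the single point $\bar\eta$, contradicting the crossing assumption. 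A crossing on an unstable arc $W^\u_\Sigma(\bar\eta^\pm)$ would produce a point $\bar\zeta$ lying simultaneously in $\tilde{\mathscr F}^{\cu}(\bar\eta)$ and $\tilde{\mathscr F}^{\cu}(\bar\eta^\pm)$, forcing equality of these leaves, whence the unstable analogue of Corollary \ref{cor:transverse} combined with flow invariance of $\cR_1$ again places $\bar\eta$ in $\cR_1$ and leads to the same contradiction. Therefore $\bar\cI(\bar\eta)\subset U$, and invoking the flow invariance of the equivalence relation expressed by \eqref{eq:semiconju} yields $\bar\chi^{-1}(\bar\chi(A))=A$, provided $\tau$ is small enough for $\phi_{(-\tau,\tau)}$ to act as a tubular embedding of $\Sigma$.

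For the basis property and the manifold structure, the remaining work is essentially topological. Given any open $V\subset\bar X$ and any $[\bar\theta]\in V$, small choices of $\varepsilon,\delta,\tau$ together with density of stable and unstable leaves of hyperbolic periodic vectors (Theorem \ref{teo:2drei}(1)) and openness of $\cR_1$ (Theorem \ref{thmy:manifolds}(3)) supply boundary vectors $\bar\xi^\pm,\bar\eta^\pm$ in the four quadrants $\Sigma^{\pm,\pm}$ arbitrarily close to $\bar\theta$; the saturation makes $\bar\chi(A)$ an open subset of $\bar X$, so \eqref{eq:bases} is a basis. To build the homeomorphism $f$, the restriction $\bar\chi|_U$ collapses each arc $\bar\cI(\bar\zeta)\cap U$ within a horizontal stable leaf to a single point; continuity of these arcs (Corollary \ref{nontransversal}) and continuity of the stable foliation on $\Sigma$ (Proposition \ref{hor-continuous}) produce, via leaf-by-leaf monotone reparametrization, a continuous bijection of $\bar\chi(U)$ onto a rectangle $(a,a')\times(b,b')$, upgraded to a homeomorphism by standard compact-Hausdorff arguments, and the flow direction contributes the third factor $(-\tau,\tau)$ through \eqref{eq:semiconju}. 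The main technical obstacle throughout is the saturation step, since it requires uniformly ruling out every way a strip of bi-asymptotic geodesics might leak through each of the four boundary arcs; once this is in place, the basis property and the chart structure follow from routine topology.
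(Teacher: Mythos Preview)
Your proposal is correct and follows essentially the same approach as the paper, which itself simply refers to \cite[Lemmas 4.6 and 4.7 and Proposition 4.8]{GelRug:19} without reproducing details. Your identification of the saturation step as the crux, and your argument that any strip meeting a boundary arc must lie in the center (un)stable leaf of a hyperbolic periodic lift and hence collapse by Corollary~\ref{cor:transverse}, matches precisely the mechanism sketched in the paragraph preceding the lemma.
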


By the above lemma, every point in $\bar X$ has an open neighborhood which is homeomorphic to an open subset of $\bR^3$. Hence $\bar X$ is a topological $3$-manifold. By \cite{Bin:59,Moi:77}, the space $\bar X$ has a smooth structure which is compatible with the quotient topology. Since its quotient $X$ is locally homeomorphic to $\bar X$, the last assertion also extends to this space. This sketches the proof of the theorem.
\qed

\subsection{Expansiveness}\label{sec:exp}

A continuous flow $\Psi=(\psi_t)_t$ on a compact metric space $X$ is \emph{expansive} if for every $\varepsilon>0$ there exists $\delta>0$ with the property that if $x \in X$ and $y \in X$ is a point for which there exists an increasing homeomorphism $\rho\colon \bR \to \bR$ with $\rho(0)=0$ and for every $t \in {\mathbb R}$ satisfying
\[
 	d( \psi_t(x), \psi_{\rho(t)}(y)) \leq \delta,
\]	
then it holds $y=\psi_{t(y)}(x)$ for some $\lvert t(y) \rvert \le \varepsilon$ (see \cite[Theorem 3]{kn:BW72} for equivalent definitions).
One calls $\varepsilon$ a \emph{constant of expansivity}.

\begin{proposition}\label{pro:expansive}
	The quotient flows $\bar\Psi$ and $\Psi$ both are expansive flows.
\end{proposition}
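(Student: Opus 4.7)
The plan is to argue by contradiction, using the fact that the equivalence relation collapses precisely those orbits which are bi-asymptotic, together with the Morse-type estimate from Lemma \ref{strips} bounding the Hausdorff distance between bi-asymptotic geodesics. I would first prove expansiveness of $\bar\Psi$ on a compact fundamental piece of $\bar X$, then transfer the conclusion to $\Psi$ on $X$ using that $\bar\Pi\colon\bar X\to X$ is a local homeomorphism and $X$ is compact (equivalently, one can work directly on $X$ and lift via deck isometries).

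Suppose the conclusion fails. Fix $\varepsilon>0$ and assume there exist a sequence $\delta_n\searrow 0$, points $[\bar x_n],[\bar y_n]\in\bar X$, and increasing homeomorphisms $\rho_n\colon\bR\to\bR$ with $\rho_n(0)=0$ such that $d(\bar\psi_t([\bar x_n]),\bar\psi_{\rho_n(t)}([\bar y_n]))\le\delta_n$ for every $t\in\bR$, yet $[\bar y_n]\ne\bar\psi_s([\bar x_n])$ for any $|s|\le\varepsilon$. Using the explicit basis for the quotient topology given by Lemma \ref{lem:recregio}—each element $\bar\chi(A_{\bar\theta}(\tau,\varepsilon',\delta',\ldots))$ is $\bar\chi$-saturated and homeomorphic to an open box in $\bR^3$ via $f$—I translate quotient closeness into closeness in $T_1\tilde M$: for each $t$ one can select representatives $\bar x_n(t)\in[\bar x_n]$ and $\bar y_n(t)\in[\bar y_n]$ (i.e.\ points in the same strip) whose Sasaki distance is bounded by some $\delta_n'\to 0$, and such that the time drift $|\rho_n(t)-t|$ is bounded by a quantity $\tau_n\to 0$ coming from the flow-box width along the flow direction.

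Next I project to $T^1M$, apply compactness, and extract limit vectors $x_\infty,y_\infty\in T^1M$. Choosing compatible lifts $\bar x_\infty,\bar y_\infty\in T_1\tilde M$ by composing with suitable deck transformations, the geodesics $\gamma_{\bar x_\infty}$ and $\gamma_{\bar y_\infty}$ have Hausdorff distance in $\tilde M$ bounded by $Q$ (the constant from Lemma \ref{strips}), for each $t\in\bR$: indeed $\bar x_n(t)$ is bi-asymptotic to $\phi_t(\bar x_n)$ within its strip, hence within distance $Q$, similarly for $\bar y_n(t)$, and the Sasaki gap between them tends to $0$. Passing to the limit, $\gamma_{\bar x_\infty}$ and $\gamma_{\bar y_\infty}$ are bi-asymptotic, so $\bar y_\infty\in S(\bar x_\infty)$ and hence $[\bar y_\infty]=\bar\psi_{s_\infty}([\bar x_\infty])$ for some $|s_\infty|\le\varepsilon/2$ (the bound coming from $\tau_n\to 0$). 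Continuity of $\bar\chi$ then forces $[\bar y_n]$ to be within the $\varepsilon$-orbit neighborhood of $[\bar x_n]$ for large $n$, contradicting the assumption. The statement for $\Psi$ follows either by the same argument applied to representatives in $T^1M$ and lifts to $T_1\tilde M$, or by a direct passage through $\bar\Pi$.

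The step I expect to be the main obstacle is the rigorous control of the reparametrization $\rho_n$: one needs that staying $\delta_n$-close in $\bar X$ under $\rho_n$ forces $\rho_n(t)-t$ to be small on compact time intervals, uniformly in $n$. This is where the explicit flow-box charts of Lemma \ref{lem:recregio} are essential—the canonical flow coordinate inside a chart pins down time up to $\tau$—and also where one must beware that $\bar X$ is noncompact, which is why I first localize via deck translates to a fundamental domain before extracting limits. A secondary technical point is to verify that the sequence of representatives $\bar x_n(t),\bar y_n(t)$ can be chosen measurably or at least along a dense countable set of times, so that the limiting bi-asymptotic statement makes sense for all $t\in\bR$; this follows from continuity of the horospherical foliations granted by Theorem \ref{thmy:manifolds}(1).
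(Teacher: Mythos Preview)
Your plan identifies the same key ingredient the paper relies on: the uniform bound $Q$ on strip width from Lemma~\ref{strips}. The paper itself does not give a detailed argument but defers to \cite[Section~5.1]{GelRug:19}, noting only that this Morse-type bound is what carries over to the present setting. So at the level of strategy you are aligned with the paper.

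There is, however, a gap in your contradiction scheme as written. You extract limits $\bar x_\infty,\bar y_\infty$, show their lifted geodesics are bi-asymptotic, and conclude $[\bar y_\infty]=\bar\psi_{s_\infty}([\bar x_\infty])$. But the hypothesis you must contradict is that $[\bar y_n]\ne\bar\psi_s([\bar x_n])$ for every $|s|\le\varepsilon$, and knowing the limit lies on the orbit does not force $[\bar y_n]$ to lie on the orbit of $[\bar x_n]$ for large $n$---closeness to an orbit is not the same as membership in it, and ``continuity of $\bar\chi$'' does not bridge that gap. The repair is to drop the limiting step altogether: your own estimate---that representatives of $\bar\psi_t([\bar x_n])$ and $\bar\psi_{\rho_n(t)}([\bar y_n])$ lie within distance $2Q+\delta_n'$ in $\tilde M$ for all $t$---already shows that $\gamma_{\bar x_n}$ and $\gamma_{\bar y_n}$ are bi-asymptotic for each fixed $n$, hence $[\bar y_n]=\bar\psi_{s_n}([\bar x_n])$ directly. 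The flow-box charts of Lemma~\ref{lem:recregio} then bound $|s_n|$. In fact this makes the whole argument direct rather than by contradiction: for $\delta$ small enough that the charts apply, $\delta$-closeness for all time forces bi-asymptoticity and hence coincidence of classes up to a time shift controlled by the chart width $\tau$. This is presumably how \cite{GelRug:19} proceeds as well.
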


The proof of this proposition goes verbatim to the arguments in \cite[Section 5.1]{GelRug:19}.
 Indeed, observe that the key argument used in \cite{GelRug:19} is that the width of any strip is bounded from above, which is a consequence of the fact that any two bi-asymptotic geodesics in $\tilde M$ stay in uniformly bounded Hausdorff distance from each other. But this latter fact continues to hold true for compact Riemannian surfaces without conjugate points and of genus greater than one (compare Lemma \ref{strips}).

\subsection{Dynamics on stable and unstable sets}

In this section we will study the quotients of the stable and unstable manifolds. 

Given $\theta\in T^1M$ and one of its lifts $\bar\theta\in T_1\tilde M$, define 
\begin{equation}\label{eq:defmfds}\begin{split}
	\tilde W^\ast([\bar\theta])
	&\eqdef\bar\chi(\tilde\cF^\ast(\bar\theta)),
	\quad
	 W^\ast([\theta])
	\eqdef\chi(\cF^\ast(\theta)),
	\quad \ast\in\{\cs,\cu\}\\
	\tilde W^\ss([\bar\theta])
	&\eqdef\bar\chi(\tilde\cF^\s(\bar\theta)),
	\quad
	 W^\ss([\theta])
	\eqdef\chi(\cF^\s(\theta))\\
	\tilde W^\uu([\bar\theta])
	&\eqdef\bar\chi(\tilde\cF^\u(\bar\theta)),
	\quad
	 W^\uu([\theta])
	\eqdef\chi(\cF^\u(\theta)).
\end{split}\end{equation}
The following relations are immediate consequences of the semi-conjugation \eqref{eq:semiconju} and the definitions in \eqref{eq:defmfds} 
\[
	\tilde W^\cs([\bar\theta])
	\eqdef \bigcup_{t\in\bR}\bar\psi_t(\tilde W^\ss([\bar\theta]))
	= \chi(\tilde\cF^\cs(\bar\theta)),
\]
analogously for $\tilde W^\cs([\bar\theta])$. 
Moreover, for $x\eqdef \chi(\theta)=\bar\Pi([\bar\theta])$ it holds
\[
	W^\cs(x)
	\eqdef  \bigcup_{t\in\bR}\psi_t(W^\ss(x))
	= \bar\Pi(\tilde W^\cs([\bar\theta])),
\]
analogously for $W^\cu(x)$.

The  proof of the following result will be completed at the end of this section. In this section we only investigate stable sets, the analogous results hold true for unstable sets.

\begin{proposition}[Uniform contraction in $W^\ss$]\label{pro:uniformcontra}
	For every $D>0$, for every $a>0$ there exists $T>0$ such that  for every $\theta$ for $x\in X$ for all $t\ge T$ it holds that
\[
	d(\psi_t(y),\psi_t(x))\le a
	\quad\text{ for every }y\in W^{\ss}(x)\text{ with }d(y,x)\le D.
\]	
\end{proposition}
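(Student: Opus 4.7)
The plan is to argue by contradiction, following the compactness–lifting strategy from the proof of Proposition \ref{prolem:liminf}. Negating the conclusion gives constants $D,a>0$, sequences $x_n\in X$ and $y_n\in W^\ss(x_n)$ with $d(x_n,y_n)\le D$, and times $t_n\to+\infty$ with $d(\psi_{t_n}(x_n),\psi_{t_n}(y_n))\ge a$. Using compactness of $X$, I extract subsequences so that $x_n\to x_\infty$, $y_n\to y_\infty$ and $\psi_{t_n}(x_n)\to x^\ast$, $\psi_{t_n}(y_n)\to y^\ast$, with $d(x_\infty,y_\infty)\le D$ and $d(x^\ast,y^\ast)\ge a$. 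By continuity of $\chi$ and of the foliation $\cF^\s$, I lift to subsequences in $T^1M$: vectors $\theta_n\to\theta_\infty$ and $\eta_n\to\eta_\infty$ with $\eta_n\in\cF^\s(\theta_n)$, $\chi(\theta_n)=x_n$, and $\chi(\eta_n)=y_n$; and, after a further subsequence, $\phi_{t_n}(\theta_n)\to\theta^\ast$ and $\phi_{t_n}(\eta_n)\to\eta^\ast$ with $\eta^\ast\in\cF^\s(\theta^\ast)$, $\chi(\theta^\ast)=x^\ast$, $\chi(\eta^\ast)=y^\ast$.

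Next I move to $T_1\tilde M$. Pick lifts $\bar\theta_n,\bar\eta_n\in T_1\tilde M$ of $\theta_n,\eta_n$ with $\bar\eta_n\in\tilde\cF^\s(\bar\theta_n)$ and base points in a common compact fundamental domain $F\subset\tilde M$. Exactly as in the proof of Proposition \ref{prolem:liminf}, pick covering isometries $T_n\colon\tilde M\to\tilde M$ so that the translated vectors
\[
	\bar\theta_n^\ast\eqdef\big(T_n(\gamma_{\bar\theta_n}(t_n)),DT_n(\dot\gamma_{\bar\theta_n}(t_n))\big),\quad
	\bar\eta_n^\ast\eqdef\big(T_n(\gamma_{\bar\eta_n}(t_n)),DT_n(\dot\gamma_{\bar\eta_n}(t_n))\big)
\]
converge, along a subsequence, to limits $\bar\theta^\ast,\bar\eta^\ast\in T_1\tilde M$ projecting to $\theta^\ast,\eta^\ast$, respectively. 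By continuity of $\tilde\cF^\s$, $\bar\eta^\ast\in\tilde\cF^\s(\bar\theta^\ast)$, so the geodesics $\gamma_{\bar\theta^\ast}$ and $\gamma_{\bar\eta^\ast}$ are forward asymptotic. Since $d(x^\ast,y^\ast)\ge a>0$, the projections $\bar\pi(\bar\theta^\ast)$ and $\bar\pi(\bar\eta^\ast)$ belong to distinct equivalence classes of $\sim$, so these two geodesics are \emph{not} bi-asymptotic.

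The contradiction will come from showing they actually are bi-asymptotic. For $s\in[-t_n,0]$, since $T_n$ is an isometry,
\[
	d_{\tilde g}\big(\gamma_{\bar\theta_n^\ast}(s),\gamma_{\bar\eta_n^\ast}(s)\big)
	=d_{\tilde g}\big(\gamma_{\bar\theta_n}(t_n+s),\gamma_{\bar\eta_n}(t_n+s)\big),
\]
which is the forward distance at time $t_n+s\ge 0$ between the forward asymptotic geodesics $\gamma_{\bar\theta_n}$ and $\gamma_{\bar\eta_n}$. The hard part of the argument, and what I expect to be the technical core, is to establish a uniform boundedness statement: there is a constant $C>0$ (depending only on the compact region in which the $\bar\theta_n,\bar\eta_n$ lie) such that $d_{\tilde g}(\gamma_{\bar\theta_n}(t),\gamma_{\bar\eta_n}(t))\le C$ for every $n$ and every $t\ge 0$. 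I expect this to follow from continuity of the horospherical foliation $\tilde\cF^\s$ (Proposition \ref{hor-continuous}), the $C^1$ regularity and tangency to the Green bundles of its leaves (Theorem \ref{thmy:manifolds}(1)), and a compactness argument over the space of forward asymptotic pairs with base points in $F$.

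Once the uniform bound is secured, for any fixed $s\le 0$ and $n$ large enough that $t_n+s\ge 0$ one obtains $d_{\tilde g}(\gamma_{\bar\theta_n^\ast}(s),\gamma_{\bar\eta_n^\ast}(s))\le C$, and passing to the limit yields $d_{\tilde g}(\gamma_{\bar\theta^\ast}(s),\gamma_{\bar\eta^\ast}(s))\le C$ for every $s\le 0$. Combined with the forward asymptoticity from $\bar\eta^\ast\in\tilde\cF^\s(\bar\theta^\ast)$, this proves that $\gamma_{\bar\theta^\ast}$ and $\gamma_{\bar\eta^\ast}$ are bi-asymptotic, hence $\bar\pi(\bar\theta^\ast)\sim\bar\pi(\bar\eta^\ast)$ by definition of $\sim$. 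Therefore $x^\ast=\chi(\bar\pi(\bar\theta^\ast))=\chi(\bar\pi(\bar\eta^\ast))=y^\ast$, contradicting $d(x^\ast,y^\ast)\ge a>0$, which completes the argument.
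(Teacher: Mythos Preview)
Your approach has a genuine gap at exactly the point you identify as the ``technical core'': the uniform bound $d_{\tilde g}(\gamma_{\bar\theta_n}(t),\gamma_{\bar\eta_n}(t))\le C$ for all $n$ and $t\ge 0$. Under the paper's hypotheses this bound is \emph{not} known to hold. What you are asking for is essentially the bounded asymptote condition (a uniform bound on stable Green Jacobi fields), which is strictly stronger than the continuous Green bundles hypothesis assumed here; the paper stresses this point explicitly in the introduction and after Theorem~\ref{thmy:manifolds}. Continuity of the horospherical foliation and $C^1$ regularity of leaves control the geometry \emph{transversally at time $0$}, but say nothing about how $d_{\tilde g}(\gamma_{\bar\theta}(t),\gamma_{\bar\eta}(t))$ behaves for large $t$. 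Without this bound you cannot even extract a convergent subsequence of the translated vectors $\bar\eta_n^\ast$ in $T_1\tilde M$, since their distance to $\bar\theta_n^\ast$ is precisely $d_{\tilde g}(\gamma_{\bar\theta_n}(t_n),\gamma_{\bar\eta_n}(t_n))$ plus derivative terms; so the limiting bi-asymptotic pair you want never materializes.

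The paper's proof avoids this obstacle by working entirely in the quotient $X$ and exploiting that the quotient flow $\Psi$ is already known to be \emph{expansive} (Proposition~\ref{pro:expansive}). The argument is layered: first Lemma~\ref{lem:2ooo} (a consequence of Proposition~\ref{prolem:liminf}) gives $\liminf_{t\to\infty}d(\psi_t(y),\psi_t(x))=0$ for $x=\chi(\theta)$ with $\theta\in\cR_1$ forward recurrent; then Proposition~\ref{lem:3ooo} (pseudo-convexity, itself proved by an expansivity-and-limit argument in $X$) upgrades this to a uniform $\varepsilon$-tube control. The proof of Proposition~\ref{pro:uniformcontra} then runs a contradiction argument \emph{in $X$}: one produces limit points $z_\infty,w_\infty\in X$ with $w_\infty\in W^{\ss}(z_\infty)$ whose $\Psi$-orbits stay $\varepsilon$-close for all time but are at distance $\ge a$, violating expansivity. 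Finally the statement for arbitrary $x$ follows by approximating with forward-recurrent $\cR_1$ points using density (Theorem~\ref{thmy:manifolds}(3)) and uniform continuity. The key conceptual point is that expansivity of $\Psi$ is what replaces the unavailable bounded-asymptote control in $\tilde M$.
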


Besides the fact that the quotient flow is expansive, Proposition \ref{prolem:liminf} will be a key fact towards proving Proposition \ref{pro:uniformcontra}.
Let us introduce some more notation. Let $\theta\in T^1M$ and $x\eqdef\chi(\theta)$. 
Let 
\[
	c^{\s}_\theta\colon (-\infty,\infty)\to \cF^{\s}(\theta)
\]
be the parametrization of $\cF^\s(\theta)$ by arc length  satisfying $c^\s_\theta(0)=\theta$. Let 
\[
	\cF^{\s}_D(\theta)
	\eqdef c^{\s}_\theta([-D,D]).
\]
Define, as in \eqref{eq:defmfds}, 
\[
	W^{\ss}(x)
	\eqdef \chi(\cF^{\s}(\theta)),\quad\text{ and let }\quad
	W^{\ss}_D(x)
	\eqdef \chi(\cF^{\s}_D(\theta)).
\]
Note that the sets $\cF^{\s}_D(\theta)$ are compact and depend continuously on $\theta\in T^1M$ and $D>0$. By continuity of the quotient map, their quotients $W^{\ss}_D(x)$ also depend continuously on $x=\chi(\theta)$ and $D$. 

Notice that if $\cF^{\s}_D(\theta)$ is contained in a nontrivial equivalence class, then its quotient  collapses to the single point 
\[
	\chi(\cF^{\s}_D(\theta))
	= \{\chi(\theta)\}.
\]
So \emph{a priori} the geometry of such quotient curves can be quite singular. 

Let us argue about the differentiable nature of the above defined foliations. 

\begin{lemma}
	For every $\theta\in\cR_1$, there exists $D>0$ such that $W^\ss_D(\chi(\theta))$ is smooth in $X$. Moreover, for every $\theta\in\cR_1$ that is forward recurrent (with respect to the geodesic flow), $W^\ss(\chi(\theta))$ is smooth everywhere.	\end{lemma}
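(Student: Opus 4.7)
The plan is to deduce smoothness of $W^\ss_D(\chi(\theta))$ from the $C^1$ regularity of the stable foliation $\cF^\s$ on $T^1M$, using that on $\cR_1$ the quotient map $\chi$ performs no nontrivial identifications. First, given $\theta\in \cR_1$, by Theorem \ref{thmy:manifolds}(2) one has $\cF^\s(\theta)\cap\cF^\u(\theta)=\{\theta\}$, so $\cR_1\subset \cR_0$ and in particular every equivalence class inside $\cR_1$ is trivial. By Theorem \ref{thmy:manifolds}(3) the set $\cR_1$ is open, and Lemma \ref{lem:lochom} furnishes an open neighborhood $U\subset\cR_1$ of $\theta$ on which $\chi|_U$ is a homeomorphism onto $\chi(U)$. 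Since by Theorem \ref{thmy:manifolds}(1) the leaf $\cF^\s(\theta)$ is a $C^1$ curve depending continuously on its base point, we can choose $D>0$ small enough that $\cF^\s_D(\theta)\subset U$.

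Next I would identify the local smooth structure of $X$ near $\chi(\theta)$ with the one inherited from $T^1M$ via $\chi|_U$. For $\tau>0$ small, Lemma \ref{lem:recregio} provides charts $f\colon (a,a')\times(b,b')\times(-\tau,\tau)\to \bar\chi(A)$ in which the first two factors parametrize (lifts of) the stable and unstable leaves through $\bar\theta$ and the third is flow time. On the region $U$ (contained in $\cR_1$) both foliations $\cF^\s$ and $\cF^\u$ are transverse $C^1$ foliations and the equivalence relation is trivial, so pulling these charts back through $\chi$ yields coordinates of class $C^1$ compatible with the smooth structure on $X$ in which $\cF^\s_D(\theta)$ is the first coordinate axis. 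Therefore $W^\ss_D(\chi(\theta))=\chi(\cF^\s_D(\theta))$ is a smooth arc.

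For the global statement, suppose $\theta\in\cR_1$ is forward recurrent under the geodesic flow. By Corollary \ref{cor:transverse}, the whole leaf $\cF^\s(\theta)$ is contained in $\cR_1$. Consequently, for every $\eta\in\cF^\s(\theta)$ the first part applies at $\eta$ and furnishes a smooth neighborhood of $\chi(\eta)$ in $W^\ss(\chi(\theta))=W^\ss(\chi(\eta))$. Since smoothness is a local property, $W^\ss(\chi(\theta))$ is smooth everywhere.

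The main obstacle is the second step: verifying that the abstract smooth structure on $X$ produced in Theorem \ref{teo:quotient} (via \cite{Bin:59,Moi:77}) can be chosen to be $C^1$-compatible, on the open dense set $\chi(\cR_1)$, with the $C^1$ foliations of $T^1M$. Once one observes that the charts in Lemma \ref{lem:recregio} are built from cross-sections transverse to $\Phi$ with coordinates along $\cF^\s$ and $\cF^\u$, and that no identifications happen on $\cR_1$, this compatibility is essentially tautological; the remaining bookkeeping is to check that the change-of-coordinate maps between such charts are $C^1$ where they overlap, which follows from the $C^1$ regularity of the Busemann functions and the continuity of the Green bundles.
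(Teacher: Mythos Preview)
Your proposal is correct and follows the same core idea as the paper: use that $\chi$ is a local homeomorphism on the open set $\cR_1$ to transport the $C^1$ regularity of $\cF^\s_D(\theta)$ to $W^\ss_D(\chi(\theta))$, and then (for forward recurrent $\theta$) use that the entire leaf $\cF^\s(\theta)$ lies in $\cR_1$.

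There are two minor differences worth noting. For the first part, the paper handles the smooth-structure issue more directly than you do: it simply declares $\chi\circ\varphi$ (with $\varphi$ a chart for $T^1M$) to be a chart for $X$ near $\chi(\theta)$, whence $(\chi\circ\varphi)^{-1}\circ(\chi\circ c^\s_\theta)=\varphi^{-1}\circ c^\s_\theta$ is $C^1$ by Theorem~\ref{thmy:manifolds}(1). Your detour through the product charts of Lemma~\ref{lem:recregio} and the claim that their transitions are $C^1$ is unnecessary and in fact delicate, since the foliations $\cF^\s,\cF^\u$ are only \emph{continuous} foliations by $C^1$ leaves, so holonomies need not be $C^1$. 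The paper sidesteps this entirely. For the second part, your use of Corollary~\ref{cor:transverse} (which already gives $\cF^\s(\theta)\subset\cR_1$) is slightly cleaner than the paper's argument, which reproves this inclusion via a contradiction argument invoking Proposition~\ref{prolem:liminf}; the content is the same.
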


\begin{proof}
	By Theorem \ref{teo:quotient}, the quotient space $X$ admits a differentiable structure and that we denoted by $d$ a distance induced by a Riemannian metric. By Theorem \ref{thmy:manifolds} (3), $\cR_1$ is an open subset of $T^1M$. By Lemma \ref{lem:lochom}  there exists an open neighborhood $V\subset\cR_1$ containing  $\theta$ so that  $\chi|_V\colon V\to\chi(V)$ is a homeomorphism. 
	Let $\varphi\colon U\to V$, $U\subset\bR^3$ open, be some local chart for $T^1M$. By Theorem \ref{teo:quotient} (1), the set $\cF^\s(\theta)$ is a $C^1$-curve embedded in $T^1M$. Choose $D>0$ such that $\cF^\s_D(\theta)\subset V$ and consider its parametrization $c^\s_\theta\colon[-D,D]\to\cF^\s_D(\theta)$ by arc length. Then $\chi\circ c^\s_D\colon [-D,D]\to \chi(\cF^\s_D(\theta))=W^\ss_D(x)$ is smooth and parametrizes $W^\ss(x)\cap\chi(V)$. Indeed, $\chi\circ\varphi\colon U\to\chi(V)$ is a local chart for $\chi(V)\subset X$ and it holds $(\chi\circ\varphi)^{-1}\circ(\chi\circ c)=\varphi^{-1}\circ c$.
	
	Finally, let $x\eqdef\chi(\theta)$ for $\theta\in\cR_1$ be forward recurrent. By contradiction, suppose that there exists a point $y\in W^\ss(x)$ of nondifferentiability. By the first claim, $y\ne x$. Let $\eta\in\chi^{-1}(y)$ and hence $\eta\in \cF^\s(\theta)$. By hypothesis, the orbit of $\theta$ recurs infinitely often to any neighborhood of $\theta$. Moreover, by Proposition \ref{prolem:liminf}, the Sasaki distance between the images of $\eta$ and $\theta$ under these recurrence time-flow maps converges to zero. Hence, as $\cR_1$ was open, for sufficiently large $t>0$, it holds $\phi_t(\eta)\in \cR_1$. As $\cR_1$ is invariant, it follows $\eta\in\cR_1$. But this gives a contradiction with the first part of the proof.
\end{proof}	  

\begin{lemma}\label{lem:2ooo}
	Let $\theta\in\cR_1$ be forward recurrent (with respect to the geodesic flow) and $x\eqdef \chi(\theta)$. Then for every $y\in W^\ss(x)$ it holds $\liminf_{t\to\infty}d(\psi_t(y),\psi_t(x))=0$.
\end{lemma}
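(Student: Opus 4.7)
The proof is a direct application of Proposition \ref{prolem:liminf} combined with the semi-conjugacy \eqref{eq:semiconju} and continuity of the quotient map $\chi$. The statement in Proposition \ref{prolem:liminf} is essentially the ``upstairs'' version of what we want to prove, so our task is to transport it to the quotient.

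My plan is as follows. First, unpack what it means for $y$ to lie in $W^{\ss}(x)$: by the definition in \eqref{eq:defmfds}, $W^{\ss}(x) = \chi(\cF^\s(\theta))$, so there exists $\eta \in \cF^\s(\theta)$ with $\chi(\eta) = y$. Since $\theta \in \cR_1$ is forward recurrent under the geodesic flow, Proposition \ref{prolem:liminf} applies and yields a sequence $t_n \to \infty$ such that
\[
	\lim_{n\to\infty}d_\Sak\bigl(\phi_{t_n}(\eta),\phi_{t_n}(\theta)\bigr)=0
	\quad\text{ and }\quad
	\phi_{t_n}(\theta)\to\theta.
\]
Combining the two displayed limits via the triangle inequality in $(T^1M,d_\Sak)$ gives $\phi_{t_n}(\eta)\to\theta$ as well.

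Second, push these convergences down to $X$. The quotient map $\chi\colon T^1M\to X$ is continuous by the very definition of the quotient topology, and the metric $d$ on $X$ fixed just before Theorem \ref{teo:quotientflow} induces this topology. Therefore $\chi(\phi_{t_n}(\theta))\to\chi(\theta)=x$ and $\chi(\phi_{t_n}(\eta))\to\chi(\theta)=x$ in $(X,d)$. Using the semi-conjugacy relation \eqref{eq:semiconju} we rewrite these as $\psi_{t_n}(x)\to x$ and $\psi_{t_n}(y)\to x$, so by the triangle inequality
\[
	d\bigl(\psi_{t_n}(y),\psi_{t_n}(x)\bigr)\longrightarrow 0
	\quad\text{ as }n\to\infty,
\]
which gives $\liminf_{t\to\infty}d(\psi_t(y),\psi_t(x))=0$, as required.

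There is essentially no obstacle here beyond being careful about which side of the semi-conjugacy we are working on: Proposition \ref{prolem:liminf} provides convergence only along a subsequence (not full convergence as $t\to\infty$), which is precisely why the lemma is phrased with $\liminf$ rather than $\lim$. Note that the argument does not require choosing a lift $\bar\theta\in T_1\tilde M$ or working upstairs in $T_1\tilde M$, since Proposition \ref{prolem:liminf} is already stated on $T^1M$.
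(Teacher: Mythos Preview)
Your proof is correct and follows essentially the same approach as the paper's: apply Proposition \ref{prolem:liminf} to obtain convergence in $T^1M$ along a subsequence, then push this down to $X$ via continuity of $\chi$ and the semi-conjugacy \eqref{eq:semiconju}. The paper's proof additionally invokes Lemma \ref{lem:lochom} (the local homeomorphism property of $\chi$ near $\theta\in\cR_1$) to transfer the convergence, but as your argument shows, mere continuity of $\chi$---together with the fact that both $\phi_{t_n}(\theta)$ and $\phi_{t_n}(\eta)$ converge to the same point $\theta$---already suffices.
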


\begin{proof}
	Recall that, by Lemma \ref{lem:lochom}, there exists an open neighborhood $U=U(\theta)$ of $\theta$ such that $\chi|_U\colon U\to\chi(U)$ is a homeomorphism, considering the Sasaki distance $d_\Sak$ in $U$ and the metric $d$ in $\chi(U)$, respectively. Then $\chi(U)$ is an open set. 
 Let $\eta\in \cF^\s(\theta)$ such that $\chi(\eta)=y$. By Proposition \ref{prolem:liminf}, there exists a sequence $t_n\to\infty$ such that $\phi_{t_n}(\theta)\in U$ and $\phi_{t_n}(\theta)\to\theta$ and $d_\Sak(\phi_{t_n}(\xi),\phi_{t_n}(\theta))\to 0$ as $n\to\infty$. 
This implies the claim.
 \end{proof}

The proof of the following result uses strongly the expansivity of the quotient flow.

\begin{proposition}[Pseudo-convexity of orbits]\label{lem:3ooo}
	For every $\varepsilon>0$ there exists $\delta>0$ such that for every $x\in X$,	every $t\ge0$, and every $y\in W^{\ss}(x)$ satisfying
\[
	\max\big\{d(y,x),d(\psi_t(y),\psi_t(x))\big\}\le\delta
\]	
it holds $d(\psi_s(y),\psi_s(x))\le\varepsilon$ for all $s\in[0,t]$. 
\end{proposition}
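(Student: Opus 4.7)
The plan is an argument by contradiction using compactness of $X$ and the expansivity of $\Psi$ (Proposition \ref{pro:expansive}). First I will fix $\varepsilon>0$ smaller than the expansivity constant of $\Psi$ and suppose the conclusion fails. Then there will exist sequences $x_n\in X$, $y_n\in W^{\ss}(x_n)$, $t_n\ge 0$, and $s_n\in[0,t_n]$ with $d(x_n,y_n)\to 0$, $d(\psi_{t_n}x_n,\psi_{t_n}y_n)\to 0$, and $d(\psi_{s_n}x_n,\psi_{s_n}y_n)>\varepsilon$. Applying the intermediate value theorem to the continuous function $s\mapsto d(\psi_s x_n,\psi_s y_n)$, I take $\alpha_n$ (resp.\ $\beta_n$) to be the first (resp.\ last) time in $[0,t_n]$ at which this distance equals $\varepsilon$; then $d(\psi_s x_n,\psi_s y_n)\le \varepsilon$ on $[0,\alpha_n]\cup[\beta_n,t_n]$, with $\alpha_n\le s_n\le \beta_n$.

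Next, a standard compactness-continuity argument forces $\alpha_n\to\infty$ and $t_n-\beta_n\to\infty$: a bounded subsequence of either would combine with $d(x_n,y_n)\to 0$ (resp.\ $d(\psi_{t_n}x_n,\psi_{t_n}y_n)\to 0$) and continuity of the flow to produce a limit where the two orbits coincide yet are at distance $\varepsilon$, a contradiction. Passing to subsequences, I set $(u,v)\eqdef\lim(\psi_{\alpha_n}x_n,\psi_{\alpha_n}y_n)$ and $(u',v')\eqdef\lim(\psi_{\beta_n}x_n,\psi_{\beta_n}y_n)$ with $d(u,v)=d(u',v')=\varepsilon$. Continuity of the stable foliation $\cF^{\s}$ in $T^1M$ (Theorem \ref{thmy:manifolds}(1)) transfers through the quotient map $\chi$ to the foliation $W^{\ss}$ in $X$, giving $v\in W^{\ss}(u)$ and $v'\in W^{\ss}(u')$. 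Taking the limit in the above bounds yields $d(\psi_s u,\psi_s v)\le\varepsilon$ for every $s\le 0$ and $d(\psi_s u',\psi_s v')\le\varepsilon$ for every $s\ge 0$.

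Finally, I pass to a further subsequence so that $\beta_n-\alpha_n\to c\in[0,\infty]$. If $c<\infty$, then $(u',v')=(\psi_c u,\psi_c v)$, and the two half-orbit bounds glue into $d(\psi_s u,\psi_s v)\le\varepsilon$ for all $s\in(-\infty,0]\cup[c,\infty)$, while the bounded intermediate interval $[0,c]$ admits a uniform bound by continuity. Invoking the uniform upper bound $Q$ on the Hausdorff width of strips in $\tilde M$ from Lemma \ref{strips}, one chooses representatives of $u$ and $v$ in $T^1M$ whose Sasaki distance is controlled along the orbit, so that a suitable monotone reparametrization $\rho\colon\bR\to\bR$ brings $d(\psi_s u,\psi_{\rho(s)} v)$ on $[0,c]$ within the expansivity tolerance of $\Psi$; expansivity then forces $v=\psi_{t_0}u$ for $|t_0|$ arbitrarily small. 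But $v\in W^{\ss}(u)$ lies on a stable set transverse to the flow direction in $X$ (Theorem \ref{thmy:manifolds}(2), combined with preservation of transversality under the flow), so $v=u$, contradicting $d(u,v)=\varepsilon>0$. The case $c=\infty$ is handled symmetrically via $(u',v')$: the condition $d(\psi_{t_n-\beta_n}u'_n,\psi_{t_n-\beta_n}v'_n)\to 0$ with $t_n-\beta_n\to\infty$ supplies the missing forward-close behavior, and expansivity again yields a contradiction. The main obstacle is the gluing across the intermediate interval $[0,c]$ in the case $c<\infty$: one must absorb it into a globally monotone reparametrization without violating the expansivity tolerance, which is precisely where the uniform strip-width bound of Lemma \ref{strips} and the transversality of $W^{\ss}$ to orbits (Theorem \ref{thmy:manifolds}(2)) do the essential work.
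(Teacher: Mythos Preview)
Your argument has a genuine gap in both the $c<\infty$ and $c=\infty$ cases. Your choice of $\alpha_n,\beta_n$ as the first and last times at which the distance equals $\varepsilon$ gives no control whatsoever on the interval $(\alpha_n,\beta_n)$: the distance $d(\psi_s x_n,\psi_s y_n)$ there can be arbitrarily large. In the $c<\infty$ case the limit pair $(u,v)$ therefore satisfies $d(\psi_s u,\psi_s v)\le\varepsilon$ only for $s\le 0$ and $s\ge c$, and on $[0,c]$ the distance may exceed any fixed expansivity tolerance. A monotone reparametrization cannot absorb this excursion, and the strip-width bound of Lemma~\ref{strips} does not apply: that bound concerns \emph{bi-asymptotic} geodesics in $\tilde M$, whereas representatives of $u$ and $v$ lie on a common stable horocycle and are only forward-asymptotic (were they bi-asymptotic they would lie in the same equivalence class and hence satisfy $u=v$). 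In the $c=\infty$ case the pairs $(u,v)$ and $(u',v')$ are not related by the flow, so each yields only one-sided closeness ($s\le 0$ for the first, $s\ge 0$ for the second); neither alone contradicts expansivity, and there is no way to splice them.

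The missing idea, which the paper uses, is to apply an intermediate-value argument in the \emph{arc-parameter} direction along the stable leaf rather than in time. One parametrizes by arc length the stable arc $c_n^\s\colon[0,\delta_n]\to\cF^\s(\theta_n)$ joining $\theta_n$ to $\eta_n$ and selects $\varepsilon_n'\in(0,\delta_n]$ so that the intermediate point $c_n^\s(\varepsilon_n')$ satisfies
\[
\sup_{t\in[0,t_n]} d\big(\psi_t(\chi\circ c_n^\s(\varepsilon_n')),\psi_t(x_n)\big)=\varepsilon,
\]
with the supremum attained at some $s_n\in[0,t_n]$. Shifting by $s_n$ and passing to limits then produces $w_\infty\in W^{\ss}(z_\infty)$ with $d(\psi_t w_\infty,\psi_t z_\infty)\le\varepsilon$ for \emph{every} $t\in\bR$ and $d(w_\infty,z_\infty)=\varepsilon$, which contradicts expansivity directly. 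The point is that by shrinking the companion along the connected stable arc one obtains, from the outset, control over the \emph{entire} time window $[0,t_n]$, avoiding the need to glue half-orbits after the fact.
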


\begin{proof}
We argue by contradiction. Suppose that there exist $a>0$, sequences of points $\theta_n\in T^1M$, $x_n\eqdef\chi(\theta_n)$, $\eta_n\in \cF^{\s}(\theta_n)$, and $y_n\eqdef\chi(\eta_n)$, and a sequence of times $t_n\to\infty$ as $n\to\infty$ and $T_n\in(0,t_n)$ such that
\[
	d(y_n,x_n)\le\frac1n
	\quad\text{ and }\quad
	d(\psi_{t_n}(y_n),\psi_{t_n}(x_n))\le\frac1n
\]
and 
\[
	d(\psi_{T_n}(y_n),\psi_{T_n}(x_n))\ge a.
\]
From continuity of the flow $\psi_t$, it follows that $T_n\to\infty$ and $t_n-T_n\to\infty$ as $n\to\infty$.

Let $\varepsilon\in(0,a]$ be an expansivity constant for the flow $\psi_t$. 

Recalling that $W^{\ss}(x_n)=\chi(\cF^{\s}(\theta_n))$, let $c_n^\s\colon[0,\delta_n]\to \cF^{\s}(\theta_n)$ be the continuous curve parametrized by arc length and joining $\theta_n$ and $\eta_n$ such that $\chi\circ c_n^\s(0)=x_n$ and $\chi\circ c_n^\s(\delta_n)=y_n$. 
Let $\varepsilon_n\in(0,\delta_n]$ such that 
\[
	\sup_{r\in[0,\varepsilon_n],t\in[0,t_n]}
	d\big(\psi_t(\chi\circ c_n^\s(r)),\psi_t(x)\big)
	= \varepsilon.
\]
As $\chi$ is continuous and the distance restricted to the arc connected sets $W^{\ss}(x_n)$ is continuous, the above supremum is in fact obtained at some $r=\varepsilon_n'\in(0,\varepsilon_n]$, and there exists a sequence $s_n\in[0,t_n]$ such that 
\[
	d(\psi_{s_n}(\chi\circ c_n^\s(\varepsilon_n')),\psi_{s_n}(x))
	=\varepsilon.
\]
Again by continuity of the flow $\psi_t$, it holds $s_n\to\infty$ and $s_n-t_n\to\infty$ as $n\to\infty$.

Consider now the sequences of points
\[
	z_n
	\eqdef \psi_{s_n}(x_n)
	\quad\text{ and }\quad
	w_n
	\eqdef \psi_{s_n}(\chi\circ c_n^\s(\varepsilon_n')).
\]
Notice that they are quotients by $\chi$ of vectors
\[
	\zeta_n
	\eqdef \phi_{s_n}(\theta_n)
	\quad\text{ and }\quad
	\xi_n
	\eqdef \phi_{s_n}(c_n^\s(\varepsilon_n')),
\]
respectively. Note that 
\begin{itemize}
\item $\xi_n\in\cF^{\s}(\zeta_n)$ (by invariance \eqref{eq:invariance}) and hence
\item $w_n\in \chi(\cF^{\s}(\zeta_n))=W^{\ss}(z_n)$ 
\item 
	$d(\psi_t(w_n),\psi_t(z_n))
	\le \varepsilon$
for every $t\in[-s_n,t_n-s_n]$
\item $d(w_n,z_n)=\varepsilon$.
\end{itemize}
Up to passing to some subsequence, we can assume that these sequences converge to limit points $\zeta_\infty=\lim_n\zeta_n$ and $\xi_\infty=\lim_n\xi_n$ and hence we have limit points $z_\infty=\lim_nz_n$ and $w_\infty=\lim_nw_\infty$, respectively. It follows from the continuity of foliations that $\lim_n\cF^{\s}(\zeta_n)=\cF^{\s}(\zeta_\infty)$ and hence $\xi_\infty\in\cF^{\s}(\zeta_\infty)$.
Thus, we obtain
\begin{itemize}
\item $w_\infty\in W^{\ss}(z_\infty)$; moreover
\item $d(\psi_t(z_\infty),\psi_t(w_\infty))\le\varepsilon$ for all $t\in\bR$ and
\item $d(w_\infty,z_\infty)=\varepsilon$.
\end{itemize}
But this contradicts the fact that the flow $\psi_t$ is expansive.
\end{proof}

\begin{corollary}\label{cor:lem3ooo}
	For every $\varepsilon>0$ there exists $\delta>0$ such that for every $\theta\in\cR_1$ forward recurrent (with respect to the geodesic flow), for $x\eqdef\chi(\theta)$ and for every $y\in W^{\ss}(x)$, $d(y,x)\le \delta$ for all $t\ge0$ it holds
\[
	d(\psi_t(y),\psi_t(x))
	\le \varepsilon
\]	
and 
\[
	\lim_{t\to\infty}d(\psi_t(y),\psi_t(x))=0.
\]	
\end{corollary}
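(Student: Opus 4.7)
The plan is to combine Proposition \ref{lem:3ooo} (pseudo-convexity) with Lemma \ref{lem:2ooo} (the $\liminf=0$ property along recurrent orbits). Fix $\varepsilon>0$ and let $\delta=\delta(\varepsilon)>0$ be the constant provided by Proposition \ref{lem:3ooo}, which we may shrink so that $\delta\le\varepsilon$.

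First I would prove the uniform bound. Let $\theta\in\cR_1$ be forward recurrent, set $x=\chi(\theta)$, and take $y\in W^{\ss}(x)$ with $d(y,x)\le\delta$. By Lemma \ref{lem:2ooo} applied to $x$ and $y$, there exists a sequence $t_n\to\infty$ such that $d(\psi_{t_n}(y),\psi_{t_n}(x))\to 0$; in particular, for $n$ large enough, $d(\psi_{t_n}(y),\psi_{t_n}(x))\le\delta$. Invoking Proposition \ref{lem:3ooo} on the interval $[0,t_n]$, whose endpoints both satisfy the $\delta$-bound, yields $d(\psi_s(y),\psi_s(x))\le\varepsilon$ for all $s\in[0,t_n]$. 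Letting $n\to\infty$ gives $d(\psi_s(y),\psi_s(x))\le\varepsilon$ for all $s\ge 0$.

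Next I would prove that the distance tends to zero. Fix an arbitrary $\varepsilon'\in(0,\varepsilon]$ and let $\delta'=\delta(\varepsilon')>0$ be the associated constant from the first part (again with $\delta'\le\varepsilon'$). Since $\liminf_{t\to\infty}d(\psi_t(y),\psi_t(x))=0$ by Lemma \ref{lem:2ooo}, there exists $T>0$ with $d(\psi_T(y),\psi_T(x))\le\delta'$. Set $\theta'\eqdef\phi_T(\theta)$, $x'\eqdef\psi_T(x)=\chi(\theta')$ and $y'\eqdef\psi_T(y)$; by invariance of $\cR_1$ under the geodesic flow and of $\cF^\s$ (see \eqref{eq:invariance} and the discussion following Theorem \ref{thmy:manifolds}), $\theta'\in\cR_1$ is forward recurrent and $y'\in W^{\ss}(x')$. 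The first part of the proof applied to $(\theta',x',y')$ with constant $\varepsilon'$ therefore gives
\[
d(\psi_{s}(y),\psi_{s}(x))=d(\psi_{s-T}(y'),\psi_{s-T}(x'))\le\varepsilon'
\quad\text{ for all }s\ge T.
\]
Since $\varepsilon'>0$ was arbitrary, $\lim_{t\to\infty}d(\psi_t(y),\psi_t(x))=0$.

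The main obstacle I anticipate is a purely bookkeeping one, namely verifying that the hypotheses of Proposition \ref{lem:3ooo} and Lemma \ref{lem:2ooo} are indeed preserved under the time-$T$ shift used in the bootstrapping step: membership in $W^{\ss}$ follows from \eqref{eq:invariance}, forward recurrence of $\theta'$ follows from forward recurrence of $\theta$, and membership $\theta'\in\cR_1$ follows from the invariance of $\cR_1$ under $\Phi$ recorded in Section \ref{expansivepoints}. Once these are in place, the two ingredients combine cleanly and no further analysis of Green bundles or horospherical geometry is required at this stage.
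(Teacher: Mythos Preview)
Your argument is correct and follows exactly the route the paper intends: the paper's proof is the single sentence ``Lemma \ref{lem:2ooo} together with Proposition \ref{lem:3ooo} implies the claim,'' and your write-up is a faithful unpacking of that sentence, including the bootstrapping step needed to upgrade the $\liminf$ to a genuine limit. The bookkeeping you flag (invariance of $\cR_1$, of forward recurrence, and of $W^{\ss}$ under the time-$T$ shift) is indeed all that is needed and is justified as you indicate.
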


\begin{proof}
Lemma \ref{lem:2ooo} together with Proposition \ref{lem:3ooo} implies  the claim. 
\end{proof}

\begin{proof}[Proof of Proposition \ref{pro:uniformcontra}]
We first prove the claim for $x$ such that $x=\chi(\theta)$ where $\theta\in \cR_1$ is forward recurrent (with respect to the geodesic flow).
Let $\varepsilon>0$ be a constant of expansivity for the flow $\psi_t$. Let $\delta=\delta(\varepsilon)>0$ as provided by Proposition \ref{lem:3ooo}. 
We argue again by contradiction. Let $D\in(0,\varepsilon)$. Suppose that there exist $a\in(0,D)$, sequences of forward recurrent vectors $\theta_n\in \cR_1$, points $x_n=\chi(\theta_n)$, and $y_n\in W^{\ss}(x_n)$ satisfying $d(y_n,x_n)\le D$, and a sequence of times $t_n\to\infty$ as $n\to\infty$ such that for every $n\ge1$
\[
	d(\psi_{t_n}(y_n),\psi_{t_n}(x_n))\ge a. 
\]
By Lemma \ref{lem:2ooo}, there exists a sequence $\tau_n$ such that $\tau_n-t_n\to\infty$ and 
\[
	d(\psi_{\tau_n}(y_n),\psi_{\tau_n}(x_n))\le D.
\]	
Hence, the points
 $z_n\eqdef\psi_{t_n}(x_n)$ and $w_n\eqdef\psi_{t_n}(y_n)$ satisfy 
\begin{itemize}
\item $w_n\in W^\ss(z_n)$,
\item $d(\psi_t(w_n),\psi_t(z_n))\le\varepsilon$ for every $t\ge -t_n$, and
\item $d(w_n,z_n)\ge a$.
\end{itemize}
Up to passing to some subsequence of indices, we can assume that these sequences have limit points $z_\infty=\lim_n z_n$ and $w_\infty=\lim_n w_n$. It holds:
\begin{itemize}
\item $w_\infty\in W^\ss(z_\infty)$,
\item $d(\psi_t(w_\infty),\psi_t(z_\infty))\le\varepsilon$ for every $t\in\bR$, and
\item $d(w_\infty,z_\infty)\ge a$.
\end{itemize}
But this contradicts expansivity.

Let us now consider an arbitrary (not necessarily recurrent and in $\cR_1$) point $x\in X$. Given $D>0$ and $a>0$, let $T=T(2D,a)>0$ satisfying the claimed property for any quotient of a generalized rank one vector. Let $y\in W^\ss(x)$ satisfying $d(y,x)\le D$.
Choose vectors $\theta\in\chi^{-1}(x)$ and $\eta\in\chi^{-1}(y)$ and consider the minimal connected subset of $\cF^\s(\theta)$ containing $\theta$ and $\eta$, denote it by $\cF^\s(\theta,\eta)$.

By continuity of the flow $\psi_t$ on the compact space $X$, for every $t_0\ge T$ there exists $\delta_1\in(0,\frac12 D)$ such that
\[
	d(\psi_t(z),\psi_t(y))\le a
	\quad\text{ for every }
	z,y\in X, d(z,y)\le\delta_1,
	\text{ for every }
	t\in[0,t_0] .
\]
By uniform continuity of the quotient $\chi\colon T^1M\to X$, there is $\delta_2>0$ such that every set of diameter at most $\delta_2$ is quotient into a set of diameter at most $\delta_1$. Recall that, by Theorem \ref{thmy:manifolds} (3), $\cR_1$ is dense in $T^1M$. Also recall that the foliation $\cF^\s$ is continuous. Hence, there exist $\theta'\in\cR_1$, $d_\Sak(\theta',\theta)<\delta_2$, and $\eta'\in\cF^\s(\theta')$ such that $d_\Sak(\eta',\eta)\le\delta_2$ and that $\cF^\s(\theta',\eta')$ is contained in a $\delta_2$-neighborhood of $\cF^\s(\theta,\eta)$.
Letting $x'\eqdef\chi(\theta')$ and $y'\eqdef\chi(\eta')$, hence $y'\in W^\ss(x')$ and $d(y',y)\le\delta_1$ and $d(x',x)\le\delta_1$. In particular, $d(x',y')\le2\frac D2+D=2D$. Hence, by our choice of $T$, it holds
\[
	d(\psi_t(y'),\psi_t(x'))
	\le a
	\quad\text{ for every }t\in[T,t_0]
\]
Hence, by the triangle inequality
\[
	d(\psi_t(y),\psi_t(x))
	\le d(\psi_t(y),\psi_t(y'))+d(\psi_t(y'),\psi_t(x'))+d(\psi_t(x'),\psi_t(x))
	\le 3a
\]
for every $t\in[T,t_0]$. 
As $t_0\ge T$ was arbitrary, this concludes the proof of the proposition.
\end{proof}

\subsection{Local product structure}\label{sec:locpro}

Let us investigate the local product structure of the quotient flow $\Psi$. First recall some definitions. Given $x\in X$, define the \emph{center stable set} of $x$ (with respect to the quotient flow $\Psi$) by
\[
	\cW^\cs(\Psi,x)
	\eqdef \{y\in X\colon d(\psi_t(y),\psi_t(x))\le C
	\text{ for all }t\ge0\text{ for some }C>0\}
\]
 and for $\varepsilon>0$ let
\[
	\cW^\cs_\varepsilon(\Psi,x)
	\eqdef \{y\in \cW^\cs(\Psi,x)\colon d(\psi_t(y),\psi_t(x))\le \varepsilon
	\text{ for all }t\ge0\}.
\]
Analogously, define the \emph{center unstable set} $\cW^\cu(\Psi,x)$ of $x$ (with respect to the quotient flow $\Psi$) as the center stable set of $x$ (with respect to the quotient flow $\Psi^{-1}$ defined by $\Psi^{-1}(t,\cdot)=\Psi(-t,\cdot)$).  

The flow $\Psi$ has a \emph{local product structure} if for every $\varepsilon>0$ there is $\delta>0$ such that for every $x,y\in X$ with $d(x,y)\le\delta$ there is a unique $\tau=\tau(x,y)\in\bR$ with $\lvert\tau\rvert\le\varepsilon$ satisfying $\cW^\cs_\varepsilon(\Psi,\psi_\tau(x))\cap\cW^\cu_\varepsilon(\Psi,y) \ne\emptyset$.
	
\begin{proposition}\label{prop:LPS}
	The quotient flow $\Psi$ has a local product structure.	
\end{proposition}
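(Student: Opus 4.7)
The plan is to combine the heteroclinic intersection of leaves in the universal cover provided by Theorem \ref{teo:2drei}(3) with the uniform contraction along $W^{\ss}$ and $W^{\uu}$ leaves in the quotient coming from Propositions \ref{pro:uniformcontra} and \ref{lem:3ooo}. First I would prove a \emph{stable inclusion}: for every $\varepsilon>0$ there is $\delta_1>0$ such that $y\in W^{\ss}(x)$ with $d(y,x)\le \delta_1$ implies $y\in \cW^{\cs}_\varepsilon(\Psi,x)$, and symmetrically for $W^{\uu}$ and $\cW^{\cu}_\varepsilon$. Pseudo-convexity (Proposition \ref{lem:3ooo}) supplies $\delta_1'>0$ with the property that whenever both $d(y,x),d(\psi_t(y),\psi_t(x))\le\delta_1'$ one has $d(\psi_s(y),\psi_s(x))\le\varepsilon$ for every $s\in[0,t]$; uniform contraction (Proposition \ref{pro:uniformcontra}) combined with continuity of $\psi_t$ on any compact time interval delivers such an endpoint bound at arbitrarily large $t$ once $d(y,x)$ is sufficiently small, and letting $t\to\infty$ propagates the bound to every $s\ge 0$.

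Second, I would apply Theorem \ref{teo:2drei}(3) in $T_1\tilde M$: given non-antipodal lifts $\bar\theta,\bar\eta\in T_1\tilde M$, there is a geodesic $\beta$ with $\beta(\infty)=\gamma_{\bar\theta}(\infty)$ and $\beta(-\infty)=\gamma_{\bar\eta}(-\infty)$. Parametrize $\beta$ so that $\beta(0)\in H^-(\bar\eta)$; then $\bar\xi\eqdef\dot\beta(0)\in \tilde{\mathscr F}^{\u}(\bar\eta)$, and there is a unique flow-shift $\tau=\tau(\bar\theta,\bar\eta)=-b^+_{\bar\theta}(\beta(0))\in\bR$ with $\bar\xi\in \tilde{\mathscr F}^{\s}(\phi_\tau(\bar\theta))$. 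Setting $\xi\eqdef\bar\pi(\bar\xi)$ and $z\eqdef\chi(\xi)$, the semi-conjugation \eqref{eq:semiconju} gives $z\in W^{\ss}(\psi_\tau(x))\cap W^{\uu}(y)$, where $x=\chi(\bar\pi(\bar\theta))$ and $y=\chi(\bar\pi(\bar\eta))$.

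Third, I would arrange the requisite closeness. Closedness of $\chi\colon T^1M\to X$ (continuous from a compact space to a Hausdorff one) yields upper semicontinuity of $\chi^{-1}$: for every prescribed Sasaki-closeness of representatives, there is $\delta>0$ such that $d(x,y)\le\delta$ admits $\theta\in\chi^{-1}(x)$, $\eta\in\chi^{-1}(y)$, and lifts $\bar\theta,\bar\eta\in T_1\tilde M$ at arbitrarily small Sasaki distance. Choosing $\beta$ canonically (so that $\beta=\gamma_{\bar\eta}$ in the limit $\bar\theta=\bar\eta$) and using the continuity of the horocycle foliations (Proposition \ref{hor-continuous}) together with the Lipschitz regularity of Busemann functions (Section \ref{sec:21}) and the bounded Hausdorff distance between bi-asymptotic geodesics (Lemma \ref{strips}), one can force $|\tau|\le\varepsilon$; the collapse of strips under $\chi$ then gives $d(z,y),d(z,\psi_\tau(x))\le\delta_1$ in $X$ for $\delta$ chosen small enough. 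The stable inclusion from the first step places $z$ in $\cW^{\cs}_\varepsilon(\Psi,\psi_\tau(x))\cap\cW^{\cu}_\varepsilon(\Psi,y)$. Uniqueness of $\tau\in(-\varepsilon,\varepsilon)$ follows from expansivity (Proposition \ref{pro:expansive}): two admissible values would produce intersection points whose $\Psi$-orbits remain globally $2\varepsilon$-close without agreeing up to a small time shift, contradicting expansiveness.

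The main obstacle I anticipate is controlling the flow-shift $\tau=-b^+_{\bar\theta}(\beta(0))$ when the base point lies in a nontrivial strip: although the strip collapses to a single point in $X$, the geodesic $\beta$ in $T_1\tilde M$ is not uniquely determined in that case, and one must exhibit a selection making $\tau\to 0$ as $\bar\theta\to\bar\eta$. The strip-width bound (Lemma \ref{strips}), the continuity of the horocycle foliations, and the Lipschitz regularity of Busemann functions should together supply such a consistent selection, but isolating the correct quantitative statement is the delicate technical point.
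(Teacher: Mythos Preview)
Your approach is viable and reaches the same conclusion, but it takes a genuinely different route from the paper. The paper's proof leverages the local chart structure built in Lemma~\ref{lem:recregio}: it extracts a finite cover of $X$ by the ``rectangular'' basis sets $\bar\chi(A_{\bar\theta}(\tau,\varepsilon,\delta,\ldots))$, takes a Lebesgue number, and for $x,y$ in a common chart reads off the intersection $[x,y]$ directly from the cross-section projection $\Pi_\Sigma$; the closeness of $[x,y]$ to $x$ and $\psi_{r+s}(y)$ and the smallness of the time shift come for free from the bounded geometry of the finitely many charts, and the $\varepsilon$-shadowing is then obtained by invoking Corollary~\ref{cor:lem3ooo}. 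In contrast, you bypass the chart machinery entirely and produce the heteroclinic point globally via Theorem~\ref{teo:2drei}(3), then use Propositions~\ref{pro:uniformcontra} and~\ref{lem:3ooo} to get the stable/unstable inclusion. What the paper's route buys is that the delicate ``third step'' you flag (controlling $\tau$ and the distances uniformly, including over nontrivial strips) is already absorbed into the construction of the basis in Section~\ref{secteo:quotient}; what your route buys is independence from that construction, at the cost of having to carry out explicitly the compactness/continuity argument you outline (strip-width bound, continuity of horocycle foliations, Lipschitz Busemann functions). Your anticipated obstacle is real but surmountable along the lines you sketch: any limit of the connecting geodesics $\beta$ as $\bar\theta\to\bar\eta$ lies in the strip $S(\bar\eta)$, so $\beta(0)$ accumulates on $I(\bar\eta)\subset H^+(\bar\eta)$ and $\tau=-b^+_{\bar\theta}(\beta(0))\to 0$; uniformity then follows by a standard compactness contradiction.
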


\begin{proof}
	By Lemma \ref{lem:recregio}, the collection \eqref{eq:bases} provides a basis for the quotient topology of $\bar X$.  As $X$ is compact and locally homeomorphic to $\bar X$, there exists a finite collection 
\[
	\bar\cA
	\eqdef\{\bar A_k\}_k,
	\quad
	\bar A_k=\big\{\bar\chi(A_{\bar\theta_k}
		(\tau_k,\varepsilon_k,\delta_k,\bar\xi^-_k,\bar\xi^+_k,
			\bar\eta^-_k,\bar\eta^+_k))\big\}
\] 
of open sets such that $\cA\eqdef\{A_k\}_k$, where $A_k\eqdef\bar\Pi(\bar A_k)$ for every $k$, is an open cover of $X$. If $\kappa>0$ is a Lebesgue number for this cover then every pair of points $x,y\in X$ satisfying $d(y,x)\le\kappa$ is simultaneously contained in some element $A\in\cA$. Hence, using the notation in Section \ref{secteo:quotient}, it holds
\[
	x,y\in
	\bar\chi(\phi_{(-\tau,\tau)}(U))
\]  
for some $\tau>0$ and $U\subset\Sigma=\Sigma_{\bar\theta}(\varepsilon,\delta)$ for some positive numbers $\varepsilon$ and $\delta$ and $\bar\theta\in T_1\tilde M$. In particular, there are vectors $\bar\xi,\bar\eta\in \bar\chi^{-1}(A)$ such that $(\bar\Pi\circ\bar\chi)(\bar\xi)=x$ and $(\bar\Pi\circ\bar\chi)(\bar\eta)=y$, times $r,s\in(-\tau,\tau)$ such that $\phi_r(\bar\xi),\phi_s(\bar\eta)\in\Sigma_{\bar\theta}(\varepsilon,\delta)$ and that
\[
	[\phi_r(\bar\xi),\phi_s(\bar\eta)]
	= W^\s_\Sigma(\phi_r(\bar\xi))\cap W^\u_\Sigma(\phi_s(\bar\eta))
	= \Pi_\Sigma\big(\tilde\cF^\s(\phi_r(\bar\xi))\big)\cap 
		\tilde\cF^\cu(\phi_s(\bar\eta))
	\subset\Sigma.
\]
Hence, applying the quotient map, we can define
\[
	[x,y]
	\eqdef(\bar\Pi\circ\bar\chi)
		(\tilde\cF^\s(\bar\xi)\cap \tilde\cF^\u(\phi_{r+s}(\bar\eta)))
	\subset W^\ss(x)\cap W^\uu(\psi_{r+s}(y)).
\]	
Notice that it follows from expansivity that $[x,y]$ indeed contains just one point.
Moreover, there is some positive number $\varepsilon'$ (not depending on $A\in\cA$) such that in fact
\[
	[x,y]\in W^\ss_{\varepsilon'}(x)\cap W^\uu_{\varepsilon'}(\psi_{r+s}(y)).
\]
By Corollary \ref{cor:lem3ooo}, there exists $\delta'>0$ such that for all $t\ge0$ it holds
\[
	d\big(\psi_t([x,y]),\psi_t(x)\big)
	\le \delta'
\]
and analogously 
\[
	d\big(\psi_{-t}([x,y]),\psi_{-t+r+s}(y)\big)
	\le\delta'.
\]
This concludes the proof of the local product structure of $\Psi$.
\end{proof}

\subsection{Proof of Theorem \ref{teo:quotientflow}}\label{sec:poooof}

Together with Propositions \ref{pro:expansive} and \ref{prop:LPS}, it only remains to show that $\Psi$ is topologically mixing, which is an immediate consequence of semi-conjugacy \eqref{eq:semiconju} and the fact that $\Phi$ is mixing (recall Theorem \ref{teo:2drei}). 
\qed

\section{Lyapunov exponents and the Riccati equation}\label{sec:LyapRicc}

One of the landmarks of the theory of manifolds without conjugate points is the work of Eberlein  \cite{kn:Eberlein} linking linear independence of Green subspaces
with hyperbolicity: Green subspaces are linearly independent at every $\theta \in T_{1}{M}$ if and only
if the geodesic flow is Anosov (\cite[Theorem 3.2]{kn:Eberlein1}). Knieper \cite[Chapter IV]{kn:Knieper} for a compact surface without conjugate points with genus  greater than one shows that if the Green subspaces  vary continuously then the metric entropy of the geodesic flow with respect to the Liouville measure is positive.
There are two main features of the dynamics of the geodesic flow that are crucial in Knieper's result: Katok's proof of the existence of a hyperbolic invariant measure
for the geodesic flow \cite{Kat:82} and the Ma\~{n}\'{e}--Freire
 formula for the metric entropy of the geodesic flow in a compact manifold without conjugate points \cite{kn:FM}.
This formula is written in terms of the Riccati equation associated to the Jacobi equation, we explain briefly the main properties of this equation in the
next subsection.

\subsection{Riccati equation}

Given a geodesic $\gamma$, let $E\colon \bR\to T^1M$ be (one of the two) continuous orthogonal unit vector fields along $\gamma$. Then any orthogonal Jacobi field along $\gamma$ is given by $J(t)=j(t)E(\gamma(t))$, where $j$ is a scalar function which must satisfy the scalar differential equation
\begin{equation}\label{eq:Jacobi-2}
	\frac{d^2}{dt^2}j(t)+K(\gamma(t))j(t)=0,
\end{equation}
where $K$ is the Gaussian curvature. Assuming $j\ne0$, its logarithmic derivative  $u\eqdef \frac1j\frac{d}{dt}j$ satisfies the Riccati equation
\begin{equation}\label{eq:Riccati}
	\frac{d}{dt}u(t)+u(t)^2+K(\gamma(t))=0.
\end{equation}

On the other hand, any global solution $u\colon\bR\to\bR$ of  \eqref{eq:Riccati} by
\begin{equation}\label{def:ju}
	j(t)
	= e^{\displaystyle\int_0^t u(s)\,ds},\quad\text{ hence }
	\frac{d}{dt}j(t)
	= j(t)u(t)
	= u(t) \,e^{\displaystyle\int_0^t u(s)\,ds},
\end{equation}
defines a (normalized to $\lVert J(t)\rVert=1$) solution for the scalar equation \eqref{eq:Jacobi-2}. More precisely, denote by $u^\s_r(\theta,t)$ and $u^\u_r(\theta,t)$ the solutions of  the Riccati equation \eqref{eq:Riccati} that satisfy $u^\s_r(,\theta,-r)=\infty$ and $u^\u_r(\theta,r)=-\infty$, respectively. Then those solutions are defined for all $t>-r$ and all $t<r$, respectively. Their limit solutions
\[
	u^\ast_\theta(t)
	\eqdef \lim_{r\to\infty}u^\ast_r(\theta,t),
	\quad\ast\in\{\s,\u\},
\]
are defined for all $t\in\bR$. It holds $u^\u_\theta-u^\s_\theta\ge0$. Moreover, any global solution $u(t)$ of \eqref{eq:Riccati} is bounded and tends to $u^\u_\theta(t)$ as $t\to\infty$ and to $u^\s_\theta(t)$ as  $t\to-\infty$. The functions $u^\s_\theta(t)$ and $u^\u_\theta(t)$) are upper semi-continuous and lower semi-continuous in $\theta$, respectively. Clearly, both solutions are invariant in the sense that 
\[
	u^\ast_{\phi_s(\theta)}(t)=u^\ast_\theta(t+s),
	\quad \ast\in\{\s,\u\}.
\]	 
By \eqref{def:ju}, they define the stable and the unstable Green Jacobi fields, respectively.

The following result summarizes properties of the solutions of the Riccati equation  (see \cite[Lemma 2.8]{kn:Eberlein1}) which we will use below.

\begin{lemma} \label{lemprop:awayfromvertical}
	Let $(M,g)$ be a compact manifold. Given a geodesic $\gamma\colon \mathbb{R} \to M$, let $\kappa>0$ be a constant such that $K > -\kappa^2$. Then any solution $u(t)$ of the Riccati equation \eqref{eq:Riccati} that is defined for every $t\in (a,b)$ satisfies
\[
	  -\kappa\coth(\kappa(b-t)) 
	  \leq  u(t) 
	  \leq \kappa\coth(\kappa(t-a)). 
\]	  
In particular, the following holds:
\begin{enumerate}
\item For any  $\epsilon\in(0, b-a)$, there exists $C(\epsilon, k_{0})>0$ such that for every $t > a+\epsilon$
\[
	\lvert u(t) \rvert \leq C (\epsilon,\kappa).
\]
\item  If $u(t)$ is defined for every $t \in {\mathbb R}$ then it holds
\[
	\lvert u(t)\rvert
	\le\kappa.
\]	
\end{enumerate}
\end{lemma}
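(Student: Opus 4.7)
My plan is to compare $u$ with the explicit solutions of the constant-curvature Riccati equation $v' = -v^2 + \kappa^2$, namely the family $v_c(t) \eqdef \kappa\coth(\kappa(t-c))$, $c\in\bR$, which is exactly the Riccati solution one would obtain along a geodesic of constant curvature $-\kappa^2$ with blow-up at $t=c$.

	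First I would observe that the curvature hypothesis $K(\gamma(t)) > -\kappa^2$ together with the Riccati equation \eqref{eq:Riccati} yields the differential inequality $u'(t) \le -u(t)^2 + \kappa^2$ on $(a,b)$. For each $c\in(a,b)$, the comparison function $v_c$ is defined and smooth on $(c,\infty)$, satisfies $v_c'(t) = -v_c(t)^2 + \kappa^2$ exactly, and blows up to $+\infty$ as $t\to c^+$. Since $u$ is finite at $c$, we have $v_c > u$ on a right-neighborhood of $c$. I would then show this inequality persists on $(c,b)$: if not, by continuity there is a first crossing time $t_1\in(c,b)$ at which $v_c(t_1)=u(t_1)$ and $(v_c-u)'(t_1)\le 0$. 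However, at such $t_1$, a direct computation using the two equations gives
	\[
		(v_c-u)'(t_1) = u(t_1)^2 - v_c(t_1)^2 + \kappa^2 + K(\gamma(t_1)) = \kappa^2 + K(\gamma(t_1)) > 0,
	\]
	a contradiction. Hence $u(t) < v_c(t)$ on all of $(c,b)$. Letting $c\to a^+$ with $t\in(a,b)$ fixed and using continuity of $\coth$ yields $u(t) \le \kappa\coth(\kappa(t-a))$.

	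For the lower bound, I would apply the same argument to the function $\tilde u(t)\eqdef -u(-t)$, which is readily verified to solve the Riccati equation along the time-reversed geodesic on the interval $(-b,-a)$; the curvature hypothesis is preserved under reversal. Applying the upper bound already proved to $\tilde u$ yields $\tilde u(t)\le \kappa\coth(\kappa(t+b))$, which after substitution $s=-t$ becomes $u(s) \ge -\kappa\coth(\kappa(b-s))$.

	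Items (1) and (2) follow immediately: $\coth$ is strictly decreasing on $(0,\infty)$ with $\coth(\kappa t)\to 1$ as $t\to+\infty$. For (1), if $t>a+\epsilon$ then $\kappa\coth(\kappa(t-a))\le \kappa\coth(\kappa\epsilon)\eqdef C(\epsilon,\kappa)$ (and symmetrically for $t<b-\epsilon$), and for (2), passing to $a\to-\infty$ and $b\to+\infty$ in the two-sided bound gives $\lvert u(t)\rvert\le\kappa$. I expect the main subtlety to be the comparison near the left endpoint: both $u$ and $v_a$ may blow up at rate $1/(t-a)$ as $t\to a^+$, so a direct comparison of $u$ with $v_a$ is delicate; the shift to $c>a$ followed by the limit $c\to a^+$ is the key trick that bypasses this simultaneous blow-up while still delivering the sharp bound.
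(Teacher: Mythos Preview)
The paper does not actually prove this lemma; it merely cites \cite[Lemma 2.8]{kn:Eberlein1} for the result. Your comparison argument with the explicit constant-curvature solutions $v_c(t)=\kappa\coth(\kappa(t-c))$ is correct and is precisely the standard approach (and essentially what appears in Eberlein's paper): the strict inequality $K>-\kappa^2$ forces $(v_c-u)'>0$ at any first contact point, so no crossing can occur, and the shift $c>a$ followed by $c\to a^+$ cleanly avoids the simultaneous blow-up issue you anticipated. One minor remark: item (1) as stated in the lemma is slightly imprecise (the lower bound $-\kappa\coth(\kappa(b-t))$ still diverges as $t\to b^-$), and you correctly handle this by noting the symmetric restriction $t<b-\epsilon$; this is a quirk of the lemma's wording, not of your argument.
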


\begin{remark}[canonical construction of Green subbundles]\label{rem:Ricc}
The un-/stable Green Jacobi fields (or, in the case of surfaces, equivalently as explained above, the globally defined un-/stable solutions of the Riccati equation) completely encode the stable (unstable) Green subbundles (recall \cite[Proposition 1.7]{kn:Eberlein1}). 

The assumption that Green subbundles vary continuously is equivalent to continuity (in $\theta$) of the solutions $(J^\s_\theta(t),{J^\s_\theta}'(t))$ and $(J^\u_\theta(t),{J^\u_\theta}'(t))$ of the Jacobi equation, which in turn is equivalent to the continuous (in $\theta$) dependence of the stable and unstable solutions of the Riccati equation $u^\s_\theta(t)$ and $u^\u_\theta(t)$.

Finally, by uniqueness of solutions of the equation \eqref{eq:Riccati}, if $u^\u_\theta(t)= u^\s_\theta(t)$ for some $t$ then $u^\u_\theta\equiv u^\s_\theta$. In particular, if there exist two distinct solutions  of  \eqref{eq:Riccati} that are defined for all $t\in\bR$ then they define two linearly independent Green subbundles along $\gamma_\theta$  and hence $\theta\in\cR_1$. 
\end{remark}

\subsection{Lyapunov exponents and Green subspaces}\label{sec:greenspas}
The \emph{Lyapunov exponent} for $\theta\in T^1M$ and $v\in T_\theta T^1M$ (with respect to the geodesic flow $\Phi$) is defined by
	\[
	\lambda(\theta,v)
	\eqdef \lim_{t\to\pm\infty}\frac1t\log\,\lVert D\phi_t(v)\rVert,
	\]
	provided both limits exist and coincide. In general, limits may not exist; and if they do exist they may not coincide. By Oseledets' theorem there is a subset $\Lambda\subset T^1M$ of total probability%
\footnote{A measurable subset $\Lambda$ is of \emph{total probability} if it has full measure with respect to any invariant Borel probability measure.}
such that for every $\theta\in\Lambda$ there exist $k(\theta)\le 2n-1$ and a decomposition $T_\theta T^1M=E^1(\theta)\oplus\ldots\oplus E^{k(\theta)}(\theta)$ into invariant subspaces and numbers $\lambda_1(\theta)<\ldots<\lambda_{k(\theta)}(\theta)$ such that $\lambda(\theta,\xi)=\lambda_i(\theta)$ for every $\xi\in E^i(\theta)\setminus\{0\}$. Denote  $E^\s(\theta)\eqdef \Span\{E^i(\theta)\colon\lambda_i(\theta)<0\}$, $E^\u(\theta)\eqdef \Span\{E^i(\theta)\colon\lambda_i(\theta)>0\}$, and let $E^\c(\theta)\eqdef \Span\{E^i(\theta)\colon\lambda_i(\theta)=0\}$. Note that the latter contains $\dot\gamma_\theta(0)$. Note that 
\begin{equation}\label{eq:FreMan}
	E^\star(\theta)\subset G^\star(\theta)\subset E^\star(\theta)\oplus E^\c(\theta), 
	\quad\star=\s,\u,
\end{equation}	 
in a set of total probability (see, for example, \cite{kn:FM}).
We call the set $\Lambda$ the  \emph{set of Oseledets regular points}. 

We call an $\phi_1$-ergodic Borel probability measure $\mu$ \emph{hyperbolic} if at $\mu$-almost every point the only  subspace in the Oseledets decomposition that is associated to a zero Lyapunov exponent is the one generated by the vector field of the flow.

The relationship between nonzero Lyapunov exponents and the linear independence of Green subspaces goes back to Eberlein's characterization of Anosov geodesic flows in \cite{kn:Eberlein2}, later Freire--Ma\~{n}\'{e}'s work \cite{kn:FM} made an important contribution that was subsequently explored  by Knieper \cite{kn:Knieper}.
By \eqref{eq:FreMan}, Oseledets subbundles are naturally related to the  Green bundles.
It is natural to ask whether, as a sort of converse of Theorem \ref{thmy:manifolds}, the existence of positive Lyapunov exponents implies the linear independence of Green subspaces. 
Arnaud \cite{kn:Arnaud} answers this type of question positively  in the context of Mather measures of Tonelli Hamiltonians.
We would like to extend this result to our context, starting by the following result. 
Note that by \eqref{eq:Lya}, hypothesis \eqref{eq:hypJa} is equivalent to assuming the existence of a positive (forward) Lyapunov exponents.

\begin{proposition} \label{pro:exponents2}
	Let $(M,g)$ be a compact surface without conjugate points. Suppose that there are a geodesic $\gamma_{\theta}$ and a orthogonal Jacobi field $J(t)$ of $\gamma_\theta$ that does not vanish for every $t\geq 0$ such that
\begin{equation}\label{eq:hypJa}
	\lim_{t \to  \infty} \frac{1}{t}\log  \,\lVert J(t)\rVert
	= \lambda >0.
\end{equation}
	Then
	\begin{enumerate}
		\item There exists a orthogonal Jacobi field $W(t)$ in $\gamma_{\theta}$ such that
		$$
		\lim_{t \to  \infty} \frac{1}{t}\log\,  \lVert W(t)\rVert = -\lambda .
		$$
		\item The Jacobi field $W(t)$ is a stable Green Jacobi field.
		\item Moreover, assuming also that $(M,g)$ has continuous  stable and unstable Green bundles, then these Green subspaces are linearly independent along the orbit of $\theta$, that is, $\theta\in\cR_1$.
	\end{enumerate}
\end{proposition}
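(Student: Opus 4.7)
The plan is to treat parts (1) and (2) by an explicit reduction of order in the scalar Jacobi equation, and then argue (3) by contradiction using the Riccati structure together with a Freire-Ma\~n\'e-type inclusion that is extended by continuity of the Green bundles.

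For (1), I would work in the scalar Jacobi equation: on a surface, $J(t) = j(t)\,E(\gamma_\theta(t))$ with $E$ a parallel orthogonal unit vector field and $j$ satisfying \eqref{eq:Jacobi-2}. Since $j(t) \neq 0$ for $t \geq 0$ and, by hypothesis, $|j(s)| \geq e^{(\lambda - \varepsilon)s}$ for $s$ large, the integral $\int_t^\infty j(s)^{-2}\,ds$ converges, and I define
\[
  w(t) := j(t)\int_t^\infty \frac{ds}{j(s)^2}.
\]
Standard reduction of order shows $w$ also solves \eqref{eq:Jacobi-2}, and a direct computation gives $jw' - j'w \equiv -1$; hence $W := w\,E$ is an orthogonal Jacobi field linearly independent of $J$. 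Splitting $\log|w(t)| = \log|j(t)| + \log\!\int_t^\infty j(s)^{-2}\,ds$ and combining $\log|j(t)|/t \to \lambda$ with the asymptotics $\int_t^\infty e^{-2(\lambda \pm \varepsilon)s}\,ds \sim (2(\lambda \pm \varepsilon))^{-1}e^{-2(\lambda \pm \varepsilon)t}$ gives $\log|w(t)|/t \to -\lambda$, proving (1). For (2), since $\lambda > 0$ the estimate yields $\|W(t)\| \to 0$, so $\inf_{t>0}\|W(t)\| = 0$, and Lemma \ref{Green-surfaces}(1) implies that $W$ is a stable Green Jacobi field.

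For (3), I argue by contradiction: suppose $G^\s(\theta) = G^\u(\theta)$. By Remark \ref{rem:Ricc}, $u^\s_\theta \equiv u^\u_\theta$ on all of $\mathbb{R}$; denote this unique global Riccati solution by $u$. Then $W$ spans $G^\s(\theta) = G^\u(\theta)$, while $J$ and $W$ are linearly independent since their forward Lyapunov exponents $\pm\lambda$ differ and thus span the two-dimensional space of orthogonal Jacobi fields along $\gamma_\theta$. By no conjugate points, $J$ has at most one zero on $\mathbb{R}$. If $J$ never vanishes, then $v_J := j'/j$ is a global Riccati solution; by the characterization of $u^\s, u^\u$ as the extremal global solutions (discussion after \eqref{eq:Riccati}), $v_J \equiv u$, so $j/w$ is constant and $J$ is proportional to $W$, contradicting linear independence. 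If instead $J$ vanishes at some $T_0 < 0$, I invoke the Freire-Ma\~n\'e inclusion \eqref{eq:FreMan}, extended pointwise to $\theta$ using the continuity of the Green bundles and the fact that $\theta$ has a well-defined forward Lyapunov decomposition (provided by $J$ and the constructed $W$): $E^\u(\theta) = \Span(J) \subset G^\u(\theta)$. Combined with the standing assumption $G^\u(\theta) = G^\s(\theta) = \Span(W)$, this forces $J \in \Span(W)$; since $J$ vanishes at $T_0$ while $W$ (being Green) does not, this is impossible. Hence $G^\s(\theta) \neq G^\u(\theta)$.

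The main obstacle is the last step: promoting the inclusion $E^\u \subset G^\u$ from a statement on a set of total probability to the specific point $\theta$. Whereas $E^\s \subset G^\s$ is immediate from the decay characterization of stable Green Jacobi fields in Lemma \ref{Green-surfaces}(1), the unstable side is delicate because the forward hypothesis on $J$ supplies no a priori control of its behavior in backward time, while the Freire-Ma\~n\'e argument normally exploits full Oseledets regularity in both time directions. Continuity of Green bundles is precisely what bridges this gap: it makes $G^\u$ a continuous subbundle along the orbit of $\theta$, so that the inclusion \eqref{eq:FreMan}, valid at the Oseledets-generic vectors that accumulate on $\theta$, passes to $\theta$ itself by a limiting argument combined with the forward Lyapunov data furnished by $J$ and $W$.
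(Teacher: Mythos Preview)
Your arguments for (1) and (2) are essentially the paper's: the same reduction-of-order construction $w(t)=j(t)\int_t^\infty j(s)^{-2}\,ds$ and the same appeal to Lemma~\ref{Green-surfaces}(1).

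For (3), your Case~1 is correct and clean: if $J$ never vanishes then $v_J=j'/j$ is a global Riccati solution, and if $u^\s_\theta=u^\u_\theta$ the global solution is unique, forcing $J$ proportional to $W$, a contradiction. But Case~2 has a genuine gap. You want to conclude $\Span(J)\subset G^\u(\theta)$ by taking Oseledets-regular $\eta_n\to\theta$ where $E^\u(\eta_n)\subset G^\u(\eta_n)$ holds, and passing to the limit. Continuity of $G^\u$ gives $G^\u(\eta_n)\to G^\u(\theta)$, but you have no control whatsoever over $E^\u(\eta_n)$: the Oseledets bundles are merely measurable, and there is no mechanism linking $E^\u(\eta_n)$ to the forward-exponent data $\Span(J)$ at $\theta$. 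The ``forward Lyapunov decomposition'' you define at $\theta$ is not the limit of Oseledets decompositions at nearby points, so the inclusion does not transfer. You yourself flag this as the main obstacle, and the resolution you sketch is not an argument.

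The paper avoids this entirely. It does not split into cases and never invokes \eqref{eq:FreMan}. Instead it works with the two Riccati solutions $u=j'/j$ and $u^\s=w'/w$ on $[0,\infty)$ and observes that
\[
\lim_{t\to\infty}\frac1t\int_0^t\big(u(s)-u^\s(s)\big)\,ds=2\lambda,
\]
so there is a sequence $t_n\to\infty$ with $u(t_n)-u^\s(t_n)\ge 2\lambda$. Passing to a convergent subsequence $\phi_{t_{n_k}}(\theta)\to\eta$, the shifted solutions $u(\,\cdot+t_{n_k})$ subconverge (Arzel\`a--Ascoli, using the uniform Riccati bounds of Lemma~\ref{lemprop:awayfromvertical}) to a solution $\bar u$ defined on all of $\mathbb R$, while continuity of Green bundles gives $u^\s(\,\cdot+t_{n_k})\to u^\s_\eta$. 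At $t=0$ one has $\bar u(0)-u^\s_\eta(0)\ge 2\lambda>0$, so $\gamma_\eta$ carries two distinct global Riccati solutions and hence $\eta\in\cR_1$. Continuity of Green bundles makes $\cR_1$ open; since the orbit of $\theta$ meets this open set, invariance gives $\theta\in\cR_1$. The role of continuity here is concrete: it identifies the limit of the shifted stable Riccati solutions with $u^\s_\eta$, and it makes $\cR_1$ open. Your Case~1 can be recovered from this by noting that when $J$ never vanishes, $u$ is already globally defined and one can take $\eta=\theta$ directly.
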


\begin{proof}
	Assuming that (1) holds true, Item (2) is straightforward from Lemma \ref{Green-surfaces}. To see that Item (1) holds true, fix $E\colon \bR\to T^1M$  an orthogonal continuous unit vector field along $\gamma_\theta$. Write the Jacobi field as $J(t)=j(t)E(t)$. By hypothesis, $j(t)\ne0$ for all $t\ge0$. Observe that  the function
\[ 
	z(t) 
	\eqdef j(t) \int_{0}^{t} \frac{1}{j^{2}(s)}\,ds 
\]	
is well defined for $t\ge0$ and is a solution of \eqref{eq:Jacobi-2} (apply a variation of parameters-argument). By hypothesis \eqref{eq:hypJa}, it holds
\[
	\lim_{t\to\infty}\frac1t\log j(t)>0
\]
and hence the following limit exists
\[
	\lim_{t \to  \infty}\int_{0}^{t} \frac{1}{j^{2}(s)}\,ds 
	= \int_{0}^{\infty} \frac{1}{j^{2}(s)}\,ds \eqdef L
\]	
and we can write
\[
	 z(t) 
	 = j(t) \int_{0}^{t} \frac{1}{j^2(s)}\,ds 
	 = j(t) ( L - \int_t^{\infty} \frac{1}{j^2(s)}\,ds ) 
	 = Lj(t) - j(t) \int_t^{\infty} \frac{1}{j^2(s)}\,ds .
\]	 
It follows that $w\colon[0,\infty)\to\bR$ defined by
\[
	w(t) 
	\eqdef j(t) \int_t^{\infty} \frac{1}{j^{2}(s)}\,ds
\]	 
also satisfies \eqref{eq:Jacobi-2}. It follows from \eqref{eq:hypJa} that for every $\epsilon >0$ there exists $T>0$ such that for every $ t >T$ it holds
\[ 	
	e^{(\lambda - \epsilon)t}  \leq j(t) \leq e^{(\lambda + \epsilon)t}.
\]	
 Let us take $\epsilon < \lambda/4$. This implies that for every $t >T$
\[
 	w(t)
	= j(t) \int_t^{\infty} \frac{1}{j^{2}(s)}\,ds 
	\leq \frac{e^{(\lambda + \epsilon)t}}{(\lambda - \epsilon)e^{2(\lambda - \epsilon)t}} 
	= \frac{e^{(-\lambda + 3\epsilon)t}}{\lambda - \epsilon}.
\]	
 Therefore,
	$$ \lim_{t \to  \infty} \frac{1}{t}\log w(t)  \leq -\lambda + 3 \epsilon , $$
	and since $\epsilon$ can be chosen arbitrarily small we conclude that
	$$\lim_{t \to  \infty} \frac{1}{t}\log w(t)  \leq -\lambda .$$
	A lower bound for this limit
	can be obtained similarly, to get
	$$\lim_{t \to  \infty} \frac{1}{t}\log w(t)  = -\lambda.$$ Taking $W(t)\eqdef w(t)E(t)$ implies Item (1).
	
To show Item (3), assume now that stable and unstable Green subspaces vary continuously. As before, write the given Jacobi field as $J(t)=j(t)E(t)$. In terms of the logarithmic derivative $u\colon[0,\infty)\to\bR$ of $j$ (also using that $j\ne0$) it holds
\[
	\lim_{t \to  \infty} \frac{1}{t}\log j(t) 
	= \lim_{t \to  \infty} \frac{1}{t}\int_{0}^{t}u(s) \,ds,
	\quad\text{ where }\quad
	u(t) = \frac{1}{j(t)}\frac{d}{dt}j(t).
\]	
Considering analogously the logarithmic derivative of $w$,  
\[
	u^\s(t)
	\eqdef \frac{1}{w(t)}\frac{d}{dt}w(t),
\]
it follows
\[
 	2\lambda 
	= \lim_{t \to  \infty} \frac{1}{t}(\log  j(t) - \log w(t) ) 
	= \lim_{t \to  \infty} \frac{1}{t}\int_{0}^{t}(u(s) - u^\s (s))\,ds.
\]	
Hence, there exists a sequence $t_{n} \to  \infty$ such that $u(t_{n}) - u^\s (t_{n}) \geq 2\lambda$ for every $n$. Note that $u(t)$ and $u^\s(t)$ both are solutions of the Riccati equation \eqref{eq:Riccati} for all $t\ge0$. 

If $u$ and $u^\s$ would be defined already for all $t\in\bR$ then by Remark \ref{rem:Ricc} the claim would follow immediately.  As this is not the case, we need to following arguments.
	Consider the geodesics $\beta_{n}(t) \eqdef \gamma_{\theta}(t+ t_{n})$ and the solutions of the Riccati equations of $\beta_{n}$ given by $u_n(t)\eqdef u(t+t_{n})$ and $u^\s_n(t)\eqdef u^\s(t+t_n)$.
	Let $(\phi_{t_{n_{k}}}(\theta))_k$  be a convergent subsequence and denote  its limit by $\eta$.
	
	\begin{claim}
		The Green subspaces $G^\s(\eta)$ and $G^\u(\eta)$ are linearly independent.
	\end{claim}
	
\begin{proof}	
Since by hypothesis Green subspaces vary continuously, the stable solutions $u^\s _{n_{k}}(t) = u^\s (t+ t_{n_{k}})$ of the Riccati equation for $t\mapsto \beta_{n_k}(t)$ converge to the stable solution $u^\s _{\eta}\colon\bR\to \bR$ for $\gamma_\eta$ (recall Remark \ref{rem:Ricc}). 

The sequence of solutions $u_{n_{k}}(t)$  for $ \beta_{n_k}$ has a subsequence converging to some solution of the Riccati equation $\bar{u}(t)$ defined for every $ t \in \mathbb{R}$ by Lemma \ref{lemprop:awayfromvertical}. Indeed, $t\mapsto u_{n_{k}}(t)$ are uniformly bounded for every $t \geq -t_{n_{k}} + 1$ and equicontinuous in this interval since their derivatives are uniformly bounded by the Riccati relation $\ddot u(t) = -u^{2} - K$. Since $u(t_{n}) - u^\s (t_{n}) \geq 2\lambda$ for every $n>0$, the same inequality holds true in the limit, that is, $\bar{u}(0) - u^\s _{\eta}(0) \geq 2\lambda$. 

Hence the geodesic $\gamma_{\eta}$ has two different solutions of the Riccati equation that are defined for every $t \in \mathbb{R}$: the stable solution $u^\s _{\eta}(t)$ and $\bar{u}(t)$.
	We have that $\bar{u}(t) > u^\s _{\eta}(t)$ for every $t\in \mathbb{R}$ by uniqueness of solutions of ordinary differential equations. Therefore, the unstable solution $u^\u _{\eta}(t)$, that is the supremum of the solutions defined for every $ t \in \mathbb{R}$, is strictly greater than $u^\s _{\eta}(t)$. This together with Remark \ref{rem:Ricc} yields the Claim.
	\end{proof}
	
	Finally, notice that $\eta$ is a limit point of the orbit of $\theta$, and Green subspaces at $\eta$ are linearly independent. By continuity of Green bundles,
	there exists an open set which contains $\eta$ where Green subspaces are linearly independent, so the orbit of $\theta$ meets this open set. By invariance
	of Green subspaces, the Green subspaces are linearly independent along the entire orbit of $\theta$. This finishes the proof of Item (3).
\end{proof}

\begin{remark} 
Observe that Item (3) in Proposition \ref{pro:exponents2} is false without assuming the continuity of Green bundles. Indeed, \cite{kn:BBB} provides an example of a compact surface without conjugate points where Green bundles are not continuous and which exhibits a geodesic $\gamma_{\theta}$ where $G^\s(\theta) = G^\u(\theta)$ and the Lyapunov exponent in this (unique) Green subspace is positive. 
\end{remark}

\section{Entropy}\label{sec:entropy}

In this section we assume that $(M,g)$ is a compact surface without conjugate points of genus  greater than one with continuous stable and unstable Green bundles. 

The goal of this section is to show that the entropy of the geodesic flow in any nontrivial strip vanishes and to prove Theorem \ref{main2}. We examine both metric and topological entropies.

A Borel probability measure on a metric space is \emph{invariant} under a continuous flow $\Psi=(\psi_t)_{t\in\bR}$ on $X$ if it is $\psi_t$-invariant for every $t\in\bR$. We say that $Z\subset X$ is \emph{invariant} under the flow if $\psi_t(Z)=Z$ for every $t\in\bR$. An invariant measure is \emph{ergodic} if every invariant set has either measure one or measure zero.

Recall that the \emph{topological entropy} of a compact set $Z\subset T^1M$ (with respect to the time-1 map $\phi_1$) is defined by
\[
	h(\phi_1,Z)
	\eqdef \lim_{\varepsilon\to0}\limsup_{n\to\infty}\frac1n\log M(n,\varepsilon,Z),
\]
where $M(n,\varepsilon,Z)$ denotes the maximal cardinality of any $(n,\varepsilon)$-separated subset $E\subset Z$. A set $E$ is  \emph{$(n,\varepsilon)$-separated} if $x,y\in E$, $x\ne y$, implies $d(\phi_k(x),\phi_k(y))\ge\varepsilon$ for some $k\in\{0,\ldots,n-1\}$. 

For $Z$ compact $\phi_1$-invariant, by the variation principle \cite[Theorem 9.10]{Wal:82} 
\begin{equation}\label{eq:VP}
	h(\phi_1,Z)
	=\sup_\mu h_\mu(\phi_1),
\end{equation}
where the supremum is taken over all $\phi_1$-invariant Borel probability measures $\mu$ supported on $Z$ and where $h_\mu(\phi_1)$ denotes the \emph{metric entropy} of $\mu$ (with respect to the time-1 map $\phi_1$). The \emph{topological entropy} of $Z$ (with respect to the flow $\Phi$)  is analogously defined and denoted  by $h(\Phi,Z)$ (see \cite[Section 3]{BowRue:75}); it satisfies
\[
	h(\Phi,Z)
	= h(\phi_1,Z).
\]
A measure $\mu$ is a \emph{measure of maximal entropy} (with respect to $\Phi$) if its entropy realizes the supremum in \eqref{eq:VP}.
By Ruelle's inequality, it holds
\begin{equation}\label{eq:Ruelle}
	h_{\mu}(\phi_1) 
	\leq \int_{T^1M}\lambda^{+}(\theta)\,d\mu(\theta) ,
\end{equation}
where $\lambda^{+}(\theta)$ is the nonnegative Lyapunov exponent of $\theta$. 

\subsection{The entropy on strips}

For the following compare also \cite[Section 4]{LiuWan:16}.

\begin{lemma} \label{Zero-top-entropy}
	For every $\theta \in T^1M$ it holds $h(\phi_1,{\mathscr F}^\s (\theta) \cap {\mathscr F}^\u (\theta))=0$. In particular, $h(\phi_1,\chi^{-1}(\chi(\theta)))=0$.
\end{lemma}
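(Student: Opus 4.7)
The plan is as follows. Since $\chi^{-1}(\chi(\theta))$ is contained in $\cI(\theta):=\cF^\s(\theta)\cap\cF^\u(\theta)$ by the very definition of $\sim$, and topological entropy is monotone under set inclusion, the second claim will follow from the first. If $\theta\in\cR_0$ then $\cI(\theta)=\{\theta\}$ and the entropy vanishes trivially, so I assume throughout that the strip $\cS(\theta)$ is nontrivial. In that case, by Corollary \ref{nontransversal} combined with Theorem \ref{thmy:manifolds}(1), $\bar\cI(\bar\theta)$ is a compact simple continuous curve contained in the $C^1$-leaf $\tilde{\mathscr F}^\s(\bar\theta)$. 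At each of its points $\bar\eta$, the nontriviality of $\bar\cI(\bar\eta)=\bar\cI(\bar\theta)$ forces $\bar\eta$ to project outside $\cR_1$, so that $\tilde G^\s(\bar\eta)=\tilde G^\u(\bar\eta)$ and this common line is tangent to $\bar\cI(\bar\theta)$; I then parametrize the curve by Sasaki arc length, of total length $L$.

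The key step will be to establish a uniform Sasaki length bound $L^*$ for the curves $\phi_t(\bar\cI(\bar\theta))=\bar\cI(\phi_t\bar\theta)$, valid for every $t\in\bR$. A unit tangent vector $\tau$ to $\bar\cI(\bar\theta)$ at $\bar\eta$ serves as initial data for a Jacobi field $J_\tau$ along $\gamma_{\bar\eta}$ that is simultaneously stable Green and unstable Green; by Remark \ref{rem:Ricc}, the associated Riccati solutions coincide on all of $\bR$, $u^\s_{\bar\eta}\equiv u^\u_{\bar\eta}$, with $|u|\le\kappa$ by Lemma \ref{lemprop:awayfromvertical}(2). Simultaneously, $J_\tau$ realizes an infinitesimal variation of $\gamma_{\bar\eta}$ by bi-asymptotic geodesics lying in the strip $\bar\cS(\bar\theta)$, each of which stays within Hausdorff distance at most $Q$ of $\gamma_{\bar\eta}$ by Lemma \ref{strips}. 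I plan to combine this global geometric restriction with the pointwise Riccati constraint and the estimate \eqref{eq:Lya} giving $\|J_\tau'\|\le\kappa\|J_\tau\|$ to obtain uniform Sasaki bounds on $\|D\phi_t\tau\|$, and then integrate along $\bar\cI(\bar\theta)$ to yield $L^*$.

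Once this is in hand, a standard ordering argument will control separated sets. Given an $(n,\varepsilon)$-separated set $E=\{\eta_0,\ldots,\eta_{m-1}\}\subseteq\cI(\theta)$, I lift to distinct points $\bar\eta_i\in\bar\cI(\bar\theta)$ and order them along the simple curve. For each consecutive pair there is $k(i)\in\{0,\ldots,n-1\}$ with $d_\Sak(\phi_{k(i)}\bar\eta_i,\phi_{k(i)}\bar\eta_{i+1})\ge\varepsilon$. Since $\phi_k$ is an order-preserving diffeomorphism from $\bar\cI(\bar\theta)$ onto $\bar\cI(\phi_k\bar\theta)$ and Sasaki distance is dominated by arc length along the curve, pairs assigned to a fixed $k$ produce disjoint subarcs of $\bar\cI(\phi_k\bar\theta)$ each of length at least $\varepsilon$, so $|\{i:k(i)=k\}|\le L^*/\varepsilon$. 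Summing over $k$ yields $m-1\le nL^*/\varepsilon$, hence $h(\phi_1,\cI(\theta))\le\limsup_n\tfrac{1}{n}\log(nL^*/\varepsilon+1)=0$, and the second claim follows by monotonicity.

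The hard part will be establishing the uniform length bound of the middle paragraph. The Riccati constraint alone only yields the exponential estimate $\|J(t)\|\le\|J(0)\|e^{\kappa|t|}$; to upgrade this to a bound uniform in $t$, one must genuinely exploit the nonlinear geometric fact that variations within the strip displace neighbouring geodesics by at most $Q$ for all time, thereby forcing the linearized displacement (the Jacobi field itself, not merely its logarithmic derivative) to remain uniformly bounded rather than merely exponentially controlled.
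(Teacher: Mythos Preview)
Your overall strategy—bounding $(n,\varepsilon)$-separated subsets of $\cI(\theta)$ linearly in $n$ via a uniform bound on the size of $\cI(\phi_t\theta)$, then ordering points along the arc and pigeonholing consecutive pairs by their separation time—is exactly the paper's approach, and your counting paragraph is in fact a cleaner rendition of it.

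The divergence is in how the uniform length bound is obtained. The paper never touches Jacobi fields here: it works with the intrinsic distance $d^\s_g$ obtained by projecting the horocycle arc to $M$ (equivalently, the $\tilde M$–arc length of $I(\phi_t\bar\theta)$). That projected length is controlled directly by the strip-width constant $Q$ of Lemma~\ref{strips}, and the uniform Lipschitz regularity of the horocycle foliation (Remark~\ref{rem:smoothmfds}, Theorem~\ref{thmy:manifolds}(1)) is then used once to pass from the Sasaki distance $d_\Sak(\phi_k x,\phi_k y)\ge\varepsilon$ to a lower bound $\delta_2$ on $d^\s_g(\phi_k x,\phi_k y)$. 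No Riccati analysis enters.

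Your proposed route via pointwise bounds on $\lVert D\phi_t\tau\rVert$ has a gap at precisely the place you flag as hard. The $Q$-bound constrains the \emph{finite} displacement of geodesics in the strip, not the \emph{infinitesimal} displacement encoded by a single Jacobi field: the inference ``variations stay within $Q$, therefore the linearized displacement $J_\tau(t)$ is uniformly bounded'' is not valid without further argument (and under merely continuous, rather than bounded, asymptote there is no known pointwise bound on stable$=$unstable Green Jacobi fields). What the $Q$-bound \emph{does} control is the integral
\[
\int_0^L \lVert J_{\tau(s)}(t)\rVert\,ds,
\]
since this is nothing but the $\tilde M$–arc length of $I(\phi_t\bar\theta)$; combined with $\lVert J'\rVert\le\kappa\lVert J\rVert$ from~\eqref{eq:Lya} this already gives the Sasaki arc-length bound $L^*$ you need. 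So your plan closes once you replace the pointwise Jacobi estimate by this integrated one—at which point you have reproduced the paper's shortcut, with the Riccati discussion becoming superfluous.
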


\begin{proof}
Note that the result is trivial if ${\mathscr F}^\s (\theta) \cap {\mathscr F}^\u (\theta)=\{\theta\}$.

Let us consider now the general case. Let $\bar\theta$ be any lift of $\theta$.
By Lemma \ref{strips} there exists $Q=Q(M)>0$ such that the width of the strip $ S(\bar \theta)$ is at most $Q$. In particular, the width of $\cI(\theta)\eqdef {\mathscr F}^\s (\theta) \cap {\mathscr F}^\u (\theta)$ is at most $Q$.

Given $\varepsilon>0$ and $n\ge1$, let $E\subset \cI(\theta)$ be an $(n,\varepsilon)$-separated set. For $k\in\{0,\ldots,n-1\}$ denote by $E_k\subset E$ the set of points such that 
$x,y\in E_k$ implies
\begin{equation}\label{eq:smalldist}
 	d_g(\phi_t(x), \phi_t(y)) \geq \epsilon 
	\quad
	\text{ for some }\quad
	t\in[k,k+1).
\end{equation}
Then $E=\bigcup_{k=0}^{n-1}E_k$. 
Let us estimate the cardinality of $E_k$.
By compactness of $(M,g)$, there exists $\delta_1>0$ such that \eqref{eq:smalldist} implies
\[
 	d_g(\phi_t(x), \phi_t(y)) \geq \delta_1 
	\quad
	\text{ for every }\quad
	t\in[k,k+1).	
\]
Denote by $d^\s_g(x_1,x_2)$ the intrinsic distance of two points $x_1,x_2\in{\mathscr F}^\s(\eta)$. To be more precise, consider a curve $\zeta\colon[0,1]\to{\mathscr F}^\s(\eta)$ with $\zeta(0)=x_1$ and $\zeta(1)=x_2$ and let $d^\s_g(x_1,x_2)$ be the length of its canonical projection to $M$. Now recall that the sets ${\mathscr F}^\s (\theta), {\mathscr F}^\u (\theta) $ are smooth with $L$-Lipschitz first derivatives where $L>0$ is uniform in $T^1M$ (Remark \ref{rem:smoothmfds} and Theorem \ref{thmy:manifolds} (3)). Hence, it follows that there exists $\delta_2>0$ such that \eqref{eq:smalldist} implies
\[
 	d^\s_g(\phi_t(x), \phi_t(y)) \geq \delta_2
	\quad
	\text{ for every }\quad
	t\in[k,k+1).	
\]
This together implies that
\[
	\delta_2\card E_k \le Q.
\]

Thus,
\[
	\card E
	\le \sum_{k=0}^{n-1}\card E_k
	\le \sum_{k=0}^{n-1}\delta_2^{-1}Q
	= n\delta_2^{-1}Q.
\]
This immediately implies 
\[
	h(\phi_1,\cI(\theta))
	\le \lim_{\varepsilon\to0}\limsup_{n\to\infty}\frac1n\log(n\delta_2^{-1}Q)
	=0,
\]
proving the lemma.
\end{proof}

Lemma \ref{Zero-top-entropy} and \cite[Theorem 17]{Bow:71} together imply the following result.

\begin{lemma}
	For every compact invariant set $Z\subset T^1M$ it holds $h(\phi_1,Z)=h(\psi_1,\chi(Z))$.
\end{lemma}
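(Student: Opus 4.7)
The plan is to invoke Bowen's inequality for topological entropy under a continuous factor map (Theorem 17 of \cite{Bow:71}), which applies precisely to our semi-conjugacy $\chi\colon T^1M\to X$ satisfying $\chi\circ\phi_t=\psi_t\circ\chi$ (see \eqref{eq:semiconju}). Given a compact $\phi_1$-invariant set $Z\subset T^1M$, the image $\chi(Z)\subset X$ is compact and $\psi_1$-invariant. Restricting $\phi_1$ to $Z$ and $\psi_1$ to $\chi(Z)$, Bowen's result yields
\[
	h(\phi_1,Z)
	\le h(\psi_1,\chi(Z))+\sup_{y\in\chi(Z)} h\bigl(\phi_1,\chi^{-1}(y)\cap Z\bigr).
\]

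Next, I would control the fiber term. Every fiber $\chi^{-1}(y)$ is either a single orbit (if $y$ corresponds to an expansive equivalence class) or contained in a strip of the form $\cF^\s(\theta)\cap\cF^\u(\theta)$ for any $\theta\in\chi^{-1}(y)$ (recall the definition of $\sim$ in Section \ref{sec:quotient}). In both cases the preceding Lemma \ref{Zero-top-entropy} gives $h(\phi_1,\chi^{-1}(y))=0$, and hence $h(\phi_1,\chi^{-1}(y)\cap Z)=0$ as well (entropy is monotone under inclusion of compact invariant subsets, and the bound $h(\phi_1,\chi^{-1}(y))=0$ uses only the bounded-width property of strips, which restricts to any subset). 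Therefore
\[
	h(\phi_1,Z)\le h(\psi_1,\chi(Z)).
\]

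For the reverse inequality I would use the standard fact that topological entropy cannot increase under a continuous factor map: any $(n,\varepsilon)$-separated subset $F\subset \chi(Z)$ (with respect to a metric $d$ on $X$) pulls back, via choosing preimages in $Z$ under $\chi|_Z$, to an $(n,\delta)$-separated subset $E\subset Z$ (with respect to the Sasaki metric on $T^1M$), where $\delta>0$ is determined by uniform continuity of $\chi$ on the compact set $Z$ so that $d_\Sak(\theta,\eta)<\delta$ implies $d(\chi(\theta),\chi(\eta))<\varepsilon$. Counting these separated sets gives $h(\psi_1,\chi(Z))\le h(\phi_1,Z)$.

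Combining the two inequalities yields the desired equality $h(\phi_1,Z)=h(\psi_1,\chi(Z))$. The only substantive ingredient beyond classical entropy arguments is the zero-entropy of the fibers, which was already established in Lemma \ref{Zero-top-entropy}; I do not anticipate a genuine obstacle here, provided Bowen's bound is stated for the restricted factor map $\chi|_Z$ (which is continuous between compact metric spaces, so it applies verbatim).
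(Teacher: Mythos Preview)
Your proposal is correct and follows exactly the paper's approach: the paper's proof consists of the single sentence that Lemma \ref{Zero-top-entropy} together with \cite[Theorem 17]{Bow:71} imply the result, and you have simply unpacked this into the two inequalities (Bowen's fiber bound plus the trivial factor-map inequality). One minor slip: a fiber $\chi^{-1}(y)$ in the expansive case is a single \emph{point}, not a single orbit, but this does not affect the argument.
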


Note that $h$-expansiveness stated in the proof of the following result was shown in \cite{LiuWan:16}, for completeness we provide an independent proof.

\begin{proposition} \label{pro:zero-entropy}
The metric entropy (with respect to $\phi_1$) of any invariant measure supported in the set $T^1M\setminus \cR_1$ is zero and the topological entropy of $T^1M\setminus\cR_1$ (with respect to $\phi_1$) is zero.

Moreover, the entropy map $\mu\mapsto h_\mu(\phi_1)$ is upper semi-continuous.
\end{proposition}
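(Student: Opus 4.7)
The plan handles the three assertions in order. The first two follow directly from Proposition \ref{pro:exponents2}(3) combined with Ruelle's inequality and the variational principle. The third, upper semi-continuity, is the substantive point and I plan to reduce it to $h$-expansiveness of the time-one map $\phi_1$.

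For the metric-entropy statement, I would take any invariant $\mu$ with $\mu(T^1M\setminus\cR_1)=1$. On $T^1M\setminus\cR_1$ the Green bundles coincide, so by the contrapositive of Proposition \ref{pro:exponents2}(3) no orthogonal Jacobi field along $\gamma_\theta$ grows with positive exponential rate; in view of the two-sided bound \eqref{eq:Lya} this forces $\lambda^+(\theta)=0$ for $\mu$-a.e.\ $\theta$, and Ruelle's inequality \eqref{eq:Ruelle} yields $h_\mu(\phi_1)=0$. For the topological-entropy statement, $\cR_1$ is open by Theorem \ref{thmy:manifolds}(3) and invariant by \eqref{eq:invGre}, hence $T^1M\setminus\cR_1$ is compact and invariant, and the variational principle \eqref{eq:VP} combined with the previous step gives $h(\phi_1,T^1M\setminus\cR_1)=0$.

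For upper semi-continuity I plan to prove that $\phi_1$ is $h$-expansive in Bowen's sense, so that Bowen's classical theorem on $h$-expansive homeomorphisms concludes. For $\theta\in T^1M$ and $\varepsilon>0$ set
\[
	\Gamma_\varepsilon(\theta)
	\eqdef\{\eta\in T^1M\colon d_\Sak(\phi_t\eta,\phi_t\theta)\le\varepsilon\text{ for all }t\in\bR\}.
\]
The key geometric claim is that for $\varepsilon$ smaller than half the injectivity radius of $(M,g)$ there exists $\delta=\delta(\varepsilon)\to 0$ as $\varepsilon\to 0$ with
\[
	\Gamma_\varepsilon(\theta)\subset \phi_{[-\delta,\delta]}(\cI(\theta))
	\quad\text{ for every }\theta\in T^1M,
\]
where $\cI(\theta)=\cF^\s(\theta)\cap\cF^\u(\theta)$ as in Section \ref{expansivepoints}. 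To prove it, I would fix a lift $\bar\theta$ of $\theta$ and, for each $\eta\in\Gamma_\varepsilon(\theta)$, use discreteness of the deck-transformation group together with a standard open-closed argument in $t\in\bR$ to produce the unique lift $\bar\eta$ whose entire orbit stays $\varepsilon$-close to that of $\bar\theta$ in $T_1\tilde M$. Then $\gamma_{\bar\theta}$ and $\gamma_{\bar\eta}$ are bi-asymptotic in $\tilde M$, so there is a unique $s\in\bR$ with $\phi_s(\bar\eta)\in\bar\cI(\bar\theta)$, and the estimate $|s|\le\delta(\varepsilon)$ follows from the continuity of the horospheres $H^\pm(\bar\theta)$ provided by Theorem \ref{thmy:manifolds}(1). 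A short adaptation of the separated-set count in the proof of Lemma \ref{Zero-top-entropy}, factoring out the one-parameter flow direction (an extra multiplicative factor of order $\delta/\varepsilon'$), yields $h(\phi_1,\phi_{[-\delta,\delta]}(\cI(\theta)))=0$, hence $h(\phi_1,\Gamma_\varepsilon(\theta))=0$ for every $\theta$, which is precisely $h$-expansiveness.

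The main obstacle will be establishing the inclusion $\Gamma_\varepsilon(\theta)\subset\phi_{[-\delta,\delta]}(\cI(\theta))$ with a modulus $\delta(\varepsilon)\to 0$ that is \emph{uniform in} $\theta$. One must simultaneously control the lift choice using the injectivity radius of $(M,g)$, exploit the uniform upper bound on strip widths (Lemma \ref{strips}) to extract bi-asymptoticity of the lifted orbits from their Sasaki-closeness, and invoke a uniform modulus of continuity for the horospherical foliations --- obtained from compactness of $T^1M$ together with continuity of $\tilde\cF^\s,\tilde\cF^\u$ --- in order to bound the reparametrization time $s$ linearly (or at least vanishingly) in $\varepsilon$.
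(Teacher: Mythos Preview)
Your treatment of the first two assertions is essentially identical to the paper's: vanishing of metric entropy on $T^1M\setminus\cR_1$ via Proposition~\ref{pro:exponents2}(3) and Ruelle's inequality, then vanishing of topological entropy via the variational principle on the closed invariant complement of $\cR_1$.

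For upper semi-continuity you take a genuinely different route. You argue directly that $\phi_1$ is $h$-expansive by proving the geometric inclusion $\Gamma_\varepsilon(\theta)\subset\phi_{[-\delta,\delta]}(\cI(\theta))$ and then invoking Lemma~\ref{Zero-top-entropy}; this is essentially the strategy of Liu--Wang \cite{LiuWan:16}, which the paper itself cites just before the proposition. The paper instead exploits the quotient model: it uses the semi-conjugacy $\chi$, the Ledrappier--Walters relativised variational principle together with Lemma~\ref{Zero-top-entropy} to obtain $h_\mu(\phi_1)=h_{\chi_\ast\mu}(\psi_1)$ for every invariant $\mu$, and then transfers upper semi-continuity from the expansive (hence $h$-expansive) factor $\psi_1$ back to $\phi_1$ via continuity of $\chi_\ast$. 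Your approach is more self-contained and avoids the quotient construction altogether, at the cost of having to establish the uniform modulus $\delta(\varepsilon)$ and the lifting argument you flag as the main obstacle; the paper's approach is shorter given that Theorems~\ref{teo:quotient} and~\ref{teo:quotientflow} are already in hand, and it showcases the utility of the expansive model. Both are correct. One minor caution: the bound $|s|\le\delta(\varepsilon)$ follows most cleanly from the fact that the Busemann functions $b^\pm_{\bar\theta}$ are $1$-Lipschitz (so $|b^+_{\bar\theta}(\gamma_{\bar\eta}(0))|\le\varepsilon$ directly controls the reparametrisation), rather than from Theorem~\ref{thmy:manifolds}(1) as you suggest.
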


\begin{proof}
Let $\mu $ be an invariant measure supported in $T^1M\setminus \cR_1$. 
Since the set of Oseledets regular points of $\mu$ has probability one, it suffices to
evaluate the above integral on the set of Lyapunov regular points, only. Together with \eqref{eq:Ruelle}, it follows immediately from Proposition \ref{pro:exponents2} Item (3), that $\mu$-almost every $\theta$ satisfies $\lambda^+(\theta)=0$. This proves the first claim. 

The second claim is now an immediate consequence of \eqref{eq:VP} applied to the closed invariant set $T^1M\setminus \cR_1$.

By \eqref{eq:semiconju}, the time-1 map $\psi_1\colon X\to X$ is a (topological) factor of the time-1 map $\phi_1\colon T^1M\to T^1M$. To show upper semi-continuity of the entropy map, first recall that by \cite{LedWal:77}, 
\[
	\sup_{\mu\colon\chi_\ast\mu=\nu}h_\mu(\phi_1)
	= h_\nu(\psi_1)+\int h(\phi_1,\chi^{-1}(x))\,d\nu(x).
\]
It follows from Lemma \ref{Zero-top-entropy} and the definition of the factor map $\chi$ that the latter integral is zero. Hence,  for every $\mu\in\cM(\phi_1)$ and $\nu=\chi_\ast\mu$  it holds
\[
	h_\nu(\psi_1)
	= h_\mu(\phi_1).
\]  
Let $(\mu_n)_n\subset\cM(\phi_1)$ be a sequence weak$\ast$ converging to some measure $\mu$. Then by continuity of the factor map and hence of the push forward $\chi_\ast$ it follows that $\nu_n\eqdef\chi_\ast\mu_n$ weak$\ast$ converges to $\nu\eqdef\chi_\ast\mu$.
By Proposition \ref{pro:expansive}, the quotient flow $\Psi$ is expansive. Hence, its time-1 map  $\psi_1$ is $h$-expansive, that is, there exists $\varepsilon>0$ so that for every $x\in X$ the set
\[
	\{y\in X\colon d(\psi_n(y),\psi_n(x))\le\varepsilon\text{ for all }n\in\bZ\}
\]
has zero topological entropy (with respect to $\psi_1$, compare for example \cite[Example 1.6]{Bow:72b}). The latter implies that 
its entropy map is upper semi-continuous and hence $h_\nu(\psi_1) \ge\limsup_nh_{\nu_n}(\psi_1)$. This implies $h_\mu(\phi_1) \ge\limsup_nh_{\mu_n}(\phi_1)$.
\end{proof}

Proposition \ref{pro:zero-entropy} together with \eqref{eq:VP} guarantee the existence of an ergodic measure of maximal entropy $h_\mu(\phi_1)=h(\phi_1,T^1M)$. It remains to show that it is unique. 
First, it follows from Theorem \ref{teo:quotientflow} together with Franco \cite{Fra:77} that there is a unique (hence ergodic) measure of maximal entropy with respect to the quotient flow $\Psi$ (see also \cite[Corollary 6.6]{GelRug:19}). 

\begin{lemma}\label{lem:last}
	The measure of maximal entropy $\nu$ (with respect to $\Psi$) satisfies
$\nu(\{\chi(\theta)\colon[\theta]=\{\theta\}\})=1$.
\end{lemma}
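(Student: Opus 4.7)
The plan is to argue by contradiction, showing that any hypothetical positive $\nu$-mass for $\chi(Z)$, where $Z \eqdef T^1M \setminus \cR_0$, would contradict the positivity of the topological entropy of $\Psi$. The structural fact driving the argument is the identity $\chi^{-1}(\chi(Z)) = Z$: if $\eta \sim \theta$ with $\theta \in Z$, then $\eta \in \cF^\s(\theta)$ with lifts bi-asymptotic to $\theta$, so $\eta$ lies in the same nontrivial strip as $\theta$, forcing $\eta \in Z$. Since $\cR_0$ is open (by continuity of the Green bundles) and $\Phi$-invariant, $Z$ is closed and $\Phi$-invariant, and hence $\chi(Z)$ is a closed $\Psi$-invariant subset of $X$ whose complement is precisely $\{\chi(\theta)\colon [\theta]=\{\theta\}\}$.

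My first step would be to invoke the zero-entropy conclusion of Proposition~\ref{pro:zero-entropy}: since $\cR_1 \subset \cR_0$ gives $Z \subset T^1M \setminus \cR_1$, every $\Phi$-invariant Borel probability measure supported on $Z$ has zero metric entropy with respect to $\phi_1$.

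Now I would suppose for contradiction that $\nu(\chi(Z))>0$. Because $\nu$ is, as recalled immediately before this lemma, the unique and therefore ergodic measure of maximal entropy of the expansive flow $\Psi$, and because $\chi(Z)$ is $\Psi$-invariant, ergodicity forces $\nu(\chi(Z))=1$. I would then produce a $\Phi$-invariant Borel probability measure $\mu$ on $T^1M$ with $\chi_\ast\mu = \nu$ by starting from any Borel lift $\mu_0$ of $\nu$ and taking a weak-$\ast$ accumulation point of the time averages $\frac{1}{T}\int_0^T (\phi_t)_\ast\mu_0\,dt$. The intertwining $\chi\circ\phi_t = \psi_t\circ\chi$ together with $\Psi$-invariance of $\nu$ ensures every such average projects to $\nu$, hence so does $\mu$. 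Since $\chi^{-1}(\chi(Z)) = Z$, this $\mu$ is automatically supported on $Z$, so $h_\mu(\phi_1)=0$ by the previous step.

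To close the argument, since $\Psi$ is a topological factor of $\Phi$ via $\chi$, the standard entropy-of-factor inequality yields $h_\nu(\psi_1)\leq h_\mu(\phi_1)=0$. But $h_\nu(\psi_1)=h(\psi_1,X)=h(\phi_1,T^1M)>0$: the first equality by maximality of $\nu$, the second by the identity of topological entropies established immediately before this lemma, and the final positivity being a classical property of the geodesic flow on a compact surface of genus greater than one. This is the desired contradiction, forcing $\nu(\chi(Z))=0$. The main (though mild) technical obstacle will be the existence of the $\Phi$-invariant lift $\mu$ supported on $Z$, which is resolved by the Krylov--Bogolyubov-style averaging sketched above, invoking that $\chi$ is a continuous factor map between compact metric spaces.
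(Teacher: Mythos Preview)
Your argument is correct and follows the same overall strategy as the paper: both use ergodicity of $\nu$ together with the zero-entropy information from Proposition~\ref{pro:zero-entropy} to rule out $\nu$ giving mass to the image of the ``bad'' set. The differences are minor but worth noting. The paper works with $\cR_1$ rather than $\cR_0$ (either choice works, since $\chi^{-1}(\chi(T^1M\setminus\cR_1))=T^1M\setminus\cR_1$ follows from $\cR_1\subset\cR_0$), and it avoids constructing a lift explicitly: having already established in the proof of Proposition~\ref{pro:zero-entropy} the Ledrappier--Walters identity $h_\mu(\phi_1)=h_{\chi_\ast\mu}(\psi_1)$ for every $\mu$, the contradiction is immediate once one knows that any invariant measure on $T^1M\setminus\cR_1$ has zero entropy. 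Your route, by contrast, is more self-contained: you build an invariant lift via Krylov--Bogolyubov averaging and appeal only to the elementary factor inequality $h_\nu(\psi_1)\le h_\mu(\phi_1)$. Both approaches are short; the paper's is terser because it recycles the entropy identity, while yours requires no such identity at the cost of spelling out the lift.
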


\begin{proof}
	By definition, $\{\chi(\theta)\colon[\theta]=\{\theta\}\}=\chi(\cR_1)$. By Theorem \ref{thmy:manifolds}, $\cR_1$ and its complement $T^1M\setminus\cR_1$ both are invariant under the geodesic flow. Hence, as $\Psi$ is a factor, it follows that $\chi(\cR_1)$ and its complement are both invariant under the quotient flow $\Psi$. 
By ergodicity, only one of these sets has full measure $\nu$. The claim now follows from Proposition \ref{pro:zero-entropy}.
\end{proof}

\begin{proof}[Proof of Theorem \ref{main2}]
The claim follows from \cite[Theorem 6.7]{GelRug:19} (which is \cite[Theorem 1.5]{BuzFisSamVas:12} in our setting), together with Lemmas \ref{Zero-top-entropy} and \ref{lem:last}. 
\end{proof}

\bibliographystyle{plain}

\end{document}